\documentclass[12pt]{amsart}
\usepackage[colorlinks=true,pagebackref,hyperindex,citecolor=green,linkcolor=red]{hyperref}
\usepackage{amsmath}
\usepackage{verbatim}
\usepackage{amsfonts}
\usepackage{amssymb}
\usepackage{color}
\usepackage[all]{xy}  
\usepackage{enumerate}
\usepackage[top=1in, bottom=1in, left=0.9in, right=0.9in]{geometry}
\usepackage{mathrsfs}

\usepackage{stmaryrd}
\usepackage{bbold}
\renewcommand{\k}{\mathbb{k}}
\usepackage{cleveref}
\usepackage{soul}

\theoremstyle{definition}
\newtheorem{thm}{Theorem}[section]
\newtheorem{theoremx}{Theorem}

\numberwithin{equation}{section}

\newtheorem{corollary}[thm]{Corollary}
\newtheorem{lemma}[thm]{Lemma}
\newtheorem{proposition}[thm]{Proposition}

\newtheorem{notation}[thm]{Notation}

\theoremstyle{definition}
\newtheorem{definition}[thm]{Definition}

\newtheorem{example}[thm]{Example}

\newtheorem{remark}[thm]{Remark}

\newtheoremstyle{TheoremNum}
        {8pt}{8pt}              
        {\upshape}                      
        {}                              
        {\bfseries}                     
        {.}                             
        {.5em}                             
        {\thmname{#1}\thmnote{ \bfseries #3}}
  \theoremstyle{TheoremNum}



\newcommand{\m}{\mathfrak{m}}
\newcommand{\n}{\mathfrak{n}}


\newcommand{\NN}{\mathbb{N}}
\newcommand{\ZZ}{\mathbb{Z}}

\newcommand{\FF}{\mathbb{F}}

\newcommand{\PP}{\mathbb{P}}
\newcommand{\lr}[1]{\langle {#1} \rangle}

\newcommand{\cC}{\mathscr{C}}
\newcommand{\cV}{\mathcal{V}}

\newcommand{\cF}{\mathcal{F}}

\newcommand{\cP}{\mathcal{P}}

\newcommand{\lrphi}[1]{\lr{\varphi_{#1}}}
\newcommand{\nCM}{\operatorname{nCM}}

\newcommand{\sdim}{\operatorname{sdim}}

\newcommand{\Spec}{\operatorname{Spec}}
\newcommand{\Depth}{\operatorname{depth}}
\newcommand{\Hom}{\operatorname{Hom}}
\newcommand{\End}{\operatorname{End}}
\newcommand{\Ext}{\operatorname{Ext}}

\newcommand{\M}{\mathcal{M}}
\newcommand{\N}{\mathcal{N}}

\newcommand{\InjDim}{\operatorname{inj.dim}}
\newcommand{\Supp}{\operatorname{Supp}}

\newcommand{\Ann}{\operatorname{Ann}}
\newcommand{\Ass}{\operatorname{Ass}}	
	
\newcommand{\depth}{\operatorname{depth}}

\newcommand{\Proj}{\operatorname{Proj}}
	
\newcommand{\Ht}{\operatorname{ht}}





\newcommand{\ls}{\leqslant}%
\newcommand{\gs}{\geqslant}
\renewcommand{\leq}{\leqslant}
\renewcommand{\geq}{\geqslant}
\newcommand{\ds}{\displaystyle}

\newcommand{\p}{\mathfrak{p}}
\newcommand{\q}{\mathfrak{q}}

\newcommand{\ann}{\operatorname{ann}}
\newcommand{\ck}[1]{{#1}^{\vee}}

\renewcommand{\a}{\mathfrak{a}}
\newcommand{\OO}{\mathcal{O}}

\renewcommand{\H}{{\rm H}}


\renewcommand{\mod}{\operatorname{mod}}

\title{Local cohomology and Lyubeznik numbers of $F$-pure rings}

\author[De Stefani]{Alessandro De Stefani}
\address{Department of Mathematics, University of Nebraska, Lincoln, NE 68588-0130, USA}
\email{adestefani2@unl.edu}

\author[Grifo]{Elo\'isa Grifo}
\address{Department of Mathematics, University of Virginia, Charlottesville, VA 22904-4135, USA}
\email{eloisa.grifo@virginia.edu}

\author[N\'u\~nez-Betancourt]{Luis N\'u\~nez-Betancourt${^*}$}
\address{Centro de Investigaci\'on en Matem\'aticas, Guanajuato, GTO, Mexico.}
\email{luisnub@cimat.mx}

\thanks{{$^*$}The third author was partially supported by the NSF Grant 1502282}

\subjclass[2010]{Primary 13A35; Secondary 13D45, 14B05.}
\keywords{$F$-pure rings, Lyubeznik numbers, local cohomology, compatible ideals.}

\dedicatory{Dedicated to Professor~Craig~Huneke on the occasion of his birthday.}

\begin{document}
\maketitle

\begin{abstract}
In this article, we study certain local cohomology modules over $F$-pure rings. We give sufficient conditions for the vanishing of some Lyubeznik numbers, derive a formula for computing these invariants when the $F$-pure ring is standard graded and, by its means, we provide some new examples of Lyubeznik tables. We study associated primes of certain $\Ext$-modules, showing that they are all compatible ideals. Finally, we focus on properties that Lyubeznik numbers detect over a globally $F$-split projective variety.
\end{abstract}

\section{Introduction}
Local cohomology modules play an important role in the study of algebraic and geometric properties of rings. These modules are usually not finitely generated, and difficult to manage. For an equicharacteristic regular local ring, $(S,\m,\k)$, and an ideal $I \subseteq S$, the local cohomology modules $\H^i_I(S)$ have a finite set of associated primes and finite Bass numbers \cite{HunekeSharp,LyuDMod}.  These results are derived from the structure of $\H^i_I(S)$  as a $D$-module in characteristic zero, and as an $F$-module in positive characteristic. In this manuscript, we focus on the case where  $\k$ has prime characteristic $p>0$, $[\k^{1/p}:\k]<\infty$, and $R=S/I$ is an $F$-pure ring.

We say that $R$ is $F$-pure if the the Frobenius map splits. The singularities of an $F$-pure ring can be quite severe. For instance, $F$-pure rings need not be normal or Cohen-Macaulay. However, the local cohomology modules $\H^i_\m(R)$ enjoy desirable finiteness properties \cite{MaRFmod}. 
If $R$ is written as a quotient of an $n$-dimensional regular local ring $S$ by an ideal $I$, these good properties are reflected in the modules $\H^{n-i}_I(S)$, by Matlis duality \cite{LyuFmod}. In this paper, we further this study by showing that the associated primes of $\H^{n-i}_I(S)$ behave well with respect to the Cartier algebra $\cC(R) = \bigoplus_e \cC^e(R)$. See Section \Cref{SecAssPrimes} for the definition of $\cC(R)$.

\begin{theoremx}[\Cref{compatibility filtration}]\label{MainAssPrimes}
Let $(S,\n,\k)$ be an $F$-finite regular local ring. Let $I \subseteq S$ be an ideal such that $R=S/I$ is $F$-pure. If $\p \in\Ass_S \H^{n-i}_I(S)$, then $\phi(\p R)\subseteq \p R$ for every $e \in \NN$, and every $\phi \in \cC^e(R)$.
\end{theoremx}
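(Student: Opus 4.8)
The plan is to relate the associated primes of the local cohomology module $\H^{n-i}_I(S)$ — equivalently, via Matlis duality, the associated primes of $\H^i_\m(R)$ — to the action of the Cartier algebra on $R$. The key structural fact is that $\H^{n-i}_I(S)$ carries a natural Frobenius action coming from the $F$-pure structure of $R=S/I$: the splitting $\phi\colon F_*R\to R$ (an element of $\cC^1(R)$) induces, by functoriality of local cohomology, a map on $\H^i_\m(R)$, and dually a map on $\H^{n-i}_I(S)$. More generally each $\phi\in\cC^e(R)$ acts. First I would make precise the dictionary: a prime $\p\supseteq I$ lies in $\Ass_S\H^{n-i}_I(S)$ if and only if the corresponding maximal ideal of $R_\p/\p R_\p$… — more usefully, $\p\in\Ass_S\H^{n-i}_I(S)$ iff $\H^{n-i}_{IR_\p}(S_\p)$ has depth zero, iff $\H^{i}_{\p R_\p}(R_\p)\neq 0$ after passing to the completion (here one should localize and then complete, using that $R$ is excellent and that $F$-purity localizes).

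**Next,** the heart of the argument is a localization/specialization step. Fix $\p\in\Ass_S\H^{n-i}_I(S)$ and $\phi\in\cC^e(R)$; I want to show $\phi(\p R)\subseteq\p R$. The strategy is to argue by contradiction: if $\phi(\p R)\not\subseteq\p R$, then after localizing at $\p$ the restriction of $\phi$ (or a suitable $\cC^e(R_\p)$-twist of it, using that Cartier operators localize) is a Cartier operator on $R_\p$ that does not preserve the maximal ideal, hence — combined with the $F$-purity of $R_\p$ — forces $R_\p/\p R_\p$ together with the induced action to have the property that $\H^{i}_{\p R_\p}(R_\p)$ is annihilated by a power of an element outside $\p$, i.e.\ that $\p$ is not associated. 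The clean way to package this is via the result of Ma (cited as \cite{MaRFmod}) that local cohomology of an $F$-pure local ring has finitely many associated primes together with the observation that $\Ass_S\H^{n-i}_I(S)$ is closed under the "taking image under a Cartier operator" operation precisely because the Frobenius action on $\H^i_\m(R)$ is by definition compatible with $\cC(R)$; one then invokes the characterization (as in Enescu--Hochster / Sharp and subsequent work) that the associated primes of a cofinite $F$-module are exactly the compatible ideals, i.e.\ the $\cC(R)$-submodules' radicals.

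**Concretely, in steps:** (1) record that for $\p\supseteq I$, $F$-purity of $R$ implies $F$-purity of $R_\p$, and each $\phi\in\cC^e(R)$ induces $\phi_\p\in\cC^e(R_\p)$; (2) show $\Ass_S\H^{n-i}_I(S)$ depends only on the $F$-module (equivalently, the $\cC(R)$-module $\H^i_\m(R)$) structure, via Matlis duality and Lyubeznik's/Ma's theory — this gives a bijection between $\Ass$ and a finite set of "special" primes of $R$; (3) prove the key claim that these special primes are $\cC(R)$-compatible: an associated prime $\p$ of the $F$-module $M:=\H^i_\m(R)^{\vee}$ satisfies $\phi(\p M)\subseteq \p M$-type compatibility because $\p=\Ann$ of a nonzero submodule that is itself an $F$-submodule (its $\phi$-image is contained in it by Frobenius-stability of the socle / of $\Ass$ under the Frobenius action), and translate this back to $\phi(\p R)\subseteq\p R$; (4) conclude for all $e$ by iterating, since $\cC(R)$ is generated over $\cC^0(R)=R$ by its graded pieces and compatibility is multiplicative.

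**The main obstacle** I expect is step (3): carefully moving between the Frobenius/$F$-module action on $\H^i_\m(R)$ (or its Matlis dual $\H^{n-i}_I(S)$) and the Cartier-algebra action on $R$ itself, and checking that "$\p$ is associated to the $F$-module" forces the set-theoretic compatibility $\phi(\p R)\subseteq\p R$ for \emph{every} $\phi\in\cC^e(R)$, not just the splitting. The subtlety is that a priori an associated prime only gives an $F$-stable submodule of $\H^i_\m(R)$, and one must extract from the $F$-stability of that submodule — via a duality/annihilator computation and the fact that $R\hookrightarrow F^e_*R$ splits — the statement that the ideal $\p$ is sent into itself by all Cartier operators of level $e$. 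I would handle this by reducing to the complete local case (where Matlis duality is an exact anti-equivalence), identifying $\p R$ with the annihilator of a $\cC(R)$-stable submodule, and then using that $\phi(\Ann_R N)\subseteq \Ann_R N$ whenever $N$ is $\phi$-stable and $\phi$ is $R$-$\sigma^e$-linear, which is a direct semilinearity computation.
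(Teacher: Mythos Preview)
Your outline has the right architecture --- localize at $\p$, use Matlis duality, and argue by contradiction --- and you correctly identify step (3) as the crux. But the proposed resolution of that step does not go through, and the missing idea is precisely what the paper supplies.

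The gap is this. An associated prime $\p$ of $\H^{n-i}_I(S)$ is the annihilator of a factor in a filtration of $\Ext^{n-i}_S(R,S)$ that is stable under the \emph{canonical} Frobenius structure (the one coming from $R\hookrightarrow F_*R$). You want compatibility of $\p$ under \emph{every} $\phi\in\cC^e(R)$. These are different actions: each $\phi\colon F^e_*R\to R$ induces its own Cartier structure $\widetilde\phi\colon F^e_*\H^i_\m(R)\to\H^i_\m(R)$, and there is no reason the filtration factor $N$ singled out by the canonical Frobenius should be $\widetilde\phi$-stable for an arbitrary $\phi$. Your proposed ``direct semilinearity computation'' that $\phi(\Ann_R N)\subseteq\Ann_R N$ whenever $N$ is $\phi$-stable presupposes exactly this stability, which you have not established; and the Enescu--Hochster/Sharp results you cite give finiteness of $F$-stable primes for a \emph{fixed} Frobenius action, not compatibility with all of $\cC(R)$.

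The paper closes this gap via the splitting prime. The key mechanism is: for each $f\notin\cP(R)$, the \emph{twisted} action $fF^e$ on $R$ still splits for $e\gg0$, so Ma's anti-nilpotency result applies to this twisted action and the same filtration factors remain injective under it. Dualizing, multiplication by $f$ on the factor $N=M_j/M_{j-1}$ participates in a surjection, forcing $f\notin\Ann_R N$ and hence $f\notin\p$. This shows $\p\subseteq\cP(R)$. Now localize at $\p$: one gets $\p R_\p=\cP(R_\p)$, and the splitting prime is \emph{by construction} compatible with every Cartier operator. Compatibility then pulls back to $R$ because Cartier operators localize. Thus the quantifier over all $\phi\in\cC^e(R)$ is handled not by finding one submodule stable under all of them, but by running the anti-nilpotency argument once for each twisted Frobenius $fF^e$, and packaging the conclusion as $\p\subseteq\cP(R)$.
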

\Cref{compatibility filtration} is actually more general, as it deals with the primes associated to the simple $F$-module factors of $\H^{n-i}_I(S)$, which correspond to the simple $R\langle F\rangle$-module factors of $\H^i_\m(R)$ (see \Cref{SecAssPrimes} for definitions and details).

Since the introduction of the Lyubeznik numbers of a ring $R$ \cite{LyuDMod}, many interactions with the geometry and topology of $\Spec(R)$ have been found. For instance, the Lyubeznik numbers detect if the punctured spectrum of $R$ is connected \cite{Walther2}, and highly connected \cite{W}. These numbers also relate to singular cohomology \cite{GarciaSabbah}, and \'etale cohomology \cite{B-B}. 
Furthermore, the Lyubeznik numbers of a Stanley-Reisner ring associated to a simplicial complex are topological invariants of its geometric realization \cite{AMYLyuNum}. 
We refer to \cite{SurveyLyuNum} for a survey of these topics. The Lyubeznik  number of $R$ with respect to $i,j$ is defined as 
$$
\lambda_{i,j}(R)=\dim_\k \Ext^{i}_S(\k,\H^{n-j}_I(S)).
$$
It turns out that these numbers are finite \cite{HunekeSharp,LyuDMod}, and do not depend on the chosen representation $R \cong S/I$ \cite{LyuDMod}.
As a consequence of the techniques involved in the proof of \Cref{MainAssPrimes}, we give a sufficient condition for the vanishing of certain Lyubeznik numbers.

\begin{theoremx}[see Theorem \ref{first main result}]\label{MainVanishing}
Let $(S,\n,\k)$ be a complete $F$-finite local ring.
Let $I \subseteq R$ be an ideal such that $R=S/I$ is an $F$-pure ring, and let $\sdim(R)$ denote the splitting  dimension of $R$.
If either $i<\sdim(R)$ or $j<\sdim(R)+1$, then $\lambda_{i,j}(R)=0$.
\end{theoremx}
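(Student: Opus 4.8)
The plan is to establish the two vanishing ranges separately, in both cases feeding the dimension information supplied by \Cref{MainAssPrimes} into the structure of $\H^{n-j}_I(S)$. After passing to a Cohen presentation we may assume $S$ is complete regular local of dimension $n$, and we write $\mathcal{M}=\H^{n-j}_I(S)$; recall that $\lambda_{i,j}(R)=\dim_\k\Ext^i_S(\k,\mathcal{M})$ does not depend on the presentation.

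For the estimate in $j$, I would first prove that $\mathcal{M}=0$ whenever $j<\sdim(R)$, which kills every $\lambda_{i,j}(R)$ in that range. If $\mathcal{M}\neq0$, pick $\p\in\Ass_S\mathcal{M}$. On the one hand, localizing gives $\mathcal{M}_\p=\H^{n-j}_{IS_\p}(S_\p)\neq0$, so $n-j\leq\cd(IS_\p,S_\p)\leq\dim S_\p=\Ht\p$, that is, $\dim S/\p\leq j$. On the other hand, by the general form of \Cref{MainAssPrimes} the ideal $\p R=\p/I$ is $\cC(R)$-compatible; it is proper, hence contained in the splitting prime $\mathcal{P}(R)$ (the largest proper $\cC(R)$-compatible ideal of $R$), and therefore $\dim S/\p=\dim R/\p R\geq\dim R/\mathcal{P}(R)=\sdim(R)$. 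Combining the two gives $\sdim(R)\leq j$, a contradiction when $j<\sdim(R)$. (One can reach the same conclusion from $\depth R\geq\sdim(R)$ for $F$-pure local rings together with the standard vanishing $\lambda_{i,j}(R)=0$ for $j<\depth R$; the localization argument is preferable because it also controls the borderline case below.)

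For the estimate in $i$, I would use that $\mathcal{M}$ has finite length in the category of $F$-finite $F$-modules and so admits a filtration $0=\mathcal{M}_0\subsetneq\cdots\subsetneq\mathcal{M}_r=\mathcal{M}$ by $F$-submodules with simple $F$-module quotients $\mathcal{N}_t=\mathcal{M}_t/\mathcal{M}_{t-1}$. The long exact sequences for $\Ext^\bullet_S(\k,-)$ yield $\lambda_{i,j}(R)\leq\sum_t\dim_\k\Ext^i_S(\k,\mathcal{N}_t)$, so it is enough to control the Bass numbers of each simple factor. Each $\mathcal{N}_t$ has a unique associated prime $\p_t$, and these are precisely the primes to which the general form of \Cref{MainAssPrimes} applies, so $\p_tR$ is proper and $\cC(R)$-compatible and hence $\dim S/\p_t\geq\sdim(R)$ as before. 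The argument then closes if one knows that a simple $F$-finite $F$-module $\mathcal{N}$ with associated prime $\p$ has $\Ext^i_S(\k,\mathcal{N})=0$ for $i<\dim S/\p$: this gives $\Ext^i_S(\k,\mathcal{N}_t)=0$ for $i<\sdim(R)$, and summing over $t$ gives $\lambda_{i,j}(R)=0$ for $i<\sdim(R)$. Finally, combining both estimates with the standard vanishing $\lambda_{i,j}(R)=0$ for $i>j$, and analyzing the borderline $j=\sdim(R)$ — where the inequalities of the previous paragraph force $\Ht\p_t=n-\sdim(R)$ exactly, so $R/\p_tR$ is a domain of dimension $\sdim(R)$ and thus $\p_tR=\mathcal{P}(R)$ — should yield the stated range $i<\sdim(R)$ or $j<\sdim(R)+1$.

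The one genuinely nontrivial ingredient, and the main obstacle, is the Bass-number statement for simple $F$-modules: that $\Ext^i_S(\k,\mathcal{N})=0$ for $i<\dim S/\p$ when $\mathcal{N}$ is simple with associated prime $\p$. (Together with the complementary vanishing $\Ext^i_S(\k,\mathcal{N})=0$ for $i>\dim S/\p$, which follows from the bound on the injective dimension of an $F$-finite $F$-module by the dimension of its support, this says the Bass numbers of a simple $F$-module are concentrated in the single degree $\dim S/\p$.) Proving this requires $F$-module theory rather than bookkeeping: one expects to use the classification of simple $F$-finite $F$-modules to reduce to a local cohomology module attached to a normal domain finite over $S/\p$, where the Bass numbers can be read off, exploiting the ``purity'' of such a module. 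Note that $F$-purity of $R$ is used only through \Cref{MainAssPrimes}, whose content here is exactly that the associated primes $\p_t$ of the simple factors — equivalently, the supports of those factors — are large enough ($\dim S/\p_t\geq\sdim(R)$) for the concentration estimate to produce the asserted vanishing.
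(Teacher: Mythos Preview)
Your approach is genuinely different from the paper's, and it has two real gaps.

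For the $i$-estimate, the Bass-number concentration you need --- that a simple $F$-finite $F$-module $\mathcal{N}$ with associated prime $\p$ has $\Ext^i_S(\k,\mathcal{N})=0$ for $i<\dim S/\p$ --- is, as you admit, not something you can cite off the shelf, and the sketch you offer (reduce via the classification of simple $F$-modules to a local cohomology module over a normal domain) is itself a substantial undertaking. The paper sidesteps $F$-module filtrations altogether: it works with the root $\Ext^{n-j}_S(R,S)$ and shows by a direct splitting argument (Proposition~\ref{vanishing} applied to the functor $G=\Ext^{n-j}_S(-,S)$) that the annihilator of each nonzero $\Ext^{n-\ell}_S\big(\Ext^{n-j}_S(R,S),S\big)$ is contained in $\cP(R)$. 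A depth formula over the regular ring $S$ (Lemma~\ref{depth formula}) then gives $\Depth\Ext^{n-j}_S(R,S)\geq\sdim(R)$, and the Huneke--Sharp inequality $\mu^i\big(\n,\H^{n-j}_I(S)\big)\leq\mu^i\big(\n,\Ext^{n-j}_S(R,S)\big)$ transfers this to $\H^{n-j}_I(S)$ (Corollary~\ref{CorollVanishing}). No structure theory of simple $F$-modules is invoked, and in fact the paper proves the vanishing theorem \emph{before} and independently of Theorem~\ref{compatibility filtration}, rather than deducing one from the other as you propose.

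For the $j$-estimate, your borderline analysis at $j=\sdim(R)$ does not close. Even granting your concentration statement, knowing $\p_tR=\cP(R)$ only pins the Bass numbers of each $\mathcal{N}_t$ to degree exactly $\sdim(R)$, so $\Ext^{\sdim(R)}_S(\k,\mathcal{N}_t)$ --- and with it $\lambda_{\sdim(R),\sdim(R)}(R)$ --- could still be nonzero. The paper instead proves the stronger fact that $\H^{n-j}_I(S)=0$ outright for $j\leq\sdim(R)$: Proposition~\ref{vanishing} with $G=\id$ yields $\Depth R=\sdim(R)+\Depth R_{\cP(R)}$, and since $F$-pure rings are reduced and hence satisfy $(S_1)$, one gets $\Depth R_{\cP(R)}\geq 1$ (or else $\cP(R)=0$ and $R$ is strongly $F$-regular, hence Cohen--Macaulay). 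Either way $\Depth R\geq\sdim(R)+1$, so $\H^j_\m(R)=0$ and then, by $F$-purity, $\H^{n-j}_I(S)=0$ for all $j\leq\sdim(R)$. Your parenthetical ``$\depth R\geq\sdim(R)$'' was one short; the $+1$ contributed by $(S_1)$ at $\cP(R)$ is precisely the ingredient your localization argument lacks, and your claim that the localization argument ``also controls the borderline case'' has it backwards.
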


Unfortunately, the Lyubeznik numbers are difficult to compute. 
Even finding out whether or not $\lambda_{i,j}(R)$ vanishes is a difficult computational problem (see \cite{KatzmanZhang} for progress in this direction).
Formulas for Lyubeznik numbers have been obtained previously for rings of small dimension \cite{Walther}, and Stanley-Reisner rings \cite{YStr,Montaner,AM-Proc,AMV}. We give a formula for the Lyubeznik numbers of standard graded $F$-pure rings. This recovers and extends, in positive characteristic, the formula for Stanley-Reisner rings \cite[Theorem 1.1]{YStr}. 

\begin{theoremx}[see Theorem \ref{lyu formula}]\label{MainFormula}
Let $\k$ be a field of characteristic $p>0$, and $S=\k\left[ x_1, \ldots, x_n \right]$, with $\deg(x_i)=1$. Let $I$ be a homogeneous ideal in $S$ be such that $R = S/I$ is $F$-pure. Then
$$\lambda_{i,j}(R) = \dim_\k \left( \Ext^{n-i}_S \left(  \Ext^{n-j}_S \left( R, S \right), S \right) \right)_0,$$
where the last subscript denotes the degree zero part.
\end{theoremx}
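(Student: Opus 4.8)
The plan is to identify both sides with the dimension of a single graded component of a local cohomology module, using the $F$-module structure of $\H^{n-j}_I(S)$ to determine which component.

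\emph{Step 1: reduction to a graded comparison.} Write $\m=(x_1,\dots,x_n)$. Since $\k$ is finitely presented over the regular ring $S$, its Koszul resolution shows that $\Ext^i_S(\k,-)$ commutes with the flat localization $S\to S_\m$ and that $\Ext^i_S(\k,M)$ is a graded $\k$-vector space; hence $\dim_\k\Ext^i_{S_\m}(\k,M_\m)=\sum_{t\in\ZZ}\dim_\k\Ext^i_S(\k,M)_t$ for any graded $M$, a finite sum when the left side is finite. Taking $M=\H^{n-j}_I(S)$ yields $\lambda_{i,j}(R)=\sum_t\dim_\k\Ext^i_S(\k,\H^{n-j}_I(S))_t$. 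On the other side, with $N:=\Ext^{n-j}_S(R,S)$ finitely generated and $\omega_S=S(-n)$, graded local duality gives $\Ext^{n-i}_S(N,S)\cong\Ext^{n-i}_S(N,\omega_S)(n)\cong\big(\H^i_\m(N)\big)^{\vee}(n)$, so $\dim_\k\big(\Ext^{n-i}_S(\Ext^{n-j}_S(R,S),S)\big)_0=\dim_\k\H^i_\m(N)_{-n}$. Thus it suffices to show
$$\sum_{t\in\ZZ}\dim_\k\Ext^i_S\!\big(\k,\H^{n-j}_I(S)\big)_t\;=\;\dim_\k\H^i_\m\!\big(\Ext^{n-j}_S(R,S)\big)_{-n}.$$

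\emph{Step 2: the left side lives in a single degree.} I would invoke the description $\H^{n-j}_I(S)=\varinjlim_e\Ext^{n-j}_S(S/I^{[p^e]},S)$. As $I$ is homogeneous, the surjections $S/I^{[p^{e+1}]}\onto S/I^{[p^e]}$ are homogeneous of degree $0$; and since $S$ is regular, the graded Frobenius functor $F^{*}_S$ (multiplying degree shifts by $p$) is exact and commutes with $\Ext_S(-,S)$, so $\Ext^{n-j}_S(S/I^{[p^e]},S)\cong F^{*e}_S(N)$ and the generating morphism $\beta\colon N\to F^{*}_S(N)$ is homogeneous of degree $0$. Therefore $\H^i_\m(\H^{n-j}_I(S))$ is a graded $\m$-torsion $F_S$-module, hence an injective $S$-module; being graded and $\m$-torsion it is a finite direct sum $\bigoplus_{\ell=1}^{m}E_S(\k)(b_\ell)$ of shifts of the graded injective hull (finiteness by finiteness of the Bass numbers). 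The structure isomorphism $\H^i_\m(\H^{n-j}_I(S))\cong F^{*}_S\big(\H^i_\m(\H^{n-j}_I(S))\big)$ is homogeneous of degree $0$, and $F^{*}_S(E_S(\k)(b))\cong E_S(\k)(pb-n(p-1))$ because $F^{*}_S$ scales shifts by $p$ and fixes $\H^n_\m(S)\cong E_S(\k)(n)$; hence the finite multiset $\{b_\ell\}$ is stable under the expanding affine map $b\mapsto pb-n(p-1)$, whose only fixed point is $b=n$, forcing every $b_\ell=n$. Thus $\H^i_\m(\H^{n-j}_I(S))\cong\H^n_\m(S)^{\oplus m}$. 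Since each $\H^q_\m(\H^{n-j}_I(S))$ is injective, the spectral sequence $\Ext^p_S(\k,\H^q_\m(M))\Rightarrow\Ext^{p+q}_S(\k,M)$ degenerates and $\Ext^i_S(\k,\H^{n-j}_I(S))\cong\Hom_S(\k,\H^n_\m(S)^{\oplus m})$ is concentrated in degree $-n$, so the left side of Step 1 equals $\dim_\k\H^i_\m(\H^{n-j}_I(S))_{-n}=m$.

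\emph{Step 3: where $F$-purity is used.} It remains to prove $\dim_\k\H^i_\m(\Ext^{n-j}_S(R,S))_{-n}=\dim_\k\H^i_\m(\H^{n-j}_I(S))_{-n}$, i.e.\ that passing to the Frobenius colimit $\H^{n-j}_I(S)=\varinjlim_e F^{*e}_S(N)$ does not change the degree-$(-n)$ piece of $\H^i_\m(-)$. Here $F$-purity enters: because $R\to F_*R$ splits, so does $\H^j_\m(R)\to F_*\H^j_\m(R)$, and since moreover $\H^j_\m(R)$ is anti-nilpotent (Ma's theorem for $F$-pure local rings), the generating morphism $\beta$ is injective and the transition maps of the direct system $\H^i_\m(\H^{n-j}_I(S))=\varinjlim_e F^{*e}_S(\H^i_\m(N))$ are surjective in degree $-n$. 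Injectivity of $\beta$ forces the canonical map $\H^i_\m(N)\to\H^i_\m(\H^{n-j}_I(S))$ to be injective; surjectivity of the transition maps forces it to be surjective in degree $-n$; hence it is an isomorphism in degree $-n$. Combining with Steps 1--2, $\lambda_{i,j}(R)=\dim_\k\H^i_\m(\Ext^{n-j}_S(R,S))_{-n}=\dim_\k\big(\Ext^{n-i}_S(\Ext^{n-j}_S(R,S),S)\big)_0$.

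\emph{Main obstacle.} The heart of the matter is Step 3: extracting from $F$-purity the precise statement that the Frobenius colimit already achieves the degree-$(-n)$ part of $\H^i_\m$ at the initial term, with the grading tracked correctly. When $R$ is Cohen--Macaulay this is automatic — then $\H^j_\m(R)$, hence $\H^{n-j}_I(S)$, is nonzero only for $j=\dim R$, $\beta$ is injective for grade reasons, and it induces a bijection on $\H^i_\m$ in degree $-n$ — but for non-Cohen--Macaulay $F$-pure rings one must genuinely use the anti-nilpotence of the lower modules $\H^j_\m(R)$. Everything else — the base change, the two applications of graded local duality, and the fixed-point argument pinning the shifts to $n$ — is routine.
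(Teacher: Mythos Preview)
Your approach differs substantially from the paper's. The paper's proof is short and rests on a theorem of Zhang (Theorem~\ref{ThmWZ}), which already identifies $\lambda_{i,j}(R)$ with $\dim_\k\bigcap_e F^e(V)$ for $V=\big(\Ext^{n-i}_S(\Ext^{n-j}_S(R,S),S)\big)_0$. After passing to $\overline{\k}$, $F$-purity of $R$ makes the induced Frobenius on $V$ split (hence injective), and over a perfect field an injective Frobenius on a finite-dimensional space is bijective; thus $\bigcap_e F^e(V)=V$ and the formula follows. That is the entire argument.

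Your Steps~1 and~2 are sound and give an independent derivation of the graded identity $\lambda_{i,j}(R)=\dim_\k\H^i_\m\big(\H^{n-j}_I(S)\big)_{-n}$: the spectral-sequence degeneration (each $\H^q_\m$ of an $F_S$-module is an $\m$-torsion $F_S$-module, hence injective) and the fixed-point argument pinning all shifts of the $E_S(\k)$ summands are correct and pleasant.

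Step~3, however, is a genuine gap. You need the natural map $\H^i_\m(N)\to\H^i_\m\big(\H^{n-j}_I(S)\big)$ to be an isomorphism in degree $-n$, and you justify this by (a) ``injectivity of $\beta$ forces injectivity'' and (b) ``anti-nilpotence forces surjectivity of the transition maps in degree $-n$''. Claim~(a) is false as stated: $\H^i_\m$ is not left exact for $i>0$, so injectivity of every map in the directed system $N\hookrightarrow F^*_S(N)\hookrightarrow\cdots$ does not imply injectivity after applying $\H^i_\m$. Claim~(b) is asserted, not argued: anti-nilpotence is a statement about the $R\langle F\rangle$-module $\H^j_\m(R)$ and its $F$-stable quotients, and you give no mechanism translating this into surjectivity of $\H^i_\m\big(F^{*e}_S(\beta)\big)$ in a fixed degree for a \emph{different} index $i$. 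What you require in Step~3 is essentially the content of Zhang's theorem combined with the $F$-purity input --- exactly what the paper invokes as a black box. Your remark that Step~3 is ``the heart of the matter'' is accurate, but it remains unresolved.
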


This theorem gives an algorithm to compute Lyubeznik numbers of previously unknown examples. For instance, in Subsection \ref{SubSecExamples} we use the previous formula to compute Lyubeznik numbers of some binomial ideals.

In positive characteristic, there is a notion of Lyubeznik numbers for projective varieties. Since Zhang \cite{WZ-projective} showed that these invariants are well-defined, it is natural to ask what aspects of a projective variety $X$ are tracked by $\lambda_{i,j}(X)$. In our last main result, we investigate the meaning of the Lyubeznik numbers for a globally $F$-split projective variety.

\begin{theoremx}[{see Theorem \ref{ThmHighest} and \ref{ThmSheaf}, and Proposition \ref{PropDuality}}]\label{MainProj}
Let $X$ be a globally $F$-split, Cohen-Macaulay and equidimensional projective variety over $\k$.
Let $d$ be the dimension of $X$ and $t$ denote the number of connected components of $X \times_\k\overline{\k}$. Then:
\begin{enumerate}
\item $\lambda_{0,1}(X) = t-1$; 
\item $\lambda_{d+1,d+1}(X)=t$;
\item $\lambda_{0,j}(X)=\dim_\k\left(\H^j(X,\OO_X)\right)$ for $2 \ls j \ls d$;
\item $\lambda_{j,d+1}(X)=\dim_\k\left(\H^{d+2-j}(X,\OO_X)\right)$ for $2 \ls j \ls d$; 
\item all other Lyubeznik numbers  vanish.
\end{enumerate}
\end{theoremx}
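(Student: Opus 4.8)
The plan is to pass to a convenient homogeneous coordinate ring for $X$, translate each $\lambda_{i,j}(X)$ into graded data attached to the local cohomology modules $\H^{N-j}_I(S)$ in the sense of Zhang \cite{WZ-projective}, and then read off the values from the interplay of global $F$-splitting, the Cohen--Macaulay and equidimensional hypotheses, and Serre's description of coherent sheaf cohomology on $X$; I expect the proof to split along the three cited statements. Since $\lambda_{i,j}(X)$ is independent of the projective embedding \cite{WZ-projective}, I would embed $X=\Proj(R)\hookrightarrow\PP^{N-1}_\k=\Proj(S)$ with $S=\k[x_1,\dots,x_N]$ by a sufficiently positive very ample line bundle, chosen so that $R=S/I$ is standard graded and $F$-pure (possible because $X$ is globally $F$-split: a high enough Veronese section ring of a globally $F$-split variety is $F$-split), $\dim R=d+1$, $R$ is equidimensional, and $R\to\bigoplus_m\H^0(X,\OO_X(m))$ is an isomorphism in positive degrees. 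Write $\m$ for the homogeneous maximal ideal and $c=\Ht I=N-d-1$. As $X$ is Cohen--Macaulay, $R_\p$ is Cohen--Macaulay for each homogeneous $\p\neq\m$, so $\H^k_\m(R)$ has finite length for $k\leq d$; and $R$ is reduced, so $\H^0_\m(R)=0$. By graded local duality over $S$, $\H^{N-k}_\m(R)\cong\Ext^k_S(R,S)^\vee$, whence $\Ext^k_S(R,S)=0$ for $k<c$, is of finite length for $k>c$, and $\Ext^c_S(R,S)=\omega_R$ is the graded canonical module.

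The structural input from $F$-purity is this: a finite-length graded module on which the Frobenius acts injectively is concentrated in degree $0$, since the Frobenius carries the degree-$n$ piece into the degree-$pn$ piece, and an injective map of this sort between modules vanishing in all but finitely many degrees kills every piece of nonzero degree. Applied to $\H^k_\m(R)$ for $1\leq k\leq d$ (finite length, with injective Frobenius because $R$ is $F$-pure), together with Serre's isomorphism $[\H^{k+1}_\m(R)]_m\cong\H^k(X,\OO_X(m))$ for $k\geq1$ and the exact sequence $0\to R\to\bigoplus_m\H^0(X,\OO_X(m))\to\H^1_\m(R)\to0$, this gives $\H^1_\m(R)\cong\H^0(X,\OO_X)/\k$ in degree $0$ and $[\H^{k+1}_\m(R)]_0\cong\H^k(X,\OO_X)$ for $1\leq k\leq d$, all of finite dimension; equivalently, global $F$-splitting forces $\H^k(X,\OO_X(m))=0$ for $0<k<d$ and $m\neq0$. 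Consequently, for $2\leq j\leq d$ the $F$-module $\H^{N-j}_I(S)$ is supported only at $\m$ and hence is a finite direct sum of copies of $E=E_S(\k)$, whereas $\H^c_I(S)$ is the ``large'' $F$-module, governed — through \Cref{compatibility filtration} and \Cref{first main result} — by the $R\langle F\rangle$-module $\H^{d+1}_\m(R)$, hence by the section ring itself. Feeding this into Zhang's description of the $\lambda_{i,j}(X)$ \cite{WZ-projective}, the positive-characteristic counterpart of the comparison in \cite{GarciaSabbah} in which these numbers are assembled from a spectral sequence built out of the $\H^i_\m$ of the modules $\H^\bullet_I(S)$ and their gradings, one sees at once that $\lambda_{i,j}(X)=0$ unless $(i,j)=(0,1)$, or $i=0$ and $2\leq j\leq d$, or $j=d+1$ — the support statement (5).

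For the surviving entries: the extreme numbers (\Cref{ThmHighest}) come from $\H^0(X,\OO_X)$, which is a finite \'etale $\k$-algebra that becomes $\overline{\k}^{\,t}$ over $\overline{\k}$; thus $\lambda_{0,1}(X)=\dim_\k\H^1_\m(R)=\dim_\k\H^0(X,\OO_X)-1=t-1$, while $\lambda_{d+1,d+1}(X)=t$ is the interpretation of the highest Lyubeznik number as the number of geometrically connected components, valid here because $\H^{d+1}_\m(R)$ is a faithful $R\langle F\rangle$-module ($R$ being reduced and equidimensional) with injective Frobenius. The sheaf-cohomology row $i=0$ (\Cref{ThmSheaf}, via the projective analogue of \Cref{lyu formula}) is obtained by tracking the finite-length module $\H^{j+1}_\m(R)$, with $[\H^{j+1}_\m(R)]_0\cong\H^j(X,\OO_X)$, through Zhang's machinery to get $\lambda_{0,j}(X)=\dim_\k\H^j(X,\OO_X)$ for $2\leq j\leq d$, the concentration of $\H^{j+1}_\m(R)$ in degree $0$ simultaneously forcing $\lambda_{i,j}(X)=0$ for $i>0$ and $j\leq d$. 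Finally, for the column $j=d+1$ (\Cref{PropDuality}), Matlis duality over $S$ exchanges $\H^c_I(S)$ with $\omega_R$, so that under Serre--Grothendieck duality on the Cohen--Macaulay $X$ — under which $\H^{d-k}(X,\omega_X(m))$ is dual to $\H^k(X,\OO_X(-m))$ — the previous computation transports to $\lambda_{j,d+1}(X)=\dim_\k\H^{d+2-j}(X,\OO_X)$ for $2\leq j\leq d$, the remaining entries of that column vanishing.

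The main obstacle is the graded and $F$-module bookkeeping inside Zhang's machinery. One must show that the copies of $E$ making up $\H^{N-j}_I(S)$ for $2\leq j\leq d$ all carry exactly the single degree shift dictated by the concentration of $\H^{j+1}_\m(R)$ in degree $0$ — so that the relevant graded piece returns $\dim_\k\H^j(X,\OO_X)$ on the nose and every off-pattern entry in (5) is genuinely zero — and, in parallel, that only those $R\langle F\rangle$-composition factors of $\H^{d+1}_\m(R)$ that are accounted for by $\H^\bullet(X,\omega_X)$ contribute to the column $j=d+1$. This is precisely where global $F$-splitting is indispensable — via \Cref{compatibility filtration}, \Cref{first main result}, and the finiteness properties of local cohomology of $F$-pure rings — and it is the step that fails for a general Cohen--Macaulay equidimensional projective variety.
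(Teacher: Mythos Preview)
Your treatment of part~(3) is essentially the paper's argument: the $F$-pure injection $\Ext^{N-j}_S(R,S)\hookrightarrow\H^{N-j}_I(S)$ combined with Huneke--Sharp gives equality of the zeroth Bass numbers, and the concentration of $\H^j_\m(R)$ in degree~$0$ (Schenzel/Ma, since $R_\p$ is Cohen--Macaulay off $\m$) identifies that Bass number with $\dim_\k\H^j(X,\OO_X)$.

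For parts~(1), (2), (4), and the column $j=d+1$ of~(5), however, your route diverges from the paper and contains a genuine error. The paper handles~(1) \emph{without} $F$-purity: it proves directly, via Mayer--Vietoris and the Second Vanishing Theorem, that for any projective variety with all components of dimension $\geq 1$ one has $\H^{N-1}_I(S)\cong E_S(\k)^{t-1}$, whence $\lambda_{0,1}=t-1$. For~(2),~(4), and the vanishing in the column $j=d+1$, the paper does \emph{not} use Serre duality on $X$ at all; instead it runs the Grothendieck spectral sequence $\H^i_\m\H^j_I(S)\Rightarrow\H^{i+j}_\m(S)$. Because $\H^j_I(S)$ is supported at $\m$ for $j\neq c$, the only nondegenerate terms are $\H^0_\m\H^j_I(S)$ for $j>c$ and $\H^i_\m\H^c_I(S)$, and convergence to $\H^\bullet_\m(S)$ forces the identifications $\H^j_\m\H^c_I(S)\cong\H^0_\m\H^{c+j-1}_I(S)$ for $2\leq j\leq d$ and $\H^{d+1}_\m\H^c_I(S)\cong\H^0_\m\H^{N-1}_I(S)\oplus E_S(\k)$. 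This yields $\lambda_{j,d+1}=\lambda_{0,d+2-j}$ and $\lambda_{d+1,d+1}=\lambda_{0,1}+1$ at once, again without any appeal to $F$-splitting.

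Your assertion that ``Matlis duality over $S$ exchanges $\H^c_I(S)$ with $\omega_R$'' is false: $\H^c_I(S)$ is not Artinian, and its Matlis dual is not finitely generated. What is true is that $\Ext^c_S(R,S)$ is a \emph{root} of $\H^c_I(S)$, and that $\omega_R$ is the Matlis dual of $\H^{d+1}_\m(R)$; these are different statements. If you want to compute $\lambda_{j,d+1}$ through $\omega_R$ and Serre duality on $X$, the honest path is via \Cref{lyu formula}: $\lambda_{j,d+1}=\dim_\k\bigl(\Ext^{N-j}_S(\Ext^c_S(R,S),S)\bigr)_0$, which by graded local duality is $\dim_\k[\H^j_\m(\omega_R)]_0$, and then Serre duality on $X$ identifies $[\H^j_\m(\omega_R)]_0$ with the dual of the appropriate $\H^\bullet(X,\OO_X)$. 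That works, but it is not what you wrote, and it still requires you to check the vanishing $\lambda_{0,d+1}=\lambda_{1,d+1}=0$ separately (e.g.\ from $\depth\omega_R\geq 2$). Finally, your invocations of \Cref{compatibility filtration} and \Cref{first main result} are extraneous: neither is used in the paper's proof of this theorem, and nothing in your sketch actually calls on them either.
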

We note that smooth projective varieties are Cohen-Macaulay, and so one can apply the previous theorem to  such varieties, provided they are globally $F$-split.

\section{Vanishing of Lyubeznik numbers for $F$-pure rings}
\subsection{Background on local cohomology and methods in positive characteristic}

Throughout this paper, $R$ will denote a commutative Noetherian ring with unity. Let $I \subseteq R$ be an ideal generated by $\underline{f} = f_1, \ldots, f_t \in R$. The \v{C}ech complex on $f_1, \ldots, f_t$, denoted $\check{C}^{\bullet}(\underline{f};R)$, is the complex
\[
\ds \xymatrix{0 \ar[r] & R \ar[r] & \bigoplus_i R_{f_i} \ar[r] & \bigoplus_{i,j }R_{f_i f_j} \ar[r] & \ldots \ar[r] & R_{f_1 \ldots f_{t}} \ar[r] & R \ar[r] & 0}.
\]
More explicitly, the modules involved are of the form $\check{C}^{i}(\underline{f};R) = \bigoplus_{1 \leqslant j_1 < \ldots < j_i \leqslant t} R_{f_{j_1}\cdots f_{j_i}}$, and the differential map between two consecutive elements of the complex is a sum of localization maps, with appropriate sign choices. Given an $R$-module $M$, we define $\check{C}^{\bullet}(\underline{f};M) := \check{C}^{\bullet}(\underline{f};R) \otimes_R M$. The \emph{$i$-th local cohomology} of $M$ with support in $I$ is the $i$-th cohomology of the complex $\check{C}^{\bullet}(\underline{f};M)$, which we write as $\H^i_I(M) = \H^i \left(\check{C}^{\bullet}(\underline{f};M) \right)$. This is well-defined, meaning that it does not depend on the choice of generators $f_1,\ldots,f_t$ of the ideal $I$. 

Now assume that $(R,\m,\k)$ is a local ring, and let $M$ be a non-zero $R$-module, not necessarily finitely generated. The depth of $M$ is defined as $\Depth(M) = \inf\{j \in \ZZ \mid \H^j_\m(M) \ne 0\}$. Alternatively, one can see that $\Depth(M) = \inf\{j \in \ZZ \mid \Ext^j(\k,M) \ne 0\}$. A related concept is that of Bass numbers. Given a prime ideal $\p \in \Spec(R)$, the $j$-th Bass number of $M$ with respect to $\p$ is $\mu^j(\p,M) := \dim_{\kappa(\p)}\Ext^j_{R_\p}(\kappa(\p),M_\p)$, where $\kappa(\p) = R_\p/\p R_\p$. In particular, $\Depth(M) = \inf\{j \in \ZZ \mid \mu^j(\m,M) \ne 0\}$.

The Lyubeznik numbers of a ring correspond to Bass numbers of certain local cohomology modules. Specifically, let $(R,\m,\k)$ be a local ring, and suppose that there exists a regular local ring $(S,\n, \k)$ that maps onto $R$. Then, we can write $R \cong S/I$ for some ideal $I$ in $S$. Let $n$ be the Krull dimension of $S$. The {\it Lyubeznik number of $R$ with respect to $i,j \in \NN$} is defined as
$$\lambda_{i,j} (R) := \dim_{\k} \Ext^i_S \left(\k, \H^{n-j}_I (S) \right).$$
This definition depends only on $R$, $i$ and $j$, meaning that it does not depend on the choices of $S$ and $I$. For a survey on Lyubeznik numbers, see \cite{SurveyLyuNum}.

Let $R$ be a ring of characteristic $p>0$. We say that $R$ is \emph{$F$-finite} if it is a finitely generated module over itself via the action of the Frobenius map $F\!: R \longrightarrow R$. We say that $R$ is \emph{$F$-pure} if the Frobenius map $F\!: R \longrightarrow R$ is a pure homomorphism, that is, if for all $R$-modules $M$, the induced map $F \otimes 1 \!\!: R \otimes M \longrightarrow R \otimes_R M$ is injective. 
If $R$ is $F$-finite, then $R$ is $F$-pure if and only if the map $F\!\!:R\to R$ splits. Throughout this article, we denote by $F_*R$ the $R$-module structure on $R$ induced by the Frobenius endomorphism. To avoid potential confusion, we will denote the elements of $F_*R$ by $F_*r$, for each $r \in R$. We note that $R$ is $F$-finite if and only if $F_*R$ is a finite $R$-module.  From this perspective, $F$-purity is equivalent to the property that $R \to F_*R$ splits as a map of $R$-modules, if $R$ is $F$-finite \cite[Corollary 5.3]{HRFpurity}.

\begin{definition}[\cite{AE}]
Let $(R,\m,\k)$ be a local ring, which is $F$-finite and $F$-pure. For every positive integer $e$, define
$$I_e(R) := \left\lbrace r \in R \, | \, \varphi \left( F^e_*r \right) \in \m \textrm{ for all } \varphi \in \Hom_R \left( F^e_*R, R \right) \right\rbrace.$$
The \emph{splitting prime} of $R$ is defined as $\cP(R) := \bigcap_e I_e(R)$,  and $  \dim \left( R / \cP(R) \right)$ is denoted by $\sdim(R)$, and called the \emph{splitting dimension} of $R$.
\end{definition}

Observe that, if $f \notin \cP(R)$, then the $R$-module map $R \longrightarrow F^e_*R \stackrel{\cdot F^e_*f}{\longrightarrow} F^e_*R$ splits for all $e \gg 0$.

\subsection{Depth and vanishing theorems}

In this section, we exploit properties of $F$-pure rings to conclude that certain Lyubeznik numbers must vanish. We start with a useful lemma, that relates the depth of a module with that if its localizations, under suitable assumptions.

\begin{lemma} \label{depth formula} Let $(R,\m,k)$ be a Cohen-Macaulay local ring, with canonical module $\omega_R$, and $M$ be a finitely generated $R$-module. Assume that the set $\mathcal{A} := \bigcap_i \Supp_R(\Ext^i_R(M,\omega_R))$ is not empty, where the intersection is taken over all indices $i$ such that $\Ext^i_R(M,\omega_R) \ne 0$. Then, for all primes $\p \in \mathcal{A}$, we have 
\[
\depth(M) = \depth(M_\p) + \dim(R/\p).
\] 
\end{lemma}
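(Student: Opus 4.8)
The plan is to reduce the statement to two classical facts about Cohen--Macaulay local rings: local duality and the Auslander--Buchsbaum-type behaviour of depth under localization, keeping careful track of which Ext-modules are nonzero. Write $d = \dim R$. Since $R$ is Cohen--Macaulay with canonical module $\omega_R$, local duality gives $\H^j_\m(M)^\vee \cong \Ext^{d-j}_R(M,\omega_R)$, so $\depth(M) = \inf\{j : \H^j_\m(M) \ne 0\} = d - \sup\{i : \Ext^i_R(M,\omega_R) \ne 0\}$. Thus if I set $c := \sup\{i : \Ext^i_R(M,\omega_R) \ne 0\}$ (equivalently $c = \operatorname{pd}$ of $M$ over a Gorenstein ambient ring, or $c = d - \depth M$), then $\depth(M) = d - c$.

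Next I would pass to a prime $\p \in \mathcal{A}$. The point of the hypothesis $\p \in \bigcap_i \Supp_R(\Ext^i_R(M,\omega_R))$ is precisely that localizing at $\p$ does not kill any of the nonvanishing Ext-modules: for every $i$ with $\Ext^i_R(M,\omega_R) \ne 0$ we have $\Ext^i_R(M,\omega_R)_\p \ne 0$, and since $\Ext$ commutes with localization, $\Ext^i_{R_\p}(M_\p, (\omega_R)_\p) \ne 0$. Now $R_\p$ is again Cohen--Macaulay, and $(\omega_R)_\p$ is a canonical module for $R_\p$. Hence $c_\p := \sup\{i : \Ext^i_{R_\p}(M_\p,(\omega_R)_\p) \ne 0\}$ satisfies $c_\p \le c$ (localization cannot create new nonvanishing, as $\Ext^i_{R_\p}(M_\p,(\omega_R)_\p) = \Ext^i_R(M,\omega_R)_\p$) and $c_\p \ge c$ (by the membership in $\mathcal{A}$, the top one $\Ext^c_R(M,\omega_R)$ survives localization). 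Therefore $c_\p = c$. Applying local duality over $R_\p$ gives $\depth(M_\p) = \dim(R_\p) - c_\p = \dim(R_\p) - c$.

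Finally I combine the two computations. Since $R$ is Cohen--Macaulay, it is equidimensional and catenary, so $\dim(R_\p) + \dim(R/\p) = \dim(R) = d$, i.e. $\dim(R_\p) = d - \dim(R/\p)$. Substituting,
\[
\depth(M_\p) + \dim(R/\p) = \big(\dim(R_\p) - c\big) + \dim(R/\p) = (d - c) = \depth(M),
\]
which is the claimed equality. (One should also note $M_\p \ne 0$: this follows because $\p$ supports $\Ext^0$ or, more simply, because $\p \in \Supp \Ext^i_R(M,\omega_R) \subseteq \Supp M$ for any $i$.)

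The routine parts are the bookkeeping with local duality and the dimension formula; the only genuinely delicate point is the equality $c_\p = c$, and there the hypothesis $\p \in \mathcal{A}$ is doing exactly the work needed — it guarantees the top nonvanishing $\Ext$ is not lost under localization, which is what prevents $\depth(M_\p)$ from jumping up relative to the naive expectation. So I expect the main obstacle to be simply making precise that $\mathcal{A}$ is the right condition, i.e. verifying that membership in every $\Supp \Ext^i$ (not just the top one) is harmless but that membership in the top one is essential; everything else is standard.
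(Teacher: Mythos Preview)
Your argument is correct and follows essentially the same route as the paper: both proofs use local duality over $R$ and over $R_\p$ to translate depth into the top nonvanishing index of $\Ext^\bullet(M,\omega)$, use the hypothesis $\p\in\mathcal{A}$ to see that this index is unchanged by localization, and conclude via the Cohen--Macaulay dimension formula $\dim R_\p+\dim R/\p=\dim R$. The only difference is cosmetic: the paper passes explicitly to the completion (first of $R$, then of $R_\p$) before invoking local duality, whereas you use the resulting depth--Ext formula directly.
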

\begin{proof}
Let $d = \dim(R)$. We may assume, without loss of generality, that $R$ is complete. By local duality, we have
 \[
\ds \Depth(M) = \min\{j \in \ZZ \mid \H^j_\m(M) \ne 0\} = \min\{j \in \ZZ \mid \Ext^{d-j}_R(M,\omega_R) \ne 0\}. 
 \]
Now let $\p \in \mathcal{A}$. By assumption, we obtain that $\Ext^i_R(M,\omega_R) = 0$ if and only if $\Ext^i_R(M,\omega_R)_\p \cong \Ext^i_{R_\p}(M_\p,(\omega_R)_\p) = 0$. Furthermore, the localization $(\omega_R)_\p$ is a canonical module for $R_\p$, that is, $(\omega_R)_\p \cong \omega_{R_\p}$ \cite[Remark 2.2 i)]{HoHuOmega}. Putting these facts together we get
\[
\ds \Depth(M) = \min\{j \in \ZZ \mid \Ext^{d-j}_{R_\p}(M_\p,\omega_{R_\p})\ne 0\}.
\]
We denote by $\widehat{R_\p}$ the completion of the local ring $R_\p$ at its maximal ideal. We have that

\begin{align*}
\Ext^{d-j}_{R_\p}(M_\p,\omega_{R_\p})\neq 0
& \Longleftrightarrow \Ext^{\dim(R_\p) + \dim(R/\p) -j}(M_\p,\omega_{R_\p})\neq 0, \hbox{ because }d=\dim(R_\p) + \dim(R/\p)\\
& \Longleftrightarrow \Ext^{\dim(R_\p) + \dim(R/\p) -j}(M_\p,\omega_{R_\p})\otimes_{R_\p}\widehat{R_\p} \ne 0\\
& \Longleftrightarrow \H^{j-\dim(R/\p)}_{\p \widehat{R_\p} }(M_\p\otimes_{R_\p}\widehat{R_\p} ) \ne 0,\hbox{ by local duality}\\
& \Longleftrightarrow \H^{j-\dim(R/\p)}_{\p R_\p}(M_\p) \ne 0 .\\
\end{align*}

Therefore,
\[
\ds \Depth(M) = \min\{j \in \ZZ \mid \H^{j-\dim(R/\p)}_{\p R_\p}(M_\p) \ne 0\} = \Depth(M_\p) + \dim(R/\p).
\]
\end{proof}

In what follows, let $(R,\m,\k)$ be a local ring of prime characteristic $p>0$. We denote by $\ck{(-)}$ the Matlis duality functor, that is, the functor $\Hom_R(-,E_R(\k))$, where $E_R(\k)$ denotes the injective hull of the residue field $\k$.
\begin{definition} We say that a functor $G$, from the category of $R$-modules to itself, commutes with the action of Frobenius if there is a functorial isomorphism $G(F_*-) \cong F_*G(-)$. 
\end{definition}

Here are some examples of such functors:
\begin{itemize}
\item The identity functor trivially commutes with the action of Frobenius,
\item The Matlis duality functor commutes with the action of Frobenius \cite[Lemma 5.1]{BB-CartierMod}.
\item  If $R$ is a quotient of a regular local ring $S$, then the functor $G(-) = \Ext^i_S(-,S)$ commutes with the action of Frobenius \cite[Proposition 5.7 and Lemma 5.8]{W} for all $i \in \NN$.
\end{itemize}

\begin{proposition} \label{vanishing}
Let $(S,\n,\k)$ be a complete $F$-finite regular local ring, $I\subseteq R$ be an ideal such that $R=S/I$ is an $F$-pure ring. Let $\mod(R)$ denote the category of finitely generated $R$-modules, and $G \!: \mod(R) \to \mod(R)$ be an additive functor that commutes with the action of Frobenius. If $\cP(R)$ denotes the splitting prime of $R$, then
\[
\ds \Depth(G(R))= \sdim(R)+\Depth(G(R)_{\cP(R)}).
\]
\end{proposition}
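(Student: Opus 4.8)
The plan is to reduce the statement to an application of \Cref{depth formula} with $M = G(R)$, $\omega_R$ the canonical module of $R$, and the distinguished prime taken to be $\cP(R)$. For this I need two things: first, that $\cP(R)$ lies in $\mathcal{A} := \bigcap_i \Supp_R\big(\Ext^i_R(G(R),\omega_R)\big)$ (where $i$ ranges over the indices with nonvanishing $\Ext$), so the lemma applies; and second, I must make sure the Cohen--Macaulay hypothesis of \Cref{depth formula} is met, or else adapt the argument, since an $F$-pure ring $R$ need not be Cohen--Macaulay. For the latter point, the key observation is that we are free to pass to $S$: writing $R = S/I$ over the regular (hence Cohen--Macaulay) complete local ring $S$, I can reinterpret $\Ext^i_R(G(R),\omega_R)$ via $\omega_R = \Ext^{n-d}_S(R,S)$ and $\Ext$-adjunction, and run the argument of \Cref{depth formula} over $S$ rather than over $R$; alternatively, and more cleanly, I will Matlis-dualize and phrase everything in terms of the $F$-module $\H^\bullet_\n\big(G(R)\big)$ and its Matlis dual $\Ext^{n-\bullet}_S(G(R),S)$, which is where the Frobenius-commuting hypothesis does its work.

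The heart of the matter is showing $\cP(R) \in \mathcal{A}$, and this is exactly where $F$-purity enters. Here is the mechanism I would use. Since $G$ commutes with the action of Frobenius and $R \to F_*R$ splits (as $R$ is $F$-finite and $F$-pure), applying $G$ produces a split injection $G(R) \to G(F_*R) \cong F_*G(R)$; that is, $G(R)$ is an $F$-pure $R$-module in the appropriate sense, and more precisely it carries the structure of a Cartier/$R\langle F\rangle$-type object compatible with $G$. Now fix an index $i$ with $E := \Ext^i_R(G(R),\omega_R) \neq 0$, equivalently (by local duality over $S$, say, as in \Cref{depth formula}) a cohomological index where $\H^j_\n(G(R)) \neq 0$. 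The functor $\Ext^{n-j}_S(-,S)$ also commutes with Frobenius, so $\H^j_\n(G(R))^\vee \cong \Ext^{n-j}_S(G(R),S)$ inherits a Frobenius action; combining with the split surjection $F_* G(R) \onto G(R)$ one gets that this $\Ext$-module is a nonzero $F$-module over $S$ which is "generated at the top" in a way that forces $\cP(R)$ (equivalently the prime $\cP(R)S$, or rather the relevant prime of $S$) into its support. Concretely: if some power $F^e_*f$ with $f\notin\cP(R)$ gives a splitting of $R\to F^e_*R$, then it gives a splitting after applying $G$ and after applying $\Ext_S$, and one checks that inverting $f$ cannot kill a nonzero Frobenius-stable module of this shape — localizing at $f$ is compatible with the Frobenius structure, and a nonzero $F$-module cannot become zero after inverting an element outside the "non-$F$-pure locus", which is precisely $V(\cP(R))$. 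Hence every nonzero $\Ext^i_R(G(R),\omega_R)$ is supported at $\cP(R)$, so $\cP(R)\in\mathcal{A}$.

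With $\cP(R)\in\mathcal{A}$ established, \Cref{depth formula} (run over $S$ to sidestep non-Cohen--Macaulayness of $R$, or over $R$ if one has already argued $\mathcal{A}\neq\emptyset$ suffices) yields
\[
\Depth\big(G(R)\big) = \Depth\big(G(R)_{\cP(R)}\big) + \dim\big(R/\cP(R)\big) = \Depth\big(G(R)_{\cP(R)}\big) + \sdim(R),
\]
which is exactly the claim. The one subtlety to be careful about is that \Cref{depth formula} as stated assumes $R$ Cohen--Macaulay, so in writing this up I would either (a) note that the proof of \Cref{depth formula} goes through verbatim with $R$ replaced by the regular ring $S$ and $\omega_R$ by $S$, using $\Ext^i_R(G(R),\omega_R)$-to-$\Ext$-over-$S$ translation, or (b) observe that what the proof of \Cref{depth formula} really uses is local duality plus $(\omega_R)_\p \cong \omega_{R_\p}$ and the dimension formula $d = \dim R_\p + \dim R/\p$, all of which are available because $S$, not $R$, is the ambient Cohen--Macaulay (indeed regular, hence catenary and equidimensional) ring.

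\textbf{Main obstacle.} The crux — and the step I expect to require the most care — is the claim that a nonzero Frobenius-stable $\Ext$-module arising from $G(R)$ has $\cP(R)$ in its support, i.e., that it cannot be killed by inverting an element outside $\cP(R)$. This is the place where one must genuinely use the definition of the splitting prime and the compatibility of all the functors involved with Frobenius; everything else is bookkeeping around local duality and the dimension formula over $S$.
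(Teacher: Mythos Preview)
Your proposal is correct and follows essentially the same route as the paper: apply \Cref{depth formula} over the regular ring $S$ (with $\omega_S \cong S$) to $M=G(R)$, after showing that $\cP(R)$ lies in the support of every nonzero $\Ext^{n-j}_S(G(R),S)$. The paper's execution of the key step is slightly more direct than your localization phrasing: it shows $\Ann_R\big(\Ext^{n-j}_S(G(R),S)\big)\subseteq\cP(R)$ by taking $f$ in the annihilator with $f\notin\cP(R)$, choosing $e\gg 0$ so that $R\to F^e_*R\xrightarrow{\cdot F^e_*f}F^e_*R$ splits, and then observing that after applying $G$ and $\Ext^{n-j}_S(-,S)$ (both of which commute with Frobenius) the resulting split map factors through the zero map $\cdot F^e_*f$, forcing $\Ext^{n-j}_S(G(R),S)=0$; the paper also distinguishes the covariant and contravariant cases of $G$ at this point, which you should make explicit when writing it up.
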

\begin{proof}
Let $\m=\n/I$ be the maximal ideal of $R$, and let $j \in \NN$ be such that $\H^j_\m(G(R)) \ne 0$. By local duality, we have that $\ck{\H^j_\m(G(R))} \cong \Ext^{n-j}_S(G(R),S) \ne 0$, where $n=\dim(S)$. 
If $Q$ is a prime ideal in $S$ of height strictly less than $n-j$, then $\Ext^{n-j}_S(G(R),S)_Q \cong \Ext^{n-j}_{S_Q}(G(R)_Q,S_Q)$ must be zero, since $\InjDim_{S_Q}(S_Q) = \Ht_S(Q)$.
Therefore, 
$$\dim(\Ext^{n-j}_S(G(R),S)) \ls n-(n-j) = j.$$
We now claim that $\Ann_R(\Ext^{n-j}_S(G(R),S)) \subseteq \cP(R)$. Let $f \in \Ann_R(\Ext^{n-j}_S(G(R),S))$, and assume that $f \notin \cP(R)$. This means that for all $e \gg 0$, the inclusion $R \cdot ( F^e_*f ) \subseteq F^e_*R$ splits as a map of $R$-modules, where $R \cdot (F^e_*f)$ is the cyclic $R$-submodule of $F^e_*R$ generated by $F^e_*f$. Note that $f \cdot \Ext^{n-j}_S(G(R),S) = 0$ is equivalent to the statement that the image of the map induced by applying $\Ext^{n-j}_S(-,S)$ to the map $G(R) \stackrel{\cdot f}{\longrightarrow} G(R)$ is zero. Applying $F^e_*(-)$ to $\Ext^{n-j}_S(G(R),S)\stackrel{\cdot f}{\longrightarrow} \Ext^{n-j}_S(G(R),S)$, we then obtain that the image of 
\[
\ds \Ext^{n-j}_S(G(F^e_*R),S) \cong F^e_*(\Ext_S^{n-j}(G(R),S)) \stackrel{\cdot F^e_*f}{\longrightarrow} F^e_*(\Ext_S^{n-j}(G(R),S)) \cong \Ext^{n-j}_S(G(F^e_*R),S)
\]
is still zero, where we used that both $\Ext^{n-j}_S(-,S)$ and $G(-)$ commute with the action of Frobenius. However, the composition $R \hookrightarrow F^e_*R \stackrel{\cdot F^e_*f}{\longrightarrow} F^e_*R$ of maps of $R$-modules splits by assumption. If $G$ is covariant, the induced map 
\[
\xymatrixcolsep{5mm}
\xymatrixrowsep{2mm}
\xymatrix{
\Ext_S^{n-j}(G(F^e_*R),S) \ar[rr]^-{\cdot F^e_*f} &&  \Ext_S^{n-j}(G(F^e_*R),S) \ar[rr] && \Ext_S^{n-j}(G(R),S)
}
\]
is surjective. This gives a contradiction, since we are assuming that the first map is zero, and that $\Ext^{n-j}_S(G(R),S) \ne 0$. If $G$ is contravariant, the induced map 
\[
\xymatrixcolsep{5mm}
\xymatrixrowsep{2mm}
\xymatrix{
\Ext_S^{n-j}(G(R),S) \ar[rr] && \Ext_S^{n-j}(G(F^e_*R),S) \ar[rr]^-{\cdot F^e_*f} &&  \Ext_S^{n-j}(G(F^e_*R),S) 
}
\]
is injective, which is again a contradiction, since the rightmost map is zero.
Either way, we reach the desired contradiction, and hence we proved that $\Ann_R(\Ext_S^{n-j}(R,S)) \subseteq \cP(R)$. In particular, this implies that $\cP(R) \subseteq \bigcap_i \Supp_R(\Ext^i_S(G(R),S))$, where the intersection is taken over the indices $i$ for which $\Ext^i_S(G(R),S) \ne 0$. The statement in the proposition now follows as a direct application of Lemma \ref{depth formula}, noting that $\omega_S \cong S$ since $S$ is a regular local ring.
\end{proof}
As a consequence of Proposition \ref{vanishing}, we obtain a result that gives the vanishing of $\lambda_{i,j}(R)$ for $j < \sdim(R)$.
\begin{corollary} \label{CorollVanishing}
Let $(S,\n,k)$ be a complete $F$-finite regular local ring, $I \subseteq R$ an ideal such that $R=S/I$ is an $F$-pure ring. For an integer $j \in \NN$, if $\H^j_I(S)\neq 0$, then $\Depth(\H^j_I(S))\gs \sdim(R)$.
\end{corollary}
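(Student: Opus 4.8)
The plan is to deduce this from Proposition~\ref{vanishing}, applied to the additive contravariant functor $G=\Ext^j_S(-,S)\colon\mod(R)\to\mod(R)$, which commutes with the action of Frobenius by the third example listed before that proposition. Proposition~\ref{vanishing} controls the depth of the finitely generated module $G(R)=\Ext^j_S(R,S)$, so the only thing left to do is to pass from that module to the (not finitely generated) module $\H^j_I(S)$; the bridge is the presentation of local cohomology as a filtered colimit of $\Ext$ modules together with the flatness of Frobenius on the regular ring $S$.

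First I would record the identification $\H^j_I(S)\cong\varinjlim_t\Ext^j_S(S/I^t,S)$ and the fact that the Frobenius powers $\{I^{[p^e]}\}_e$ are cofinal among the ordinary powers $\{I^t\}_t$, so that $\H^j_I(S)\cong\varinjlim_e\Ext^j_S(S/I^{[p^e]},S)$. Writing $F^{e*}$ for the base change of $S$-modules along the $e$-th iterate of Frobenius $F^e\colon S\to S$, one has $S/I^{[p^e]}\cong F^{e*}(S/I)$, and since $F^e$ is flat, flat base change in the module argument of $\Ext$ gives $\Ext^j_S(S/I^{[p^e]},S)\cong F^{e*}\Ext^j_S(R,S)$, the transition maps being the image under $F^{e*}$ of a fixed $S$-linear map $\Ext^j_S(R,S)\to F^{*}\Ext^j_S(R,S)$. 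Hence
\[
\H^j_I(S)\;\cong\;\varinjlim_e F^{e*}\bigl(\Ext^j_S(R,S)\bigr),
\]
and in particular $\Ext^j_S(R,S)=0$ would force $\H^j_I(S)=0$; so the hypothesis $\H^j_I(S)\neq 0$ guarantees $G(R)=\Ext^j_S(R,S)\neq 0$.

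Next I would compare depths. Since $F^e$ is flat and $\sqrt{\n^{[p^e]}}=\n$, local cohomology commutes with $F^{e*}$, that is $\H^k_\n(F^{e*}M)\cong F^{e*}\H^k_\n(M)$ for every $S$-module $M$, and it commutes with filtered colimits; applying both to the displayed isomorphism gives $\H^k_\n(\H^j_I(S))\cong\varinjlim_e F^{e*}\H^k_\n(\Ext^j_S(R,S))$. Therefore $\H^k_\n(\Ext^j_S(R,S))=0$ implies $\H^k_\n(\H^j_I(S))=0$, and so $\Depth_S(\H^j_I(S))\geq\Depth_S(\Ext^j_S(R,S))=\Depth(\Ext^j_S(R,S))$, the last equality because the $\n$- and $\m$-torsion functors agree on $R$-modules. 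Now Proposition~\ref{vanishing} applied to $G$ gives $\Depth(\Ext^j_S(R,S))=\sdim(R)+\Depth\bigl(\Ext^j_S(R,S)_{\cP(R)}\bigr)$; since $G(R)\neq 0$ we have $\cP(R)\in\Supp_R(G(R))$ (as established in the proof of that proposition), hence $\Ext^j_S(R,S)_{\cP(R)}\neq 0$ and the second summand is $\geq 0$. Thus $\Depth(\Ext^j_S(R,S))\geq\sdim(R)$, and combining the two inequalities yields $\Depth_S(\H^j_I(S))\geq\sdim(R)$.

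Everything besides Proposition~\ref{vanishing} is standard here (cofinality of Frobenius powers, flat base change for $\Ext$ and for local cohomology, and exactness of filtered colimits); the one point that needs a little care is keeping track that the flat base change in question is along Frobenius in the \emph{module} variable of $\Ext^j_S(-,S)$, so that the colimit presenting $\H^j_I(S)$ genuinely has Frobenius-linear transition maps and $\H^k_\n$ can be pushed through it term by term. Conceptually, then, the only substantive ingredient is Proposition~\ref{vanishing}, and the corollary amounts to the remark that the depth of an $F$-module is bounded below by that of a ``root'' of it.
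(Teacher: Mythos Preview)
Your proof is correct and follows the same overall strategy as the paper: apply Proposition~\ref{vanishing} to $G=\Ext^j_S(-,S)$ to bound $\Depth(\Ext^j_S(R,S))$ below by $\sdim(R)$, then transfer this bound to $\H^j_I(S)$. The only difference is in this transfer step. The paper invokes \cite[Theorem~2.1]{HunekeSharp}, which bounds the Bass numbers $\mu^i(\n,\H^j_I(S))$ by $\mu^i(\n,\Ext^j_S(R,S))$, while you instead unwind the $F$-module presentation $\H^j_I(S)\cong\varinjlim_e\cF^e_S(\Ext^j_S(R,S))$ and push $\H^k_\n$ through the colimit using flatness of Frobenius. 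Your route is self-contained and in fact re-proves the special case of Huneke--Sharp needed here; the paper's route is shorter since it simply cites that result. One small remark: you do not need to argue separately that $\Ext^j_S(R,S)_{\cP(R)}\neq 0$, since the depth of any module (zero or not) is nonnegative in the extended sense, so the equality in Proposition~\ref{vanishing} already gives $\Depth(\Ext^j_S(R,S))\geq\sdim(R)$ once $\Ext^j_S(R,S)\neq 0$.
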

\begin{proof}
Consider the functor $G = \Ext^j_S(-,S)$, which commutes with the action of Frobenius. It follows from Proposition \ref{vanishing} that $\Depth(\Ext^j_S(R,S)) \gs \sdim(R)$. In particular, $\mu^i(\n,\Ext^j_S(R,S)) = 0$ for all $i < \sdim(R)$.
Therefore, we have that $\mu^i(\n,\H^j_I(S)) = 0$ for all $i < \sdim(R)$ \cite[Theorem 2.1]{HunekeSharp}, which shows that $\Depth(\H^j_I(S)) \gs  \sdim(R)$.
\end{proof}

\begin{thm}\label{first main result}
Let $(S,\n,\k)$ be a complete $F$-finite regular local ring of dimension $n$.
Let $I\subseteq R$ be an ideal such that $R=S/I$ is an $F$-pure ring. Then, $\lambda_{i,j}(R)=0$ if either $i<\sdim(R)$ or $j<\sdim(R)+1$. Furthermore, if $R$ satisfies Serre's condition $(S_k)$, we have that $\lambda_{i,j}(R)=0$ if $j<\min\{\sdim(R)+k,\dim(R)\}$.
\end{thm}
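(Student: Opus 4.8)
The plan is to deduce Theorem \ref{first main result} from the vanishing-of-depth results already established, by translating the condition on Lyubeznik numbers into a condition on Bass numbers of local cohomology modules of $S$. Recall that $\lambda_{i,j}(R) = \dim_\k \Ext^i_S(\k, \H^{n-j}_I(S)) = \mu^i(\n, \H^{n-j}_I(S))$. So the statement $\lambda_{i,j}(R) = 0$ for $i < \sdim(R)$ is immediate from Corollary \ref{CorollVanishing}: either $\H^{n-j}_I(S) = 0$, in which case all its Bass numbers vanish, or $\H^{n-j}_I(S) \ne 0$, in which case $\Depth(\H^{n-j}_I(S)) \gs \sdim(R)$, and the $i$-th Bass number at $\n$ vanishes for $i < \sdim(R)$ by the characterization $\Depth(M) = \inf\{i \mid \mu^i(\n,M) \ne 0\}$.

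For the condition $j < \sdim(R)+1$, i.e. $j \ls \sdim(R)$, I would argue that $\H^{n-j}_I(S) = 0$ outright, so that $\lambda_{i,j}(R) = 0$ for all $i$. The key input is again Corollary \ref{CorollVanishing} combined with the fact (used in its proof, via \cite[Theorem 2.1]{HunekeSharp} and local duality) that $\dim_S \H^{n-j}_I(S) \ls j$: indeed $\ck{\H^{n-j}_\m(\cdot)}$ is built from $\Ext^j_S(R,S)$, which has dimension at most $j$ by the injective-dimension bound on $S_Q$ used in the proof of Proposition \ref{vanishing}. If $\H^{n-j}_I(S)$ were nonzero it would be a nonzero module of dimension at most $j$ with depth at least $\sdim(R) \gs j$; but the depth of a nonzero finitely generated-type module cannot exceed its dimension, and $\H^{n-j}_I(S)$ inherits enough finiteness (its Matlis dual $\Ext^j_S(R,S)$ is finitely generated) to make this contradiction precise. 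Hence $\H^{n-j}_I(S) = 0$.

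For the refinement under Serre's condition $(S_k)$: the point is that $(S_k)$ on $R$ forces $\H^j_\m(R) = 0$ for $j < \min\{\dim R, \ldots\}$ in an appropriate range, and dually controls $\Ext^{n-j}_S(R,S)$. More precisely, I would run the same argument as in Proposition \ref{vanishing} but track the sharper depth estimate: $(S_k)$ gives $\Depth(R_\p) \gs \min\{k, \dim R_\p\}$ for all $\p$, and applying this at $\p = \cP(R)$ in the formula $\Depth(\Ext^j_S(R,S)) = \sdim(R) + \Depth(\Ext^j_S(R,S)_{\cP(R)})$ (Proposition \ref{vanishing} with $G = \Ext^j_S(-,S)$), together with the identification of $\Ext^j_S(R,S)_{\cP(R)}$ with a local cohomology module of the localization $R_{\cP(R)}$ and the $(S_k)$ bound there, yields $\Depth(\Ext^j_S(R,S)) \gs \min\{\sdim(R) + k, \dim(R)\}$. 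Translating back via \cite[Theorem 2.1]{HunekeSharp} gives $\mu^i(\n, \H^j_I(S)) = 0$, hence $\lambda_{i,j}(R) = 0$, in the stated range.

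The main obstacle I anticipate is the bookkeeping in the $(S_k)$ case: one must be careful that the Serre condition transfers correctly to the relevant localization and completion of $R$ appearing in the duality computations of Lemma \ref{depth formula} and Proposition \ref{vanishing}, and that the $\min$ with $\dim(R)$ enters at the right place (it comes from the trivial bound $\Depth(\Ext^j_S(R,S)) \ls \dim \Ext^j_S(R,S) \ls \dim R$, but one should verify the interplay with the dimension-$j$ bound to see which Lyubeznik numbers are actually killed). The first two parts, by contrast, should be essentially a direct unwinding of Corollary \ref{CorollVanishing} and the dimension estimate already contained in the proof of Proposition \ref{vanishing}.
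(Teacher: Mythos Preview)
Your treatment of the case $i < \sdim(R)$ is correct and matches the paper. The other two parts have genuine gaps.

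For $j < \sdim(R)+1$: even if you make the depth--dimension comparison precise by passing to the finitely generated module $\Ext^{n-j}_S(R,S)$ (note: this is the \emph{root} of $\H^{n-j}_I(S)$ and the Matlis dual of $\H^j_\m(R)$, not the Matlis dual of $\H^{n-j}_I(S)$), you obtain $\dim \ls j$ and $\Depth \gs \sdim(R)$, which forces vanishing only for $j < \sdim(R)$. The boundary case $j = \sdim(R)$ is not covered: $\Depth = \sdim(R) = j = \dim$ is perfectly consistent with a nonzero module. The paper closes this gap by a different mechanism. It applies Proposition~\ref{vanishing} with $G$ equal to the \emph{identity} functor, obtaining $\Depth(R) = \sdim(R) + \Depth(R_{\cP(R)})$, and then uses the $F$-pure equivalence $\H^j_\m(R) = 0 \Leftrightarrow \H^{n-j}_I(S) = 0$ \cite{LyuVan}. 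Since $R$ is reduced (so satisfies $(S_1)$), one has $\Depth(R_{\cP(R)}) \gs 1$ unless $\cP(R)$ is minimal, in which case $R$ is strongly $F$-regular and hence Cohen--Macaulay; either way the extra $+1$ is obtained.

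For the $(S_k)$ refinement your approach is misdirected, not merely a matter of bookkeeping. The Serre condition bounds $\Depth(R_\p)$, not $\Depth\bigl(\Ext^j_S(R,S)_\p\bigr)$; there is no reason $(S_k)$ on $R$ should control the depth of this Ext module at $\cP(R)$. More fundamentally, a lower bound on $\Depth(\Ext^j_S(R,S))$ would constrain Bass numbers in the \emph{first} index $i$, whereas the $(S_k)$ conclusion concerns the \emph{second} index $j$: one must show $\H^{n-j}_I(S) = 0$ outright for $j < \min\{\sdim(R)+k,\dim(R)\}$. The paper again takes $G = \id$, so that $(S_k)$ gives directly $\Depth(R) \gs \sdim(R) + \min\{k,\Ht(\cP(R))\}$, and then kills $\H^{n-j}_I(S)$ via the $F$-pure equivalence. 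The $\min$ with $\dim(R)$ enters because $\sdim(R) + \Ht(\cP(R)) = \dim(R)$ once $R$ is catenary and equidimensional (guaranteed by $F$-finiteness together with $(S_2)$), with the case $k=1$ handled separately.
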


\begin{proof}
By Corollary \ref{CorollVanishing}, we directly obtain that $\lambda_{i,j}(R) = \dim_\k(\Ext^i_S(\k,\H^{n-j}_I(S))) = 0$ for all $i<\sdim(R)$. To prove the vanishing of the remaining Lyubeznik numbers, it is enough to show the last claim. In fact, $F$-pure rings are reduced, hence $R$ automatically satisfies Serre's condition $(S_1)$. Note that $\H^j_\m(R) = 0$ if and only if $\H^{n-j}_I(S) = 0$ \cite[Theorem 3.1]{LyuVan}, as a consequence of the $F$-purity of $R$. It follows from Proposition \ref{vanishing} that $\H^{n-j}_I(S) = 0$ for all $j < \sdim(R) + \Depth(R_{\cP(R)})$. Since 
$$\Depth(R_{\cP(R)}) \geqslant \min \{ k, \Ht(\cP(R)) \},$$
we have that $\H^{n-j}_I(S)=0$ for all 
$$j  < \min\{\sdim(R) + k, \sdim(R)+\Ht(\cP(R))\}.$$
If $k=1$, then either $\min\{k,\Ht(\cP(R))\} = 1$, or $\cP(R)$ is a minimal prime of $R$. In the first case, we clearly have $\H^{n-j}_I(S)=0$ for all $j<\min\{k+\sdim(R),\dim(R)\}$. On the other hand, if $\cP(R)$ is a minimal prime, then we must actually have $\cP(R)=0$, and $R$ is strongly $F$-regular. In this case, $R$ is Cohen-Macaulay, hence $\H^{n-j}_I(S)=0$ for all $j<\dim(R)$. If $k \geqslant 2$, then $R$ is catenary and equidimensional. In fact, $F$-finite rings are catenary, and catenary $(S_2)$ rings are equidimensional. Then $\Ht(\cP(R))+\sdim(R) = \dim(R)$ holds, completing the proof.
\end{proof}

\section{Properties of associated primes of local cohomology modules} \label{SecAssPrimes}

Let $(S,\n,\k)$ be a complete $F$-finite regular local ring of dimension $n$, and $I\subseteq R$ be an ideal such that $R=S/I$ is an $F$-pure ring. Observe that $\Ass_S(\H^i_I(S)) = \Ass_S(\Ext_S^i(R,S))$. In fact, $\Ass_S(\H^i_I(S)) \subseteq \Ass_S(\Ext^i_S(R,S))$ always holds  \cite[Corollary 2.3]{HunekeSharp}. The converse containment holds because $R$ is $F$-pure \cite[Theorem 1.3]{AnuragUliPure}. It can be seen from the methods used in the proof of Proposition \ref{vanishing}, applied to the identity functor, that every minimal prime over $\Supp_R(\Ext_S^i(R,S))$ is contained in the splitting prime $\cP(R)$. In this section, we extend this result to any associated prime of $\Ext_S^i(R,S)$ and, more generally, to every prime appearing as a factor in a prime filtration of $\Ext^i_S(R,S)$. In fact, we will show that all these prime ideals are compatible, which implies that they are contained in $\cP(R)$. We first recall some results about $F$-modules and $R\lr{F}$-modules, which are key techniques used in this section.

\subsection{Background on $F$-modules and $R\langle F \rangle$-modules} \label{SubSecFmodRFmod}

For more information regarding $F$-modules, $R\lr{F}$-modules, and their relations we refer to Lyubeznik's foundational paper \cite{LyuFmod}.

Let $S$ be a Noetherian ring of characteristic $p>0$. For an integer $e>0$, let $F^e_*S$ denote $S$ viewed as a module over itself under restriction of scalars via $F^e\!:S \to S$, the $e$-th iteration of the Frobenius map. We let $\cF_S^e(-)$ denote the Peskine-Szpiro Frobenius functor on $S$, that is defined as follows: given any $S$-module $M$, $\cF_S^e(M)$ is the base change $M \otimes_S F^e_*S$, followed by the natural identification of $F^e_*S$ with $S$ on the second component of the tensor product.

\begin{example} Let $S$ be a Noetherian ring of characteristic $p>0$. 
\begin{itemize}
\item  $\cF^e_S(S)=S$ for all $e$. 
\item If $I \subseteq S$ is an ideal, we have $\cF^e_S(S/I) = S/I^{[p^e]}$, where $I^{[p^e]} = (x^{p^e} \mid x \in I)$ is the $p^e$-th Frobenius power of the ideal $I$. 
\item More generally, let $S^m \stackrel{\phi}{\longrightarrow} S^n \to M \to 0$ be a presentation of a finite $S$-module $M$, with $\phi$ an $n \times m$ matrix. Then $S^m \stackrel{\phi^{[p^e]}}{\longrightarrow} S^n \to \cF^e_S(M) \to 0$ is a presentation of $\cF^e_S(M)$, where $\phi^{[p^e]}$ is the matrix whose entries are the $p^e$-th powers of the entries of $\phi$.
\end{itemize}
\end{example}

We now recall the main definitions and some basic concepts related to $F$-modules. We will refer to them as $F^e$-modules, as we will allow powers of the Peskine-Szpiro Frobenius functor. As already noted in \cite[Remark 5.6 a.]{LyuFmod}, most of the results about $F$-modules, appropriately restated, readily generalize to this context. 

In what follows, $(S,\n,\k)$ is assumed to be an $F$-finite regular local ring of prime characteristic $p>0$. In this case, the functor $\cF_S^e(-)$ is exact for all positive integers $e$ \cite[Theorem 2.1]{Kunz}.
\begin{definition}{\cite[Definition 1.1]{LyuFmod}} An $F^e_S$-module is a pair $(\M, \theta)$, where $\M$ is an $S$-module, and $\theta \in \Hom_S(\M,F^e_S(\M))$ is an isomorphism. We call $\theta$ the structure morphism of $\M$. A morphism of $F^e_S$-modules is an $S$-linear map $f\!:\M \to \N$ such that the following diagram commutes:
\[
\xymatrix{
\M \ar[d] \ar[r] & \N \ar[d] \\ 
\cF^e_S(\M) \ar[r] & \cF^e_S(\N)
}
\]
We say that $\beta\!:M \to \cF^e_S(M)$ is a generating morphism for an $F^e_S$-module $(\M,\theta)$ if $\M$ is the direct limit of the inductive system
\[
\xymatrix{
M \ar[d]^-\beta \ar[r]^-\beta & \cF^e_S(M) \ar[d]^-{\cF^e_S(\beta)} \ar[r]^-{\cF^e_S(\beta)} \ar[r] & \cF^{2e}_S(M) \ar[d]^-{\cF_S^{2e}(\beta)} \ar[r] & \cdots \\
\cF^e_S(M) \ar[r]^-{\cF^e_S(\beta)} & \cF^{2e}_S(M)  \ar[r]^-{\cF_S^{2e}(\beta)} & \cF^{3e}_S(M) \ar[r] & \cdots
}
\]
and $\theta$ is the map induced on the direct limits by the vertical arrows in this diagram. An $F^e_S$-module is called $F^e_S$-finite if it has a generating morphism $\beta\!:M \to \cF^e_S(M)$, with $M$ a finitely generated $S$-module. Furthermore, if $\beta$ is injective, we call $M$ a root of $\M$, and $\beta$ a root morphism of $\M$.
\end{definition}

\begin{remark}  If $S$ is complete, any $F^e_S$-finite module $\M$ has a unique minimal root, which is contained in every root of $\M$ \cite[Theorem 3.5]{LyuFmod}.
\end{remark}
Meaningful examples of $F^e_S$-modules include local cohomology modules $\H^i_I(S)$, where $I$ is an ideal in $S$. A generating morphism is given by the map $\Ext^i_S(S/I,S) \to F_S(\Ext^i_S(S/I,S)) \cong \Ext^i_S(S/I^{[p]},S)$, where the isomorphism is due to the fact that $F_S(-)$ is exact.

We now recall the definition of $R\langle F \rangle$-modules, and adapt it to higher order Frobenius actions. Let $(R,\m,k)$ be a complete local ring of characteristic $p>0$.

\begin{definition} A Frobenius action on an $R$-module $M$ is an additive map $\varphi:\! M \to M$ such that $\varphi(rm) = r^p\varphi(m)$ for all $r \in R$ and $m\in M$. Equivalently, this corresponds to an $R$-linear map $M \to \varphi_*M$. More generally, an $e$-th Frobenius action on $M$ is an additive map $\varphi_e\!:M \to M$ such that $\varphi_e(rm) = r^{p^e} \varphi_e(m)$ for all $r \in R$ and $m\in M$.
\end{definition}

$R$-modules with an $e$-th Frobenius action $\varphi_e\!:M \to M$ correspond to left $R\lrphi{e}$-modules, where $R\lrphi{e}$ denotes the subring of $\End_\ZZ(R,R)$ generated by $R$ and an $e$-th Frobenius action $\varphi_e$. The ring $R$ acts on itself by multiplication, and the interaction with $\varphi_e$ is subject to the rule $\varphi_er = r^{p^e} \varphi_e$, for all $r \in R$. 
Since in this section we only deal with left $R\lrphi{e}$-modules, we will omit the word ``left''.

\begin{definition} We say that an $R\lrphi{e}$-module is cofinite if it is Artinian as an $R$-module.
\end{definition}

Let $F\!:R \to R$ be the Frobenius endomorphism on $R$, $f \in R$, and $i \in \NN$. The map $fF^e\!:R \to R$ induces a morphism $fF^e\!:\H^i_\m(R) \to \H^i_\m(R)$ on local cohomology modules, which is an $e$-th Frobenius action. Thus, the modules $\H^i_\m(R)$ are cofinite $R\lrphi{e}$-modules.

\begin{definition} Let $M$ be an $R\lrphi{e}$-module. We say that $M$ is anti-nilpotent if for any $R \lrphi{e}$-submodule $W \subseteq M$ the induced Frobenius action $\varphi_e: M/W \to M/W$ is injective.
\end{definition}

The following lemma is a natural generalization of the techniques and of some of the results obtained by Ma  \cite{MaRFmod}, regarding good cohomological properties of $F$-pure rings.

\begin{lemma} \label{generalization Linquan} Let $(R,\m,k)$ be an $F$-finite local ring, and let $\varphi_e \!:R \to R$ be an $e$-th Frobenius action on $R$. Assume that the corresponding $R$-module map $R \to \varphi_{e*}R$ splits. Then $\H^i_\m(R)$, with the induced $R\lrphi{e}$-module structure, is anti-nilpotent.
\end{lemma}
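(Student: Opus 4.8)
\textbf{Proof proposal for Lemma~\ref{generalization Linquan}.}

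The plan is to mimic Ma's argument from \cite{MaRFmod}, replacing the splitting of the ordinary Frobenius by the splitting of the map $R \to \varphi_{e*}R$ induced by our fixed $e$-th Frobenius action $\varphi_e$. First I would reduce to showing that the $R\lrphi{e}$-module $\H^i_\m(R)$ has no proper $R\lrphi{e}$-submodule on which the induced action of $\varphi_e$ is nilpotent; more precisely, anti-nilpotence is equivalent to: for every $R\lrphi{e}$-submodule $W \subseteq \H^i_\m(R)$, the map $\varphi_e\colon \H^i_\m(R)/W \to \H^i_\m(R)/W$ is injective. Since quotients of anti-nilpotent modules by $R\lrphi{e}$-submodules are again anti-nilpotent, and since the property we want is inherited along short exact sequences of $R\lrphi{e}$-modules, it suffices to prove that $\H^i_\m(R)$ itself has \emph{no nonzero} $\varphi_e$-nilpotent $R\lrphi{e}$-submodule, i.e.\ the nilpotent part of $\H^i_\m(R)$ vanishes, and then apply this to all quotients $\H^i_\m(R)/W$ — but those quotients are not a priori local cohomology of a ring, so one instead argues directly with the submodule structure.

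The key mechanism is the following. Let $\pi\colon \varphi_{e*}R \to R$ be an $R$-linear splitting of the structure map $R \to \varphi_{e*}R$; concretely $\pi$ is an additive map $R \to R$ with $\pi(\varphi_e(r) s) = r\,\pi(s)$, and $\pi(1) = 1$. Functoriality of local cohomology turns $\pi$ into an $R$-linear map $\Psi\colon \H^i_\m(\varphi_{e*}R) \cong \varphi_{e*}\H^i_\m(R) \to \H^i_\m(R)$ which splits the natural map $\H^i_\m(R) \to \varphi_{e*}\H^i_\m(R)$ adjoint to $\varphi_e$. In other words, there is an additive map $\Psi\colon \H^i_\m(R) \to \H^i_\m(R)$ with $\Psi(\varphi_e(\eta) \cdot r^{\,?}) $ appropriately $R$-semilinear and with $\Psi \circ \varphi_e = \id$ on $\H^i_\m(R)$. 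Now suppose $W \subseteq \H^i_\m(R)$ is an $R\lrphi{e}$-submodule and $\overline{\eta} \in \H^i_\m(R)/W$ satisfies $\varphi_e(\overline{\eta}) = 0$, i.e.\ $\varphi_e(\eta) \in W$ for some lift $\eta$. The splitting $\Psi$ does not immediately descend to the quotient; the standard trick is instead to iterate: $\varphi_e^m(\eta) \in W$ for all $m \geq 1$ since $W$ is a $\varphi_e$-submodule, and then applying $\Psi^m$ and using $\Psi \circ \varphi_e = \id$ forces $\eta \in \Psi^m(W) \subseteq$ (the $R$-span of $W$ under the semilinear $\Psi$). The point, exactly as in Ma's paper, is to combine this with the fact that $\Psi$ is $R$-linear and $W$ is an $R$-submodule, so $\Psi^m(W) = W$ for all $m$ (as $\Psi$ restricted to $W$ lands in $W$ once one checks $\Psi(W) \subseteq W$, which follows because $W$ is $R\lrphi{e}$-stable and $\Psi$ splits $\varphi_e$), hence $\eta \in W$ and $\overline{\eta} = 0$.

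Let me restate the backbone more carefully, since the subtle point is why $\Psi(W) \subseteq W$. One cannot expect this for an arbitrary $R$-submodule, but $W$ being an $R\lrphi{e}$-submodule means $\varphi_e(W) \subseteq W$; applying $\Psi$ and using $\Psi\varphi_e = \id_{\H^i_\m(R)}$ gives $W = \Psi(\varphi_e(W)) \subseteq \Psi(W)$, and for the reverse inclusion one uses that $\Psi$ arises from an honest $R$-module splitting of $\H^i_\m(R) \hookrightarrow \varphi_{e*}\H^i_\m(R)$ together with the Artinian (cofinite) hypothesis on $\H^i_\m(R)$: descending chains stabilize, and the chain $W \subseteq \Psi(W) \subseteq \Psi^2(W) \subseteq \cdots$ of $R$-submodules of the Artinian module $\H^i_\m(R)$ is actually an ascending chain which must also stabilize because each term has finite length bounded by $\ell(\H^i_\m(R))$ — wait, $\H^i_\m(R)$ need not have finite length; instead use that it is Artinian so the \emph{ascending} chain of submodules $\Psi^m(W)$ need not stabilize in general, so the cleanest route is to follow Ma verbatim: show that the nilpotent part $\nu(\H^i_\m(R)) := \{\eta : \varphi_e^m(\eta) = 0 \text{ for some } m\}$ is zero because $\Psi\varphi_e = \id$ already gives $\varphi_e$ injective on all of $\H^i_\m(R)$, then bootstrap to quotients using that $R \to \varphi_{e*}R$ splitting implies $R/W' \to \varphi_{e*}(R/W')$ considerations via the structure of $\H^i_\m$ as computed from a \v{C}ech complex, exploiting that cofinite $R\lrphi{e}$-modules whose ambient local cohomology has injective Frobenius are themselves forced to have injective Frobenius on every subquotient by a filtration/duality argument (Matlis dual to the statement that the Frobenius is \emph{surjective}-compatible on the corresponding $R\langle F\rangle$-module). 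The main obstacle, and the step I expect to occupy most of the work, is precisely this passage from ``injective Frobenius on $\H^i_\m(R)$'' to ``injective Frobenius on $\H^i_\m(R)/W$ for every $R\lrphi{e}$-submodule $W$'' — i.e.\ genuinely establishing anti-nilpotence and not merely the vanishing of the nilpotent part — which is where Ma's structural results on cofinite $R\langle F\rangle$-modules (e.g.\ that an $R\langle F\rangle$-module with injective Frobenius whose underlying module is a subquotient of a free/finite object is anti-nilpotent) must be invoked and carefully generalized from $F$ to $\varphi_e$.
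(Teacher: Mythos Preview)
Your proposal does not contain a proof. You correctly set up the splitting $\Psi\colon \H^i_\m(R) \to \H^i_\m(R)$ with $\Psi\circ\Phi = \id$, but then you try several approaches that you yourself recognize as failing: you attempt to show $\Psi(W)\subseteq W$ (which is false in general, and the chain argument you propose does not rescue it), then retreat to the easy fact that $\Phi$ is injective on $\H^i_\m(R)$ itself, and finally appeal to unspecified ``structural results'' of Ma to pass to quotients. That last step is exactly the content of the lemma, so nothing has been proved.

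The missing idea is the following direct argument, which is essentially Ma's and which the paper carries out. Fix $y\in\H^i_\m(R)$ with $\Phi(y)\in W$. It suffices to show that $y$ lies in the $R$-submodule $N:=R\text{-span}\langle\Phi(y),\Phi^2(y),\ldots\rangle$, since $N\subseteq W$. For each $e'\geq 0$ set $N_{e'}:=R\text{-span}\langle\Phi^{e'}(y),\Phi^{e'+1}(y),\ldots\rangle$; these form a \emph{descending} chain of $R$-submodules of the Artinian module $\H^i_\m(R)$, so $N_{e'}=N_{e'+1}$ for some $e'$, giving a relation $\Phi^{e'}(y)=\sum_{i=1}^n r_i\,\Phi^{e'+i}(y)$. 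Now use the semilinearity of the splitting in the form $\Psi(r\,\Phi(\eta))=\psi(r)\,\eta$ (where $\psi$ is the splitting at the ring level); applying $\Psi^{e'}$ to the relation yields
\[
y=\Psi^{e'}\!\bigl(\Phi^{e'}(y)\bigr)=\sum_{i=1}^n \psi^{e'}(r_i)\,\Phi^i(y)\in N\subseteq W.
\]
The two points you were missing are (i) the descending-chain use of Artinianness on the tails $N_{e'}$ (rather than an ascending chain on $\Psi^m(W)$), and (ii) the identity $\Psi(r\,\Phi(\eta))=\psi(r)\,\eta$, which is what lets $\Psi^{e'}$ unwind the relation without ever needing $\Psi(W)\subseteq W$.
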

\begin{proof}
For sake of completeness, we provide a proof here. The ideas are very similar to those used by Ma \cite{MaRFmod}. Set $\varphi:=\varphi_e$, and let $\Phi:\H^i_\m(R) \to \H^i_\m(R)$ be the map induced by $\varphi$ on local cohomology. Let $W \subseteq \H^i_\m(R)$ be a $\Phi$-stable submodule, and assume that $\Phi(y) \in W$ for some $y \in \H^i_\m(R)$. It is sufficient to show that $y \in R$-span$\langle \Phi(y),\Phi^2(y),\Phi^3(y),\ldots\rangle$, since such a module is contained in $W$ by assumption. Since $\H^i_\m(R)$ is Artinian, there exists an integer $e' \geqslant 0$ such that $\Phi^{e'}(y) \in R$-span$\langle \Phi^{e'+1}(y),\Phi^{e'+2}(y),\ldots\rangle$. Therefore, we have $\Phi^{e'}(y) = \sum_{i=1}^n r_i \Phi^{e'+i}(y)$ for some $n$ and some $r_i \in R$. By assumption, the map $R \to \varphi_*R$ has an $R$-module retraction. Identifying $\varphi_*R$ with $R$, we have additive maps $\varphi\!:R \to R$ and $\psi:R \to R$ such that $\psi \circ \varphi =$ id$_R$. By functoriality, the induced map $\Phi\!:\H^i_\m(R) \to \H^i_\m(R)$ also splits, via a map $\Psi\!:\H^i_\m(R) \to \H^i_\m(R)$ induced by $\psi$. One can check by direct computations that $\Psi(r\Phi(\eta)) = \psi(r) \eta$ for all $r \in R$ and $\eta \in \H^i_\m(R)$. We then have that 
\[
\ds y = \Psi^{e'}(\Phi^{e'}(y)) =\sum_{i=1}^n \Psi^{e'}(r_i\Phi^{e'+i}(y)) = \sum_{i=1}^n \psi^{e'}(r_i) \Phi^i(y) \in W.
\]
\end{proof}

In the rest of the section, assume that $S$ is an $F$-finite complete regular local ring, and we let $I \subseteq S$ be an ideal. Assume that $R=S/I$ is $F$-pure, and let $\m$ denote its maximal ideal. Since $S/I$ is $F$-pure, we have that $\Ext^i_S(S/I,S)$ is a root of $\H^i_I(S)$, that is, the generating morphism $\Ext^i_S(S/I,S) \to \Ext^i_S(S/I^{[p]},S)$ is an injection \cite[Theorem 2.8]{AnuragUliPure}. Let $\ck{(-)}$ denote the Matlis duality functor. For any positive integer $e$, given a cofinite $R\lrphi{e}$-module $M$, we have a filtration 
\begin{equation} \label{Lyufilt}
\ds 0 = L_0 \subseteq N_0 \subseteq L_1 \subseteq \ldots \subseteq L_t \subseteq N_t = M
\end{equation}
of $R\lrphi{e}$-modules such that, for all $0 \leq j \leq t$, $L_{j+1}/N_j$ is a non-zero simple $R\lrphi{e}$-module on which $\varphi_e$ acts injectively. In fact, this comes from a direct generalization of \cite[Theorem 4.7]{LyuFmod} to $e$-th Frobenius actions. Furthermore, for all $j$, the $R\lrphi{e}$-module $N_j/L_j$ is $\varphi_e$-nilpotent, that is, $\varphi_e^i (N_j/L_j) = 0$ for some $i >0$. Finally, the isomorphism classes of the modules $L_j/N_{j-1}$ are invariants of $M$ \cite[Theorem 4.7]{LyuFmod}.

\begin{remark} \label{Remfiltration_Fpure}
Let $\varphi_e\!:R \to R$ be an $e$-th Frobenius action, and assume that the map $R \to \varphi_{e*} R$ splits. It follows from Lemma \ref{generalization Linquan} that the filtration (\ref{Lyufilt}) reduces to a filtration
\begin{equation} \label{filtration_Fpure_gen} 
\ds 0 = L_0 \subsetneq L_1 \subsetneq \ldots \subsetneq L_t = \H^i_\m(R)
\end{equation}
of $R\lrphi{e}$-modules such that $L_{j+1}/L_j$ is a non-zero simple $R\lrphi{e}$-module, on which $\varphi_e$ acts injectively. In fact, in the filtration (\ref{Lyufilt}), we must have $L_j=N_j$ for all $j$, otherwise $\varphi_e$ would not act injectively on $\H^i_\m(R)/L_j$, contradicting the anti-nilpotency of $\H^i_\m(R)$.
\end{remark}

Let $(S,\n,\k)$ be a complete $F$-finite regular local ring of dimension $n$, and let $I \subseteq S$ be such that $R=S/I$ is $F$-pure. Let $\m = \n/I$ denote the maximal ideal of $R$. Assume that $\H^i_\m(R) \ne 0$, and let $F\!:R \to R$ denote the standard Frobenius action on $R$. For any $f \notin \cP(R)$, we have by definition that the inclusion $F^e_*f \cdot R \subseteq F^e_*R$ splits for some $e \gs 1$. This map induces an $e$-th Frobenius action $fF^e\!:\H^i_\m(R) \to \H^i_\m(R)$. By a repeated use of Lemma \ref{generalization Linquan} and Remark \ref{Remfiltration_Fpure}, we obtain a filtration 
\begin{equation} \label{filtration_Fpure} 
\ds 0 = L_0 \subsetneq L_1 \subsetneq \ldots \subsetneq L_t = \H^i_\m(R)
\end{equation}
of $R\lrphi{e}$-modules such that $L_{j+1}/L_j$ is a non-zero simple $R\lrphi{e}$-module, on which $\varphi_e$ acts injectively. An equivalent way of saying this is to say that the composition
\[
\xymatrixcolsep{5mm}
\xymatrixrowsep{2mm}
\xymatrix{
\ds L_{j+1}/L_j \ar[rr] &&  F^e_*\left(L_{j+1}/L_{j}\right) \ar[rr]^-{\cdot F^e_*f} &&  F^e_*\left(L_{j+1}/L_{j}\right)
}
\]
is injective. 
 
If we apply the Matlis duality functor to the filtration (\ref{filtration_Fpure}), we obtain a chain of $S$-modules
\[
\ds 0 = M_0 \subsetneq M_1 \subsetneq \ldots \subsetneq M_t = \Ext_S^{n-i}(R,S),
\]
which are roots for the following filtration of $F^e_S$-modules
\[
\ds 0 = \mathcal{M}_0 \subsetneq \mathcal{M}_1 \subsetneq \ldots \subsetneq \mathcal{M}_t = \H^{n-i}_I(S).
\]
Note that $M_j \cong \ck{\left(L_t/L_{t-j}\right)}$ for all $j=0,\ldots,t$, so that $M_j/M_{j-1} \cong \ck{\left(L_{t-j+1}/L_{t-j}\right)}$. In addition, the quotient $M_j/M_{j-1}$ is a root of the non-zero simple $F^e_S$-module $\mathcal{M}_j/\mathcal{M}_{j-1}$. Since it is a simple root, $M_j/M_{j-1}$ can have only one associated prime. In fact, if $P \in \Ass_S(M_j/M_{j-1})$ was not the only associated prime of such a module, then $\H^0_P(M_j/M_{j-1})$ would be the root of a non-zero proper $F^e_S$-submodule of $\mathcal{M}_j/\mathcal{M}_{j-1}$, giving a contradiction. In addition, since $M_j \subseteq \Ext_S^{n-i}(R,S)$, we have that $M_j$ is an $R$-module, since $IM_j=0$.
 
In particular, if $P \in \Ass_S(M_j/M_{j-1})$, we have that $I \subseteq P$.
Let 
\begin{equation}\label{EqLambda}
\ds \Lambda^i_{R} = \left\{ \p = P/I\in \Spec(R) \mid \{P\} = \Ass_S(M_j/M_{j-1}) \mbox{ for some } j=1,\ldots,t\right\}.
\end{equation}
By our observations, we note that $\Lambda_R^i$ only depends on $R$, and not on the presentation $S/I$. Furthermore, since the isomorphism classes of the modules $L_{t-j+1}/L_{t-j}$ are invariants of $\H^i_\m(R)$, the set $\Lambda^i_R$ is independent of the chosen filtration of $\H^i_\m(R)$.

\begin{lemma} \label{containment_in_cP}
Let $(S,\n,\k)$ be an $F$-finite regular local ring. Let $I \subseteq S$ be an ideal such that $R=S/I$ is $F$-pure. Then $\p \subseteq \cP(R)$ for all $\p \in \Lambda^i_R$.
\end{lemma}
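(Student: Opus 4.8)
The plan is to show that each $\p \in \Lambda^i_R$ is the annihilator of some simple $R\lrphi{e}$-subquotient of $\H^i_\m(R)$, and then to use the splitting hypothesis to force $\p$ into every $I_e(R)$. More precisely, fix $\p = P/I \in \Lambda^i_R$, so that $\{P\} = \Ass_S(M_j/M_{j-1})$ for some $j$, where $M_j/M_{j-1}$ is the minimal root of the simple $F^e_S$-module $\mathcal{M}_j/\mathcal{M}_{j-1}$, dual to the simple $R\lrphi{e}$-module $N := L_{t-j+1}/L_{t-j}$ appearing in the filtration (\ref{filtration_Fpure}). First I would record that since $N$ is a simple $R\lrphi{e}$-module on which $\varphi_e$ acts injectively, its annihilator as an $R$-module is a prime ideal, and by Matlis duality together with the fact that a simple root has a unique associated prime, this prime is exactly $\p$. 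So $\p N = 0$, i.e. $\p \cdot (L_{t-j+1}/L_{t-j}) = 0$ inside $\H^i_\m(R)/L_{t-j}$.

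Next I would exploit the compatibility of the Frobenius action with $\p$. Recall that for $f \notin \cP(R)$ chosen so that $F^e_*f \cdot R \subseteq F^e_*R$ splits, the composite
\[
\xymatrix{
N \ar[r] & F^e_*N \ar[rr]^-{\cdot F^e_*f} && F^e_*N
}
\]
is injective. Suppose, for contradiction, that $\p \not\subseteq \cP(R)$; then there is some $f \in \p$ with $f \notin \cP(R)$, and we may choose $e$ so that the above holds for this particular $f$. The key computation is that the displayed composite factors through multiplication by $f$ on $N$: indeed the structure map $N \to F^e_*N$ followed by $\cdot F^e_*f$ sends $\eta \mapsto F^e_*(f\eta')$ for the appropriate $\eta'$, and tracing through the identification shows the image lies in $f \cdot F^e_*N$. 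Since $f \in \p$ and $\p N = 0$, multiplication by $f$ kills $N$, so after extending scalars the composite is the zero map on the nonzero module $N$ — contradicting injectivity. Hence no element of $\p$ lies outside $\cP(R)$, i.e. $\p \subseteq \cP(R)$.

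The one point that needs care — and which I expect to be the main obstacle — is the precise bookkeeping in the previous paragraph: verifying that the composite $N \to F^e_*N \xrightarrow{\cdot F^e_*f} F^e_*N$ is literally multiplication by $f$ (up to the canonical identification $F^e_*R \cong R$ on the relevant component) when restricted to the $R$-module $N$, so that $f \in \Ann_R(N)$ genuinely forces the composite to vanish. This is the same type of identity as $\Psi(r\Phi(\eta)) = \psi(r)\eta$ used in the proof of Lemma \ref{generalization Linquan}, and it is purely formal, but one has to be scrupulous about which Frobenius twist is acting where. Once that identity is in hand, the contradiction with the injectivity statement extracted from Remark \ref{Remfiltration_Fpure} is immediate. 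Alternatively, and perhaps more cleanly, I would phrase the whole argument in the Matlis dual picture: $\p$ is the unique associated prime of the simple root $M_j/M_{j-1}$, the root injection $M_j/M_{j-1} \hookrightarrow \cF^e_S(M_j/M_{j-1})$ post-composed with the dual of $\cdot F^e_*f$ must be injective, and this map is killed upon localizing at any prime not containing $\Ann_S(M_j/M_{j-1}) + (f)$; choosing $f \in \p \setminus \cP(R)$ and localizing at $P$ gives the contradiction since $P \supseteq \Ann_S(M_j/M_{j-1})$ but $(M_j/M_{j-1})_P \neq 0$. Either route reduces Lemma \ref{containment_in_cP} to the definition of $\cP(R)$ as $\bigcap_e I_e(R)$ and the compatibility already established in (\ref{filtration_Fpure}).
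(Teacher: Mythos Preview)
Your overall strategy matches the paper's: pick $f \in \p \smallsetminus \cP(R)$, use that the Frobenius action twisted by $f$ is injective on the simple subquotient, and derive a contradiction from $f$ killing that subquotient. But there is a real gap in the first step.

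\textbf{The gap.} You assert that ``since $N$ is a simple $R\lrphi{e}$-module on which $\varphi_e$ acts injectively, its annihilator as an $R$-module is a prime ideal \ldots\ this prime is exactly $\p$, so $\p N = 0$.'' Simplicity of $N$ as an $R\lrphi{e}$-module does not force $\Ann_R(N)$ to be prime, and you give no argument for it. What is actually known is that $P$ is the unique \emph{associated} prime of the Matlis dual $M_j/M_{j-1}$; equivalently, $\sqrt{\Ann_R N} = \p$. That is strictly weaker than $\p N = 0$. Your contradiction in the second paragraph requires $fN = 0$, and for an arbitrary $f \in \p$ this need not hold. (Your ``alternative route'' has the same problem in disguise: you want to kill the map by a localization argument, but since $P \supseteq \Ann_S(M_j/M_{j-1}) + (f)$, localizing at $P$ does not kill anything.)

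\textbf{The fix, and how the paper proceeds.} The paper works on the finitely generated side with $M_j/M_{j-1}$: since $P$ is its only associated prime, $\H^0_P(M_j/M_{j-1}) = M_j/M_{j-1}$, so $P^s \cdot (M_j/M_{j-1}) = 0$ for some $s$. One then replaces the chosen element $g \in P \smallsetminus Q$ by $f = g^s$; because $\cP(R)$ is prime, $f$ is still outside $\cP(R)$, and now $f$ genuinely annihilates the module. After that, the Matlis-dual of the injective map $L_{t-j+1}/L_{t-j} \to F^e_*(L_{t-j+1}/L_{t-j}) \xrightarrow{\cdot F^e_*f} F^e_*(L_{t-j+1}/L_{t-j})$ is a surjection whose first factor is $F^e_*(f\cdot -) = 0$, forcing the module to vanish. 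This is exactly your intended contradiction, once the annihilation is arranged correctly. (A small notational point: the image of your displayed composite lies in $F^e_*(fN)$, not in $f\cdot F^e_*N$; these differ by a $p^e$-th power. The conclusion you want only needs the former.)

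Finally, note that the construction of the filtration and of $\Lambda^i_R$ is carried out under a completeness hypothesis; the lemma as stated drops that hypothesis, so you also need the reduction to the complete case, which the paper handles by tracking how the filtration and the splitting prime behave under completion.
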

\begin{proof}
First, assume that $S$ is complete. Let $P,Q \in \Spec(S)$ be such that $P/I = \p$, and $Q/I = \cP(R)$. Then $P$ is the unique associated prime of $M_j/M_{j-1}$, for some $j=1,\ldots,t$. Denote $N:=M_j/M_{j-1}$. Since $N$ is the Matlis dual of an Artinian module, it is finitely generated. Let $g \in P$, and assume that $g \notin Q$. Since $P$ is the only associated prime of $N$, we have that $\H^0_P(N) = N$. As a consequence, we can find $s \gg 0$ such that $P^s N = 0$. Let $f = g^s$, so that $f \notin Q$ and $f \cdot N = f \cdot \H^0_P(N) = 0$. Let $e \gg 0$ be an integer such that $R \to F^e_*R \stackrel{\cdot F^e_*f}{\longrightarrow} F^e_*R$ splits, and consider the induced Frobenius action on $\H^i_\m(R)$. By Lemma \ref{generalization Linquan}, and the subsequent discussion, the map
\[
\xymatrixcolsep{5mm}
\xymatrixrowsep{2mm}
\xymatrix{
\ds L_{t-j}/L_{t-j-1} \ar[rr] &&  F^e_*\left(L_{t-j}/L_{t-j-1}\right) \ar[rr]^-{\cdot F^e_*f} &&  F^e_*\left(L_{t-j}/L_{t-j-1}\right)
}
\]
is injective. Applying Matlis duality, we obtain that
\[
\xymatrixcolsep{5mm}
\xymatrixrowsep{2mm}
\xymatrix{
\ds F^e_*N  \ar[rr]^-{\cdot F^e_*f} && F^e_*N \ar[rr] && N
}
\]
is surjective. However, we are assuming that $F^e_*f \cdot F^e_*N = F^e_*(f \cdot N) = 0$, and the surjection implies that $N=0$. This is a contradiction, and concludes the proof in the complete case. 

In the general case, note that $P \widehat{S}$ is radical, because $S$ is excellent. Write $P\widehat{S} = P_1 \cap \ldots \cap P_\ell$ for some $P_1,\ldots,P_\ell$ prime ideals in $\widehat{S}$. The module $M_j/M_{j-1} \otimes_S \widehat{S} \cong \widehat{M_j/M_{j-1}}$ may not be the root of a simple $F_{\widehat{S}}$-module, but we can extend the filtration 
\[
\ds 0 \subseteq \widehat{M_0} \subseteq \ldots \subseteq \widehat{M_{j-1}} \subseteq \widehat{M_j} \subseteq \ldots \subseteq \widehat{\Ext^i_S(R,S)} \cong \Ext_{\widehat{S}}^i(\widehat{R},\widehat{S})
\]
to
\[
\ds 0 \subseteq \widehat{M_0} \subseteq \ldots \widehat{M_{j-1}} = M_{j,1} \subseteq \ldots \subseteq M_{j,i_j} = \widehat{M_j} \subseteq \ldots \subseteq \Ext_{\widehat{S}}^i(\widehat{R},\widehat{S}),
\]
in a way that $M_{j,r}/M_{j,r-1}$ is a root of a simple $F_{\widehat{S}}$-module. The primes $P_1,\ldots,P_\ell$ appear as the unique associated primes of some of such factors. By the complete case, we have that $P_1,\ldots,P_\ell \subseteq Q'$, where $Q'$ is the lift to $\widehat{S}$ of the splitting prime $\cP(\widehat{R})$. Since $\cP(\widehat{R}) = \cP(R) \widehat{R}$ \cite[Proposition 3.9]{AE}, we obtain that $Q' = Q \widehat{S} = \widehat{Q}$, and thus $P \widehat{S}  = P_1 \cap \ldots \cap P_\ell \subseteq Q \widehat{S}$. Contracting back to $S$, we finally conclude that $P \subseteq Q$.
\end{proof}
We now recall the definition of a compatible ideal, which was introduced by Schwede \cite{KarlCentersFpurity}.

\begin{definition} Let $R$ be an $F$-finite ring of prime characteristic. An ideal $I \subseteq R$ is said to be compatible if $\varphi(F^e_*I) \subseteq I$ for all $e>0$, and all $\varphi \in \Hom_R(F^e_*R,R)$.
\end{definition}

An equivalent way of formulating the compatibility of an ideal is in terms of the Cartier algebra of $R$. We now recall its definition. For an integer $e>0$ let 
\[
\ds \cC_e(R) = \{\phi \in \Hom_\ZZ(R,R) \mid r\phi(s) = \phi(r^{p^e}s)\mbox{ for all } r,s \in R\}.
\]
The Cartier algebra of $R$ is the graded $\FF_p$-algebra $\cC(R) = \bigoplus \cC_e(R)$. An ideal $I \subseteq R$ is then compatible if and only if $\phi(I) \subseteq I$ for all $\phi \in \cC_e(R)$, for all positive integers $e$. For more details about this approach we refer to \cite{BB-CartierMod}. Examples of compatible ideals include the splitting prime $\cP(R)$ and the test ideal $\tau(R)$. In addition, every associated prime of a compatible ideal is again compatible \cite[Corollary 4.8]{KarlCentersFpurity}.

\begin{thm} \label{compatibility filtration} Let $(S,\n,\k)$ be an $F$-finite regular local ring. Let $I \subseteq S$ be an ideal such that $R=S/I$ is $F$-pure. Then every $\p \in \Lambda^i_R$ is a compatible ideal.
\end{thm}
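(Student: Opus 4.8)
The plan is to reduce the statement to the containment already proven in Lemma~\ref{containment_in_cP}, combined with a standard closure property of the set of compatible ideals. Recall that the splitting prime $\cP(R)$ is itself a compatible ideal, and that by \cite[Corollary~4.8]{KarlCentersFpurity} every associated prime of a compatible ideal is again compatible. Since by Lemma~\ref{containment_in_cP} every $\p \in \Lambda^i_R$ satisfies $\p \subseteq \cP(R)$, it suffices to show that each such $\p$ is in fact an \emph{associated} prime of some compatible ideal; the natural candidate is $\cP(R)$ itself, so the crux is to prove $\p \in \Ass_R(R/\cP(R))$, equivalently $\p \in \Ass_S(S/Q)$ where $Q \subseteq S$ is the lift of $\cP(R)$.

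First I would set up the machinery in the complete case, since compatibility and the relevant filtrations are insensitive to completion (using $\cP(\widehat R) = \cP(R)\widehat R$ from \cite[Proposition~3.9]{AE}, exactly as in the proof of Lemma~\ref{containment_in_cP}). Fix $\p = P/I \in \Lambda^i_R$, so that $P$ is the unique associated prime of some filtration quotient $N := M_j/M_{j-1}$, a finitely generated $R$-module (being Matlis dual to an Artinian module) with $\H^0_P(N) = N$. The key point to extract is that $N$ carries, by construction, a compatible Frobenius structure coming from the splitting: more precisely, for every $f \notin \cP(R)$ with $e \gg 0$ such that $R \to F^e_*R \xrightarrow{\cdot F^e_*f} F^e_*R$ splits, Lemma~\ref{generalization Linquan} and the discussion following Remark~\ref{Remfiltration_Fpure} give that the dual map $F^e_*N \xrightarrow{\cdot F^e_*f} F^e_*N \to N$ is surjective. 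Iterating over a cofinal set of such $f$ and $e$, one sees that $\Ann_R(N)$ behaves like a compatible ideal: for any $\phi \in \cC_e(R)$ one checks $\phi(F^e_* \Ann_R(N)) \subseteq \Ann_R(N)$, since $\phi$ is built from maps $F^e_*R \to R$ and the surjectivity above forces any element of $R$ killing $N$ to be carried into $\Ann_R(N)$ (here one uses that $N \ne 0$ and the dual formulation of annihilation as vanishing of an induced map, exactly as in Proposition~\ref{vanishing}). Hence $\Ann_R(N)$ is a compatible ideal, and $P = \sqrt{\Ann_R(N)}$ is its unique minimal (hence associated) prime; by \cite[Corollary~4.8]{KarlCentersFpurity}, $P$—equivalently $\p$—is compatible.

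The step I expect to be the main obstacle is verifying cleanly that $\Ann_R(N)$ (or the ideal $P$ itself, whichever is packaged more smoothly) is genuinely compatible rather than merely contained in $\cP(R)$: the argument in Lemma~\ref{containment_in_cP} only extracts the containment by testing against a single well-chosen $f$, whereas compatibility requires control of \emph{all} $\phi \in \cC_e(R)$ simultaneously. The resolution should be that any $\phi \in \cC_e(R)$, restricted appropriately, corresponds to an element of $\Hom_R(F^e_*R,R)$ composed with multiplication, and the surjectivity of $F^e_*N \xrightarrow{\cdot F^e_*f} F^e_*N \to N$ for the relevant $f$'s—together with the fact that $N$ is a faithful $R/\Ann_R(N)$-module supported only at $P$—propagates to the statement $\phi(F^e_* \Ann_R(N)) \subseteq \Ann_R(N)$. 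If this turns out to be delicate, an alternative is to invoke directly that $\H^i_\m(R)$, being anti-nilpotent as an $R\langle F\rangle$-module by Lemma~\ref{generalization Linquan}, has all its $R\langle F\rangle$-submodule-quotient annihilators compatible by the Matlis-dual correspondence between $F^e_S$-submodules of $\H^{n-i}_I(S)$ and Cartier-stable structures, so that the simple factors $\mathcal M_j/\mathcal M_{j-1}$ have compatible support—this is essentially a restatement of \cite[Theorem~1.3]{AnuragUliPure} / the Cartier-module dictionary of \cite{BB-CartierMod}, and would let one conclude immediately. Finally I would handle the non-complete case by the same faithfully-flat descent used at the end of Lemma~\ref{containment_in_cP}: the primes $P_1, \dots, P_\ell$ over $P\widehat S$ are each compatible in $\widehat R$, and compatibility descends along $R \to \widehat R$ because $\Hom_R(F^e_*R,R) \otimes_R \widehat R \cong \Hom_{\widehat R}(F^e_*\widehat R, \widehat R)$, giving that $P$ is compatible in $R$.
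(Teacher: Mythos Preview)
Your proposal has a genuine gap, and in fact the initial strategy cannot work as stated. You propose to show $\p \in \Ass_R(R/\cP(R))$, but $\cP(R)$ is a \emph{prime} ideal, so $\Ass_R(R/\cP(R)) = \{\cP(R)\}$; this would force $\p = \cP(R)$, which is certainly not true for every $\p \in \Lambda^i_R$. You then pivot to arguing that $\Ann_R(N)$ is compatible, but your justification---``iterating over a cofinal set of such $f$ and $e$, one sees that $\Ann_R(N)$ behaves like a compatible ideal''---is not a proof. The surjectivity of $F^e_*N \xrightarrow{\cdot F^e_*f} F^e_*N \to N$ for elements $f \notin \cP(R)$ that generate splittings tells you something about those particular Cartier maps, not about an arbitrary $\phi \in \Hom_R(F^e_*R,R)$; there is no ``cofinal'' argument that bridges this. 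You yourself flag this as the main obstacle and offer only a vague appeal to the Cartier-module dictionary as a fallback. The descent step at the end is also unnecessary: the paper's Lemma~\ref{containment_in_cP} already handles the reduction to the complete case, but the present theorem does not need it at all.

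The paper's argument avoids all of this with a single clean move you did not consider: \emph{localize at $P$}. After localizing, the filtration survives (dropping vanishing factors), and $\p R_\p$ still lies in $\Lambda^i_{R_\p}$. Lemma~\ref{containment_in_cP} applied to $R_\p$ gives $\p R_\p \subseteq \cP(R_\p)$; since $\cP(R_\p)$ is a proper ideal of the local ring $R_\p$, the reverse containment is automatic, so $\p R_\p = \cP(R_\p)$ is compatible in $R_\p$. Now if some $\varphi \in \Hom_R(F^e_*R,R)$ satisfied $\varphi(F^e_*\p) \not\subseteq \p$, localizing would give $\varphi_\p(F^e_*(\p R_\p)) = R_\p$, contradicting compatibility of $\p R_\p$. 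That is the entire proof: the point is that compatibility is a local condition, and at $\p$ itself the containment of Lemma~\ref{containment_in_cP} becomes an equality with the splitting prime.
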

\begin{proof}
Let $P$ be a lift of $\p$ to $S$, so that $P$ is the unique associated prime of some $M_j/M_{j-1}$ appearing in the filtration. We localize at $P$, and obtain a new filtration
\[
\ds 0 \subseteq (M_0)_P \subseteq \ldots \subseteq (M_t)_P \subseteq \left(\Ext_S^i(R,S)\right)_P \cong \Ext^i_{S_P}(R_\p,S_P).
\]
We remove from the filtration the factors that become zero after localization. Note that the prime $PS_P$ is associated to the factor $(M_j)_P/(M_{j-1})_P$. By Lemma \ref{containment_in_cP}, we have that $PS_P/IS_P \cong \p R_\p \subseteq \cP(R_\p) \subseteq \p R_\p$, which forces $\p R_\p= \cP(R_\p)$. In particular, $\p R_\p$ is a compatible ideal of $R_\p$. Let $e>0$, let $\varphi \in \Hom_R(F^e_*R,R)$ and assume, by way of contradiction, that $\varphi(F^e_*\p) \not\subseteq\p$. Let $\varphi_\p$ denote the map induced by $\varphi$ on the localizations at $\p$, so that $\varphi_\p \in \Hom_{R_\p}(F^e_*R_\p,R_\p)$. Then $\varphi(F^e_*\p)R_\p = \varphi_\p\left(F^e_*(\p R_\p)\right)  = R_\p$, contradicting the assumption that $\p R_\p$ is compatible.
\end{proof}

We now present two results that follow from Theorem \ref{compatibility filtration}. The first one is the aforementioned generalization of the fact that every minimal prime over $\Supp(\Ext^i_S(R,S))$ is compatible. 
\begin{corollary} \label{AssPrimesLC} Let $(S,\n,\k)$ be an $F$-finite regular local ring, and $I \subseteq S$ be an ideal such that $R=S/I$ is $F$-pure. For any integer $i \in \NN$ such that $\H^i_I(S) \ne 0$, and for any $P \in \Ass_S(\H^i_I(S))$, the ideal $P/I \in \Spec(R)$ is compatible.
\end{corollary}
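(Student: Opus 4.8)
The plan is to deduce \Cref{AssPrimesLC} from \Cref{compatibility filtration} by showing that every associated prime of $\H^i_I(S)$ arises (up to the quotient by $I$) as one of the primes in the set $\Lambda^i_R$. Concretely, I would argue that $\Ass_S(\H^i_I(S)) = \bigcup_{j=1}^t \Ass_S(M_j/M_{j-1})$, and then invoke \Cref{compatibility filtration} to conclude that each such prime, contracted to $R$, is compatible.

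First I would recall that since $R = S/I$ is $F$-pure, we have $\Ass_S(\H^i_I(S)) = \Ass_S(\Ext^i_S(R,S))$, as already noted at the start of \Cref{SecAssPrimes} (using \cite[Corollary 2.3]{HunekeSharp} and \cite[Theorem 1.3]{AnuragUliPure}). So it suffices to understand $\Ass_S(\Ext^i_S(R,S))$. Now $\Ext^i_S(R,S) = M_t$ sits at the top of the finite filtration $0 = M_0 \subsetneq M_1 \subsetneq \ldots \subsetneq M_t$ constructed before \Cref{containment_in_cP}, and the standard fact about prime filtrations gives $\Ass_S(M_t) \subseteq \bigcup_{j=1}^t \Ass_S(M_j/M_{j-1})$. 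Each factor $M_j/M_{j-1}$, being the root of a simple $F^e_S$-module, has exactly one associated prime $P_j$, and by definition $P_j/I \in \Lambda^i_R$. Therefore every $P \in \Ass_S(\H^i_I(S))$ equals some $P_j$, and \Cref{compatibility filtration} tells us that $P_j/I$ is a compatible ideal of $R$. This is precisely the claim.

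One point that needs a sentence of care: the filtration $0 = M_0 \subsetneq \cdots \subsetneq M_t = \Ext^i_S(R,S)$ was derived under the standing assumption that $S$ is \emph{complete}, whereas \Cref{AssPrimesLC} is stated for a general $F$-finite regular local ring. To handle the general case I would pass to the completion: associated primes of $\H^i_I(S)$ contract from those of $\H^i_{I\widehat{S}}(\widehat{S})$, using that $\H^i_I(S)\otimes_S \widehat{S} \cong \H^i_{I\widehat{S}}(\widehat{S})$ and that $\widehat{S}$ is faithfully flat over $S$, together with the fact that $P\widehat{S}$ is radical since $S$ is excellent — exactly the descent argument already used in the proof of \Cref{containment_in_cP}. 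Then apply the complete case to each prime of $\widehat{S}$ lying over $P$, which is compatible over $\widehat{R}$, and use $\cP(\widehat{R}) = \cP(R)\widehat{R}$ \cite[Proposition 3.9]{AE} (or simply that compatibility descends along $R \to \widehat{R}$) to conclude $P/I$ is compatible over $R$.

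I expect the only mild obstacle to be this completion bookkeeping — matching up the filtration factors and their unique associated primes across the faithfully flat base change, and making sure the statement ``each root of a simple $F^e_S$-module has a unique associated prime'' is used correctly. Everything else is a direct citation of \Cref{compatibility filtration} plus the elementary behavior of associated primes along a finite filtration. I would therefore keep the proof short: state the filtration, note the unique-associated-prime property of each factor, cite \Cref{compatibility filtration}, and dispatch the non-complete case by the same excellence/completion argument as in \Cref{containment_in_cP}.
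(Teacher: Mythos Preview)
Your proposal is correct and follows essentially the same approach as the paper: show that every $P \in \Ass_S(\H^i_I(S))$ has $P/I$ in the relevant $\Lambda$-set, then invoke \Cref{compatibility filtration}. One small indexing slip: by the definition preceding \Cref{containment_in_cP}, the filtration of $\H^j_\m(R)$ dualizes to a filtration of $\Ext^{n-j}_S(R,S)$, so the primes associated to the factors of $\Ext^i_S(R,S)$ (the root of $\H^i_I(S)$) lie in $\Lambda^{n-i}_R$, not $\Lambda^i_R$; the paper's one-line proof records exactly this. The completion bookkeeping you flag is already absorbed into the statement and proof of \Cref{compatibility filtration} itself, so there is no need to repeat it here.
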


\begin{proof}
This follows immediately from Theorem \ref{compatibility filtration}, and the fact that $P/I \in \Lambda^{n-i}_R$ for any $P \in \Ass_S(\H^i_I(S))$.
\end{proof}

We conclude this section by showing that the ideal defining the non-Cohen-Macaulay locus of an $F$-pure equidimensional ring is compatible. Recall that, if $R$ is an $F$-finite local ring, then it admits a dualizing complex \cite[Remark 13.6]{Gabber}, and hence the non-Cohen-Macaulay locus $\nCM(R) = \{\p \in \Spec(R) \mid R_\p$ is not Cohen-Macaulay$\}$ is closed. In particular, if $R=S/I$, with $(S,\n,\k)$ a regular local ring, then $\nCM(R) = V(\a)$ is closed. In this case, further assuming that $R$ is equidimensional, it follows from local duality that
\[
\ds \a=\bigcap_{i \ne \Ht(I)} \ann_S(\Ext^i_S(R,S)).
\]

Theorem \ref{compatibility filtration} now gives that the ideal defining the non-Cohen-Macaulay locus of an equidimensional $F$-pure ring is compatible.

\begin{corollary} \label{nCMidealComp} Let $(S,\n,\k)$ be an $F$-finite regular local ring, and $I \subseteq S$ be an ideal such that $R=S/I$ is $F$-pure. Assume that $R$ is equidimensional, and let nCM$(R) = V(\a)$. Then $\sqrt{\a}$ is a compatible ideal of $R$. In particular, $R$ is Cohen-Macaulay if and only if $R_{\cP(R)}$ is Cohen-Macaulay.
\end{corollary}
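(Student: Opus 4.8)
The plan is to assemble the corollary from the explicit description of the non-Cohen-Macaulay locus recalled immediately above, together with Corollary~\ref{AssPrimesLC} and Lemma~\ref{containment_in_cP}; essentially all of the real content is already contained in those two results. First I would rewrite $\sqrt{\a}$ as an intersection of associated primes. Since $R$ is equidimensional, $\a=\bigcap_{i\ne\Ht(I)}\ann_S(\Ext^i_S(R,S))$, a finite intersection over the indices $i$ with $\Ext^i_S(R,S)\ne 0$. Radicals commute with finite intersections, and for a finitely generated $S$-module $M$ one has $\sqrt{\ann_S(M)}=\bigcap_{P}P$, the intersection being taken over the minimal primes of $\Supp_S(M)$, each of which lies in $\Ass_S(M)$. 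Since every such prime contains $I$, this exhibits $\sqrt{\a}$, viewed as an ideal of $R$, as an intersection $\bigcap P/I$ in which each $P$ belongs to $\Ass_S(\Ext^i_S(R,S))=\Ass_S(\H^i_I(S))$ for some $i\ne\Ht(I)$; recall that the equality of these two sets of associated primes was noted at the start of this section and uses the $F$-purity of $R$.

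Next I would apply Corollary~\ref{AssPrimesLC}, which gives that each $P/I$ is a compatible ideal of $R$, and then note that an arbitrary intersection of compatible ideals is again compatible: if $\phi\in\cC_e(R)$ and $\phi(J_\alpha)\subseteq J_\alpha$ for every $\alpha$, then for $x\in\bigcap_\alpha J_\alpha$ we have $\phi(x)\in J_\alpha$ for all $\alpha$, hence $\phi(x)\in\bigcap_\alpha J_\alpha$. Thus $\sqrt{\a}$ is a compatible ideal of $R$. For the final assertion, one direction is immediate, since a localization of a Cohen-Macaulay ring is Cohen-Macaulay. For the converse I would argue contrapositively: if $R$ is not Cohen-Macaulay then $\nCM(R)=V(\a)\ne\emptyset$, so the index set above is nonempty; by construction every prime $P/I$ occurring in the intersection lies in $\Lambda^{n-i}_R$, hence is contained in $\cP(R)$ by Lemma~\ref{containment_in_cP}, so $\sqrt{\a}\subseteq\cP(R)$ as an ideal of $R$, i.e.\ $\cP(R)\in V(\a)=\nCM(R)$, which says exactly that $R_{\cP(R)}$ is not Cohen-Macaulay.

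I do not expect a genuine obstacle in this argument. The only points that require some care are: invoking equidimensionality to obtain the displayed formula for $\a$; passing correctly from annihilators to minimal, hence associated, primes of the $\Ext$ modules; and keeping the index bookkeeping straight, so that the same primes $P$ feed into Corollary~\ref{AssPrimesLC} (through $\Ass_S(\H^i_I(S))$) and into Lemma~\ref{containment_in_cP} (through $P/I\in\Lambda^{n-i}_R$).
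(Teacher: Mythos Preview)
Your proof is correct and follows essentially the same route as the paper: write $\sqrt{\a}$ as a finite intersection of minimal (hence associated) primes of the modules $\Ext^i_S(R,S)$ for $i\ne\Ht(I)$, invoke the compatibility result to see each such prime is compatible, and conclude by closure under intersection. The only cosmetic difference is in the last step: the paper deduces $\sqrt{\a}\subseteq\cP(R)$ directly from the first part together with the fact that $\cP(R)$ is the largest proper compatible ideal of $R$, whereas you re-derive this containment from Lemma~\ref{containment_in_cP}; either works, but having already shown $\sqrt{\a}$ is compatible, the paper's one-line appeal is slightly cleaner.
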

\begin{proof}
We may assume that $\a$ is radical. Every prime $\p$ that is minimal over $\a$ will be minimal over $\Ext^i_S(R,S)$ for some $i$. Since associated primes of such a module always appear in $\Lambda^i_R$, Theorem \ref{compatibility filtration} yields that $\p$ is compatible. Finally, intersection of compatible ideals is compatible \cite[lemma 3.5]{KarlCentersFpurity}, hence $\a$ is compatible. The last statement follows from the fact that $\cP(R)$ is the largest compatible ideal of $R$.
\end{proof}

\section{A formula for Lyubeznik numbers of graded $F$-pure rings}

In this section, we consider standard graded algebras over a field $\k$ of positive characteristic $p>0$. By standard graded $\k$ algebra, we mean a graded algebra of finite type over $\k$, that is generated by elements of degree one. The typical example is the polynomial ring $S=\k[x_1,\ldots,x_n]$ in finitely many variables over $\k$, with $\deg(x_i)=1$ for all $i$. Moreover, any standard graded $\k$-algebra can be written as a quotient of a polynomial ring of this form by a homogeneous ideal. The condition that a standard graded $\k$ algebra is $F$-finite is equivalent to the requirement that $[\k^{1/p}:\k]<\infty$. In particular, standard graded $\k$-algebras over a perfect field are $F$-finite \cite[Lemma 1.5]{FedderFputityFsing}.

\subsection{The formula}

Given a standard graded ring $(R,\m,\k)$ and a graded $R$-module $M$, $\H^i_I(M)$ is also graded. The graded Matlis dual of a graded $R$-module $M$ can be described as the graded $R$-module $\ck{M}$, with $j$-th graded piece given by $\ck{M}_j := \Hom_{\k} \left( M_{-j}, \k \right)$ \cite[Section 3.6]{BrHe}. 
For a standard graded polynomial ring $S=k[x_1,\ldots,x_n]$, with $\m=(x_1,\ldots,x_n)$, the graded canonical module $\omega_S$ is isomorphic to $\ck{\H^n_\m(S)} \cong S(-n)$.

\begin{thm}[{Graded local duality, see  \cite[Theorem 3.6.19]{BrHe}}]
Let $S=\k\left[ x_1, \ldots, x_n \right]$ be a polynomial ring over a field $\k$, with $\deg(x_i) = 1$, and $\m = (x_1,\ldots,x_n)$. For any finitely generated graded $S$-module $M$, there is a degree-preserving isomorphism
$$\H^i_{\m}(M) \cong \ck{\left(\Ext^{n-i} \left(M, S(-n) \right) \right)}.$$
\end{thm}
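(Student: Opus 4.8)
The plan is to reduce everything to the case of a finitely generated graded free module, which is handled by an explicit \v{C}ech computation, and then to propagate the isomorphism along a finite graded free resolution. Two facts will be used repeatedly: first, that $S$ is regular, so every finitely generated graded $S$-module $M$ has a finite graded free resolution, and $\pd_S(M)\ls n$; second, that the graded Matlis dual $\ck{(-)}$ is an exact contravariant functor on finitely generated graded $S$-modules, since degree by degree it is just $\k$-linear duality. Recall also that $\omega_S \cong S(-n)$ as graded modules.

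\emph{Base case: $M$ free.} First I would prove the statement for $M=S$, and more generally for $M=F$ a finitely generated graded free module. Computing local cohomology from the \v{C}ech complex $\check{C}^\bullet(x_1,\ldots,x_n;S)$, one checks directly that $\H^i_\m(S)=0$ for $i\ne n$ (vanishing for $i<n$ is $\Depth(S)=n$, and for $i>n$ it is $\dim(S)=n$). In top degree, $\H^n_\m(S)$ is the cokernel of $\bigoplus_i S_{x_1\cdots\widehat{x_i}\cdots x_n}\to S_{x_1\cdots x_n}$, which one identifies as the graded $S$-module with $\k$-basis the ``negative monomials'' $x_1^{-a_1}\cdots x_n^{-a_n}$ with all $a_i\gs 1$; comparing graded pieces and the $S$-action yields a degree-preserving isomorphism $\H^n_\m(S)\cong\ck{S(-n)}$. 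Tensoring with $F$ and using flatness of the localizations, this gives a natural degree-preserving isomorphism $\H^n_\m(F)\cong F\otimes_S\ck{S(-n)}\cong\ck{\Hom_S(F,S(-n))}$, while $\H^i_\m(F)=0=\Ext^{n-i}_S(F,S(-n))$ for $i\ne n$. So the statement holds for free modules, naturally.

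\emph{Induction on $c=\pd_S(M)$.} The case $c=0$ is the base case. For $c>0$, choose a short exact sequence $0\to N\to F_0\to M\to 0$ of graded modules with $F_0$ free and $\pd_S(N)=c-1$. Since $\H^j_\m(F_0)=0$ for $j\ne n$, the long exact sequence for $\H^\bullet_\m$ gives degree-preserving isomorphisms $\H^i_\m(M)\cong\H^{i+1}_\m(N)$ for $i\ls n-2$, together with a four-term exact sequence
\[
0\to\H^{n-1}_\m(M)\to\H^n_\m(N)\to\H^n_\m(F_0)\to\H^n_\m(M)\to 0.
\]
Dually, applying $\Ext^\bullet_S(-,S(-n))$ and then the exact functor $\ck{(-)}$, and using $\Ext^j_S(F_0,S(-n))=0$ for $j>0$, one gets degree-preserving isomorphisms $\ck{\Ext^{n-i}_S(M,S(-n))}\cong\ck{\Ext^{n-i-1}_S(N,S(-n))}$ for $i\ls n-2$ and a four-term exact sequence
\[
0\to\ck{\Ext^1_S(M,S(-n))}\to\ck{\Hom_S(N,S(-n))}\to\ck{\Hom_S(F_0,S(-n))}\to\ck{\Hom_S(M,S(-n))}\to 0.
\]
By the inductive hypothesis applied to $N$ (giving $\H^i_\m(N)\cong\ck{\Ext^{n-i}_S(N,S(-n))}$ for all $i$), the base case applied to $F_0$, and naturality of these isomorphisms with respect to $N\to F_0$, the two four-term sequences are identified; the five lemma then yields $\H^{n-1}_\m(M)\cong\ck{\Ext^1_S(M,S(-n))}$ and $\H^n_\m(M)\cong\ck{\Hom_S(M,S(-n))}$. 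Combined with the shift isomorphisms for $i\ls n-2$, this gives $\H^i_\m(M)\cong\ck{\Ext^{n-i}_S(M,S(-n))}$ for all $i$, degree by degree.

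The step requiring the most care is the compatibility of the isomorphisms: one must verify that the isomorphism produced by the base case (and, inductively, by the previous step) is natural enough to intertwine the connecting and induced maps of the two long exact sequences, so that the five lemma applies. Equivalently, and perhaps more cleanly, one can package the argument by applying the hyper-derived functors of $\Gamma_\m$ and of $\Hom_S(-,S(-n))$ to a graded free resolution $F_\bullet$ of $M$: the associated spectral sequences degenerate because $\H^j_\m(F_k)$ vanishes for $j\ne n$ and $\Ext^j_S(F_k,S(-n))$ vanishes for $j>0$, so $\H^i_\m(M)$ is the homology of the complex $\H^n_\m(F_\bullet)$ and $\ck{\Ext^{n-i}_S(M,S(-n))}$ is the homology of $\ck{\Hom_S(F_\bullet,S(-n))}$; the base-case identification of these two complexes of modules then gives the desired isomorphism on homology. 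Either way, since all modules, maps and resolutions can be taken graded, the resulting isomorphism is degree-preserving.
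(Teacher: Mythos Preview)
Your argument is correct: the explicit \v{C}ech computation of $\H^n_\m(S)\cong\ck{S(-n)}$ followed by induction on projective dimension (or, equivalently, the hyperderived-functor/spectral-sequence packaging you describe) is a standard and complete proof of graded local duality over a polynomial ring, and your attention to naturality is exactly what is needed to make the five-lemma step go through.

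There is nothing to compare against here, however: the paper does not prove this theorem at all. It is simply quoted as background from Bruns--Herzog \cite[Theorem 3.6.19]{BrHe}, with no accompanying argument, and is used only as an input to the subsequent results in Section~4. So your proposal is not an alternative to the paper's proof but rather a self-contained justification of a result the authors take for granted.
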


\begin{thm}[\cite{WZ-projective}]\label{ThmWZ}
Let $\k$ be a perfect field of characteristic $p>0$, $S=\k\left[ x_1, \ldots, x_n \right]$,  $I$ an homogeneous  ideal in $S$. 
For any integer $e \geq 1$, the $e$-th iteration of the Frobenius map $F^e\!:R\to R$ induces a Frobenius action on 
$$\left(  \Ext^{n-i}_S \left(  \Ext^{n-j}_S \left( R, \Omega_S \right), \Omega_S \right) \right)_0$$
where $\Omega_S \cong S(-n)$ is a graded canonical module of $S$, and
$$\lambda_{i,j}(R) = \dim_\k\left( \bigcap_{e\in\NN} F^e \left(  \Ext^{n-i}_S \left(  \Ext^{n-j}_S \left( R, \Omega_S \right), \Omega_S \right) \right)_0\right).$$
\end{thm}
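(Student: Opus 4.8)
The plan is to convert the Bass-number definition $\lambda_{i,j}(R)=\dim_\k\Ext^i_S(\k,\H^{n-j}_I(S))$ into an explicit Frobenius-stable subspace, exploiting the $F$-module structure of $\H^{n-j}_I(S)$ together with graded local duality. Write $M:=\Ext^{n-j}_S(R,S)$: this is a generator of the $F$-finite $F$-module $\H^{n-j}_I(S)$, via the generating morphism $\beta\colon M\to\cF_S(M)\cong\Ext^{n-j}_S(S/I^{[p]},S)$ induced by $S/I^{[p]}\twoheadrightarrow S/I$ together with exactness of $\cF_S$ over the regular ring $S$, so that $\H^{n-j}_I(S)$ is the direct limit of $M\xrightarrow{\beta}\cF_S(M)\xrightarrow{\cF_S(\beta)}\cF_S^2(M)\to\cdots$. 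By Lyubeznik's theory of $F$-finite $F$-modules, each $\H^i_\m(\H^{n-j}_I(S))$ is again $F$-finite, $\m$-torsion, and hence an injective $S$-module, i.e. a finite direct sum of copies of the graded injective hull $E_S(\k)$; thus the Grothendieck spectral sequence $\Ext^p_S(\k,\H^q_\m(-))\Rightarrow\Ext^{p+q}_S(\k,-)$ degenerates and yields the cleaner identity $\lambda_{i,j}(R)=\dim_\k\Hom_S\!\big(\k,\H^i_\m(\H^{n-j}_I(S))\big)$. This reformulation is what ultimately isolates a single graded strand.

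Next I would compute $\H^i_\m(\H^{n-j}_I(S))$ through its generator: since $\H^i_\m(-)$ commutes with $\cF_S$ and with filtered colimits, $\H^i_\m(\H^{n-j}_I(S))$ is the direct limit of $\cF_S^e\!\big(\H^i_\m(M)\big)$ along the maps induced by $\beta$. Applying graded Matlis duality $\ck{(-)}$ (which turns this colimit into an inverse limit), graded local duality $\ck{\H^i_\m(N)}\cong\Ext^{n-i}_S(N,\Omega_S)$ for finitely generated graded $N$, the elementary identification $\Ext^{n-i}_S(\cF^e_S(N),S)\cong\cF^e_S\big(\Ext^{n-i}_S(N,S)\big)$ (obtained by feeding a graded free resolution of $N$ through the exact functor $\cF^e_S$, which fixes free modules and commutes with $\Hom$ into them), and noting that the two shifts coming from $\Omega_S\cong S(-n)$ cancel in the double $\Ext$, one arrives at
\[
\ck{\H^i_\m(\H^{n-j}_I(S))}(n)\;\cong\;\varprojlim_e\cF^e_S(V),\qquad V:=\Ext^{n-i}_S\!\big(\Ext^{n-j}_S(R,\Omega_S),\Omega_S\big),
\]
with transition maps $\cF^{e+1}_S(V)\to\cF^e_S(V)$ dual to those induced by $\beta$. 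The left-hand side is, up to twist, a direct sum of $\lambda_{i,j}(R)$ copies of $E_S(\k)$, and the constraint that an $\m$-torsion graded $F$-module must have all its $E_S(\k)$-summands carrying the same internal twist forces that common twist, so the degree-zero strand of $\varprojlim_e\cF^e_S(V)$ has dimension $\lambda_{i,j}(R)$. It then remains to recognise $\big(\varprojlim_e\cF^e_S(V)\big)_0=\varprojlim_e\big(\cF^e_S(V)\big)_0$ as $\bigcap_e F^e(V_0)$: since $\k$ is perfect, each transition map $\big(\cF^{e+1}_S(V)\big)_0\to\big(\cF^e_S(V)\big)_0$ is, after the natural identification, the $e$-th iterate of a single $p$-linear endomorphism of $V_0$, which one checks is precisely the Frobenius action induced by $F\colon R\to R$; and for a finite-dimensional $\k$-vector space $W$ with a $p$-linear endomorphism $\varphi$, the chain $W\supseteq\varphi(W)\supseteq\varphi^2(W)\supseteq\cdots$ stabilizes, $\varphi$ restricts to a bijection on the stable subspace, and $\varprojlim(W\xleftarrow{\varphi}W\xleftarrow{\varphi}\cdots)$ maps isomorphically onto $\bigcap_e\varphi^e(W)$. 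Combining these gives $\lambda_{i,j}(R)=\dim_\k\bigcap_e F^e(V_0)$.

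The main obstacle is the bookkeeping in the middle step: one must verify, degree by degree, that the generating morphism $\beta$ of $\H^{n-j}_I(S)$ dualizes under graded local duality exactly to the $p$-linear Frobenius action of the statement on $V_0$, and that the many internal-degree rescalings (each $\cF_S^e$ multiplies degrees by $p^e$, and $\cF_S(\Omega_S)\cong S(-np)\ne\Omega_S$) line up so that the contributing strand is the degree-zero one. A secondary point requiring care is the appeal to Lyubeznik's theory --- $F$-finiteness and injectivity of $\H^q_\m$ of an $F$-finite $F$-module, and degeneration of the spectral sequence --- which one could circumvent by working directly with $\varinjlim_e\Ext^i_S(\k,\cF^e_S(M))$, at the cost of obscuring why the final formula collapses onto a single degree. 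Note that, unlike in Theorem \ref{MainFormula}, no $F$-purity hypothesis enters here: the Frobenius action $F$ on $V_0$ need not be bijective, which is precisely why the stabilization $\bigcap_e F^e(V_0)$ --- rather than $V_0$ itself --- appears.
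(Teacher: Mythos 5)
This result is cited in the paper as \cite{WZ-projective}; the paper does not supply its own proof, so there is nothing internal to compare your argument against. Your high-level outline is the standard one for Zhang's theorem and the main ingredients are correctly assembled: the Bass-number reformulation $\lambda_{i,j}(R)=\dim_\k\Hom_S(\k,\H^i_\m(\H^{n-j}_I(S)))$, the expression of $\H^i_\m(\H^{n-j}_I(S))$ as $\varinjlim_e\cF^e_S(\H^i_\m(M))$ with $M=\Ext^{n-j}_S(R,S)$, graded local duality, and the commutation of $\Ext$ with the Frobenius functor. Your ``same internal twist'' step is the graded Eulerian property of Ma--Zhang (an Eulerian $F$-module supported at $\m$ is $\H^n_\m(S)^{\oplus\lambda}$ with no shift ambiguity), and it would be worth naming, since without it the degree-zero strand need not capture $\lambda_{i,j}(R)$.

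There is, however, a genuine gap in your final reduction. You assert that after ``the natural identification'' each $\bigl(\cF^{e}_S(V)\bigr)_0$ is $V_0$, so that the transition maps in the inverse system become iterates of one $p$-linear endomorphism of $V_0$. No such identification exists: $\cF^e_S$ rescales internal degrees by $p^e$, and $\dim_\k\bigl(\cF^e_S(V)\bigr)_0$ is in general strictly larger than $\dim_\k V_0$ (for example $V=\k(1)=S/\m\,(1)$ over $S=\k[x,y]$ has $V_0=0$ but $\bigl(\cF_S(V)\bigr)_0=\bigl(S/\m^{[p]}\bigr)_p$ of dimension $p-1$; negative degrees of $V$ contribute to degree zero after applying $\cF^e_S$). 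The correct way to close the argument is a two-sided inclusion on the stable submodule $\underline{V}:=\bigcap_e\operatorname{im}\bigl(\cF^e_S(V)\to V\bigr)$: since $\underline{V}\cong S^{\oplus\lambda_{i,j}(R)}$ is free and generated in degree $0$, and the Cartier/Frobenius operator restricts to a bijection on it (Hartshorne--Speiser--Lyubeznik--Gabber stabilization), one gets $\underline{V}_0\subseteq\bigcap_e F^e(V_0)$; conversely $F^e(V_0)\subseteq\bigl(\operatorname{im}(\cF^e_S(V)\to V)\bigr)_0$ because that image is the $S$-span of $\kappa^e(V)$ and $\kappa^e(V_0)=F^e(V_0)$, giving $\bigcap_e F^e(V_0)\subseteq\underline{V}_0$. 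As you wrote it, the degree-zero bookkeeping does not go through.
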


\begin{remark}

Since $\Omega_S \cong S(-n)$,  there is a graded isomorphism
	$$ \Ext^{n-i}_S \left(  \Ext^{n-j}_S \left( R, \Omega_S \right), \Omega_S \right)  \cong  \Ext^{n-i}_S \left(  \Ext^{n-j}_S \left( R, S \right), S \right).
	$$
In particular, we obtain that
	$$\lambda_{i,j}(R) = \dim_\k\left( \bigcap_{e\in\NN} F^e \left(  \Ext^{n-i}_S \left(  \Ext^{n-j}_S \left( R, S \right), S \right) \right)_0\right).$$
\end{remark}

\begin{remark}\label{RemFrobInjSurj}
Let $\k$ be a perfect field of characteristic $p>0$, and let $V$ be a finite dimensional $\k$-vector space   with a Frobenius action.  
We note that $F_* \k\cong \k,$ because $\k$ is a perfect field.
Since $V=\oplus \k,$ we deduce that $F_* V\cong \oplus F_* \k \cong V$.
If $F:V\to V$ is injective, then the induced map $V\to F_* V$ is an injective $\k$-linear map. Since $V\cong F_* V$,  $V\to F_* V$ is also surjective. Then, $F:V\to V$ is an isomorphism. 
\end{remark}

\begin{thm}\label{lyu formula}
Let $\k$ be a field of characteristic $p>0$, $S=\k\left[ x_1, \ldots, x_n \right]$, and $I$ a homogeneous ideal in $S$ with $R = S/I$ $F$-pure. Then
$$\lambda_{i,j}(R) = \dim_\k \left( \Ext^{n-i}_S \left(  \Ext^{n-j}_S \left( R, S \right), S \right) \right)_0 .$$
\end{thm}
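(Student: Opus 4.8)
The plan is to reduce the statement to Theorem~\ref{ThmWZ}, which expresses $\lambda_{i,j}(R)$ as the dimension of the stable image $\bigcap_e F^e(N_0)$ of the Frobenius action on the finite-dimensional $\k$-vector space
\[
N_0 := \left( \Ext^{n-i}_S\left( \Ext^{n-j}_S(R,S), S \right) \right)_0,
\]
and then argue that under the $F$-purity hypothesis this Frobenius action is already injective, so that $\bigcap_e F^e(N_0) = N_0$ by Remark~\ref{RemFrobInjSurj}. Granting that, we would be done: $\lambda_{i,j}(R) = \dim_\k N_0$, which is exactly the claimed formula (using the graded isomorphism $\Ext^{n-i}_S(\Ext^{n-j}_S(R,S),S) \cong \Ext^{n-i}_S(\Ext^{n-j}_S(R,\Omega_S),\Omega_S)$ up to a degree shift, which does not affect the degree-zero part once one is consistent). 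One technical point to handle at the outset: Theorem~\ref{ThmWZ} as stated assumes $\k$ perfect, whereas here $\k$ is only required to be such that $R$ is $F$-pure (hence $F$-finite, so $[\k^{1/p}:\k] < \infty$). I would deal with this by a base change to $\k^{1/p^\infty}$ or, more cleanly, by noting that $F$-purity of $R$ and the formation of the relevant $\Ext$-modules and their degree-zero parts are compatible with the faithfully flat extension $\k \to \k^{1/p^e}$, so the general $F$-finite case follows from the perfect case; alternatively one works directly with $\varphi_e$-actions for a fixed splitting $e$ as in Section~\ref{SubSecFmodRFmod}, but the perfect-field reduction is simplest.

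\textbf{Key steps in order.} First, identify the Frobenius action. By graded local duality, $\left(\Ext^{n-j}_S(R,S)\right)$ is (a shift of) the graded Matlis dual of $\H^j_\m(R)$, and iterating, $N_0$ is the degree-zero piece of a module that is Matlis-dual to an iterated local cohomology built from $\H^j_\m(R)$; alternatively, as recorded in the excerpt, $\H^{n-j}_I(S)$ is an $F_S$-module with root $\Ext^{n-j}_S(R,S)$ because $R$ is $F$-pure (\cite[Theorem 2.8]{AnuragUliPure}), and then $\Ext^{n-i}_S(-,S)$ applied to this root, followed by taking degree zero, carries a natural Frobenius action — this is precisely the action appearing in Theorem~\ref{ThmWZ}. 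Second, Matlis-dualize: the degree-zero part $N_0$ is dual (as a $\k$-vector space with Frobenius action, $\k$ perfect) to the degree-zero part of a cofinite $R\langle F\rangle$-module of the form $\H^i_\m\!\left(\text{something built from }\H^j_\m(R)\right)$; more precisely, $\ck{N_0}$ is the top-socle-type piece of an iterated local cohomology of $R$, which carries an $R\langle F\rangle$-structure. Third, apply anti-nilpotency: since $R$ is $F$-finite and $F$-pure, $\H^i_\m(R)$ (and the iterated local cohomology modules obtained from it, by the generalization of Ma's results via Lemma~\ref{generalization Linquan} and Remark~\ref{Remfiltration_Fpure}) are anti-nilpotent as $R\langle F\rangle$-modules; in particular the Frobenius action on the whole module, and hence on its finite-dimensional degree-zero piece, is \emph{injective}. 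Fourth, by Remark~\ref{RemFrobInjSurj}, an injective Frobenius action on a finite-dimensional vector space over a perfect field is bijective, so $\bigcap_e F^e(N_0) = N_0$, and Theorem~\ref{ThmWZ} gives $\lambda_{i,j}(R) = \dim_\k N_0$. Fifth, remove the perfectness hypothesis by faithfully flat base change $\k \to \k^{1/p^e}$ (or $\k \to \overline{\k}$), using that $\dim_\k\left(\Ext^{n-i}_S(\Ext^{n-j}_S(R,S),S)\right)_0$ is preserved, that $F$-purity ascends along this extension, and that Lyubeznik numbers are insensitive to such separable-closure-type base change.

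\textbf{Main obstacle.} The crux is Step three: verifying that the relevant Frobenius action is genuinely injective. The action in Theorem~\ref{ThmWZ} is not literally the action on $\H^i_\m(R)$ but the action on an \emph{iterated} construction $\Ext^{n-i}_S(\Ext^{n-j}_S(R,S),S)$, equivalently on $\H^i_\m$ of a root of $\H^{n-j}_I(S)$ after appropriate dualizations. One must check that passing from $\H^j_\m(R)$ to this iterated module preserves the anti-nilpotency (or at least the injectivity of Frobenius on degree zero). The cleanest route is: because $R$ is $F$-pure, $\Ext^{n-j}_S(R,S)$ is a root of the $F_S$-module $\H^{n-j}_I(S)$, and applying the Frobenius-compatible functor $\Ext^{n-i}_S(-,S)$ — which commutes with the action of Frobenius by \cite[Proposition 5.7 and Lemma 5.8]{W}, as recorded just before Proposition~\ref{vanishing} — produces a module whose Frobenius action, when restricted to the finite-dimensional degree-zero part, is injective because the composite functor is obtained from $\H^i_\m(R)$-type data on which Frobenius acts injectively by Remark~\ref{Remfiltration_Fpure}. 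Making this chain of identifications precise — tracking which module the Frobenius acts on and why injectivity survives each dualization and each application of $\Ext_S(-,S)$ — is the main technical content; once it is in place, the rest (Remark~\ref{RemFrobInjSurj} plus Theorem~\ref{ThmWZ}) is immediate.
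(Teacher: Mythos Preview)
Your overall architecture is exactly the paper's: reduce to a perfect (in fact algebraically closed) field, show that the Frobenius action on $N_0=\left(\Ext^{n-i}_S(\Ext^{n-j}_S(R,S),S)\right)_0$ is injective, invoke Remark~\ref{RemFrobInjSurj} to upgrade to bijectivity, and then read off the result from Theorem~\ref{ThmWZ}. The base-change step and the endgame are fine.

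Where you diverge is precisely at what you call the ``main obstacle,'' and here you are making life much harder than necessary. You propose to pass through Matlis duality, relate $N_0$ to iterated local cohomology of $R$, and then import anti-nilpotency from Lemma~\ref{generalization Linquan}. That route is not obviously wrong, but carrying the Frobenius action faithfully through two applications of graded local duality and an $\H^i_\m$ is genuinely delicate, and you have not actually done it. The paper bypasses this entirely with a one-line functorial argument: the Frobenius action on $N_0$ is, by construction, the map obtained by applying the composite functor $\left(\Ext^{n-i}_S(\Ext^{n-j}_S(-,S),S)\right)_0$ to the inclusion $R\hookrightarrow F_*R$; since each $\Ext^\bullet_S(-,S)$ commutes with Frobenius, this is a map $N_0\to F_*N_0$. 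Now $R\to F_*R$ \emph{splits} by $F$-purity, and any additive functor sends a split monomorphism to a split monomorphism, so $N_0\to F_*N_0$ is injective. No anti-nilpotency, no Matlis duality, no tracking of iterated local cohomology is needed. Once you see this, your ``main obstacle'' evaporates and the proof is complete exactly as you outlined.
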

\begin{proof}
Let $\overline{\k}$ denote the algebraic closure of $\k.$
We note that if $R$ is $F$-pure, then $R\otimes_\k \overline{\k}$ is also $F$-pure.
We also observe that 
$\lambda_{i,j}(R) = \lambda_{i,j}(R\otimes_\k \overline{\k})$
and 
$$
\dim_\k \left( \Ext^{n-i}_S \left(  \Ext^{n-j}_S \left( R, S \right), S \right) \right)_0  =
\dim_{\overline{\k}} \left( \Ext^{n-i}_{S\otimes_\k \overline{\k}} \left(  \Ext^{n-j}_{S\otimes_\k \overline{\k}} \left( R
\otimes_\k \overline{\k}, S \otimes_\k \overline{\k}
 \right), S\otimes_\k \overline{\k} \right) \right)_0.
$$
Then, we may assume without loss of generality that $\k$ is algebraically closed.
We set 
$$N=\left( \Ext^{n-i}_S \left(  \Ext^{n-j}_S \left( F_*R, S \right), S \right) \right)_0.$$
We note that, since $\Ext^i_S(-, S)$ commutes with the action of Frobenius,
$$
\left( \Ext^{n-i}_S \left(  \Ext^{n-j}_S \left( F_*R, S \right), S \right) \right)_0\cong F_* N
$$
The Frobenius action on $N$ is induced by the inclusion map $R\to F_* R.$ Then, this must correspond to $N\to F_* N,$ which splits as   $R\to F_* R$ does.
Then,  $N\to F_* N$ is injective. As a consequence, the Frobenius 
map $F:N\to N$ is injective. Then, by Remark \ref{RemFrobInjSurj}, $F:N\to N$ is also surjective, and it follows from Theorem \ref{ThmWZ} that
$$
\lambda_{i,j}(R) = \dim_\k \left( \bigcap_{e\in\NN} F^eN\right) =\dim_\k(N).
$$
\end{proof}

\subsection{Examples of Lyubeznik numbers}\label{SubSecExamples}
In this subsection we use Theorem \ref{lyu formula} to compute the Lyubeznik numbers of various $F$-pure graded rings. As our formula was already known for Stanley-Reisner rings, we focus on other graded rings which are also $F$-pure.

\begin{notation}
Given a $d$-dimensional local ring  $(R,\m,\k)$ containing a field, or a standard graded $\k$-algebra, we collect all its Lyubeznik numbers in a table $\Lambda(R)=(\lambda_{i,j}(R))_{0 \leq i,j \leq d}$. 
\end{notation}
Suppose that $d=\dim(R).$
We recall that  $\lambda_{i,j}(R)=0$ for $i<j$, $i>d$ and $j>d$ \cite[4.4]{LyuDMod}. Then, the Lyubeznik Table is upper triangular. In addition, $\lambda_{d,d}(R)\neq 0$.  

\begin{notation}
Let   $(R,\m,\k)$ be $d$-dimensional local ring  containing a field.  Then,
\begin{itemize}
\item $\lambda_{d,d}(R)$ is called the highest Lyubeznik number of $R$. 
\item We say that $\Lambda(R)$ is trivial if $\lambda_{d,d}(R)=1$ and all the other Lyubeznik numbers vanish.
\end{itemize}
\end{notation}

It is well-known that Cohen-Macaulay rings in prime characteristic always have a trivial Lyubeznik table. In fact, this property remains true for sequentially Cohen-Macaulay rings. For this reason, we focus on rings that are not Cohen-Macaulay. Since there are already formulas and algorithms to compute Lyubeznik numbers for Stanley-Reisner rings \cite{YStr,Montaner,AMV}, we do not provide examples defined by monomials ideals.

We first include rings defined by binomial edge ideals. Given a simple graph $G$ with vertex set $\{1,\ldots,n\},$ the binomial edge ideal, $J_G\subseteq S=K[x_1,\ldots,x_n,y_1,\ldots,y_n]$,
associated to $G$ is defined by
$$
J_G= \left( x_iy_j-x_jy_i \;| \;  \{i,j\}\in G\right)\subseteq  S.
$$
For a study of $F$-purity of binomial edge ideals, we refer to the work of  Matsuda \cite{Matsuda}.

\begin{example}
The binomial edge ideal of the $5$-cycle defines an $F$-pure ring in characteristics $3$, $5$ and $7$ \cite[Example 2.7]{Matsuda}. These are not Cohen-Macaulay, because its vertex connectivity is equal to $2$ \cite{BNB17}. However, the Lyubeznik table is trivial.
\end{example}

We now compute a few example defined by binomial ideals via a semigroup. By the work of Hochster \cite{MelMonomials}, normal affine  semigroup  algebras are Cohen-Macaulay. For this reason, we discuss here only weakly normal semigroup algebras, which are still $F$-pure \cite[Proposition 5.3]{Yasuda}.

\begin{example}

Take the complete bipartite graph in $3$ and $5$ vertices, $K_{3,5}$. The following is the Lyubeznik table of its binomial edge ideal in characteristic $101$:
$$ \Lambda(S/J_{K_{3,5}}) = \bgroup\makeatletter\c@MaxMatrixCols=11\makeatother\begin{pmatrix}0&
      0&
      0&
      0&
      0&
      0&
      0&
      0&
      0&
      0&
      0\\
      &
      0&
      0&
      0&
      0&
      0&
      0&
      0&
      0&
      0&
      0\\
      &
      &
      0&
      0&
      0&
      0&
      0&
      0&
      0&
      0&
      0\\
      &
      &
      &
      0&
      0&
      0&
      0&
      0&
      0&
      0&
      0\\
      &
      &
      &
      &
      0&
      1&
      0&
      0&
      0&
      0&
      0\\
      &
      &
      &
      &
      &
      0&
      0&
      0&
      0&
      0&
      0\\
      &
      &
      &
      &
      &
      &
      1&
      1&
      0&
      0&
      0\\
      &
      &
      &
      &
      &
      &
      &
      0&
      0&
      0&
      0\\
      &
      &
      &
      &
      &
      &
      &
      &
      0&
      0&
      0\\
      &
      &
      &
      &
      &
      &
      &
      &
      &
      1&
      0\\
      &
      &
      &
      &
      &
      &
      &
      &
      &
      &
      1\\
      \end{pmatrix}\egroup.$$
\end{example}

\begin{example}[Bertin's example]\label{Bertin}
	Let $R = \mathbb{F}_2[x_1, x_2, x_3, x_4]$, and let $R^G$ denote the ring of invariants of the cyclic group $G = \ZZ/4$ on the variables of $R$. This is a famous example of Marie-Jos\'{e} Bertin \cite{Bertin} of a ring of invariants that is not Cohen-Macaulay, and it can be written as a quotient of $\mathbb{F}_2[y_1, \ldots, y_8]$. Macaulay2 computations show that $\lambda_{1,3} \left( R^G \right) = 1$, so the Lyubeznik table is non-trivial.
		
\end{example}

We now provide an example to show that $F$-purity is needed for the formula of Theorem~\ref{lyu formula}.
\begin{example}[{See  \cite[Example 2.3]{AnuragUliSegre}}]\label{AnuragUli}
	Let $R = \k[u,v,w,x,y,z]$ and consider the ideal
\[
I = (u^3+v^3+w^3, u^2x+v^2y+w^2z, x^2u+y^2v+z^2w,
    x^3+y^3+z^3, vz-wy, wx-uz, uy-vx).
    \]
It can be checked with Macaulay2 that $R/I$ is not $F$-pure, and that 
\[
\dim_\k\left(\Ext^6_R(\Ext^4_R(R/I,R),R)_0\right) = 1.
\]
However, $\lambda_{0,2}(R/I)=0$, since if $H^4_I(R)=0$ \cite[Example 2.3]{AnuragUliSegre}.
\end{example}

\section{Lyubeznik numbers of projective varieties}
Let $X$ be a projective variety over a field $\k$ of positive characteristic $p$. Then $X$ can be embedded in a projective space $\PP^n_\k=\Proj(S)$, where $S=\k[x_0,\ldots,x_n]$. Under this embedding, $X$ can be represented as $\Proj(R)$, where $R=S/I$ for some homogeneous ideal $I$.
The Lyubeznik number of $X$ with respect to $i,j$ is defined as
$$\lambda_{i,j} (X) := \dim_{\k} \Ext^i_S \left(\k, \H^{n-j}_I (S) \right).$$
This definition depends only on $X$, $i$ and $j$, meaning that it does not depend on the embedding of $X$ in $\PP^n_\k$ \cite{WZ-projective}.
In this section, we seek to relate the numbers $\lambda_{i,j} (X) $ with the geometry of $X$.

\subsection{Lyubeznik numbers that measure connectedness}
We start our study of $\lambda_{i,j} (X) $ focusing on $\lambda_{0,1} (X)$. The Hartshorne-Lichtenbaum Vanishing Theorem \cite{HartCD}, and the Second Vanishing Theorem for local cohomology over a local ring \cite{HartCD,O,P-S,H-L} imply that this number relates to the number of connected components of the punctured spectrum. As a natural extension, we verify a similar relation for projective varieties. We point out that  Theorem \ref{ThmHighest} may be already known to experts. However, to the best of our knowledge, this result has not been recorded explicitly in the literature.
We start with a key lemma.

\begin{lemma}\label{LemmaNumberComp}
Let $X$ be a projective variety over an algebraically closed field $\k$ of characteristic $p>0$. Suppose that $X$ has $t$ connected components, which are all of dimension at least $1$. Then, $\lambda_{0,1}(X)=t-1$.
\end{lemma}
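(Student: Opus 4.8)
The natural approach is to realize $\lambda_{0,1}(X)$ as a Bass number of a suitable local cohomology module over the cone and then extract connectedness information from the dimension of that module. Write $X = \Proj(R)$ with $R = S/I$, $S = \k[x_0,\ldots,x_n]$, and let $\m = (x_0,\ldots,x_n)$ be the irrelevant ideal, with $C = \Spec(R)$ the affine cone over $X$. First I would recall the comparison between the graded local cohomology $\H^j_I(S)$ and the (graded) local cohomology $\H^j_\m(R)$ via graded local duality, so that $\lambda_{0,j}(X) = \dim_\k \bigl(\H^{n-j}_I(S)^\vee\bigr)_0 = \dim_\k \bigl(\Ext^{n-j}_S(R,S)\bigr)_0$ up to a degree shift, and in particular $\lambda_{0,1}(X)$ is governed by $\H^{n-1}_I(S)$, equivalently by $\H^2_\m(R)$. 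The key point is that since all connected components of $X$ have dimension at least $1$, we have $\dim C \geq 2$ and the punctured spectrum of $C$ (the cone minus its vertex) is connected if and only if $X$ is connected; more precisely the punctured spectrum of $C$ has exactly $t$ connected components.

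**Main step.** The heart of the argument is the Second Vanishing Theorem / Hartshorne's connectedness results: for the completed local ring $(\widehat{R},\m)$, the number of connected components of the punctured spectrum $\Spec(\widehat{R}) \setminus \{\m\}$ equals $1 + \dim_\k \bigl(\text{socle of a suitable } \H^\bullet\bigr)$, and this socle dimension is exactly the Lyubeznik number $\lambda_{0,1}$. Concretely, I would invoke the standard fact (due to the Hartshorne–Lichtenbaum and Second Vanishing Theorems, as cited in the excerpt) that if $A$ is a complete local domain-like ring, or more generally the completion of a graded ring at its irrelevant ideal with $\dim A \geq 2$, then $\lambda_{0,1}$ counts (one less than) the number of connected components of the punctured spectrum. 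Since passing to $\Proj$ identifies connected components of $X$ with connected components of the punctured spectrum of the cone (each component of $X$ having dimension $\geq 1$ ensures it contributes a genuinely positive-dimensional, hence connected-through-the-vertex-only, piece), we get $\lambda_{0,1}(X) = t - 1$.

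**Where the work lies.** The main obstacle is bookkeeping the precise relationship between connected components of $X$ and those of the punctured spectrum of the cone, together with tracking the degree-zero part correctly under graded local duality — one must be careful that $\lambda_{0,1}(X)$ is defined as the full (ungraded) $\k$-dimension of $\Ext^0_S(\k,\H^{n-1}_I(S))$ rather than a graded piece, so I would either cite Zhang's well-definedness result \cite{WZ-projective} to reduce to the cone computation or argue directly that the relevant $\Ext$ is concentrated in degree zero. A secondary point is the hypothesis that $\k$ is algebraically closed, which guarantees that the connected components are geometrically connected, so that "connected component of $X$" is the right count; this is used implicitly when we say the punctured spectrum has exactly $t$ components. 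Once these identifications are in place, the conclusion $\lambda_{0,1}(X) = t-1$ follows directly from the cited vanishing theorems, with no further computation needed.
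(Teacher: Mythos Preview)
Your overall strategy---reduce to the known local statement that for a complete local ring $(R,\m)$ of dimension $\geq 2$, the Lyubeznik number $\lambda_{0,1}(R)$ equals one less than the number of connected components of the punctured spectrum, and then identify those components with the connected components of $X$ via the cone---is sound and is a legitimate alternative to the paper's argument. The paper instead proves the statement directly by induction on $t$: the base case $t=1$ is the Second Vanishing Theorem, and the inductive step uses the Mayer--Vietoris sequence for $J=J_1\cap\cdots\cap J_{t-1}$ and $J_t$ together with Hartshorne--Lichtenbaum vanishing to conclude that $\H^n_I(S)\cong E_S(\k)^{\oplus(t-1)}$. In effect the paper is reproving, in this graded context, exactly the ``standard fact'' you want to quote, so your route is shorter but less self-contained; the paper's route makes the mechanism (Mayer--Vietoris plus the two vanishing theorems) completely explicit.

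That said, your write-up has a few genuine slips that you should fix. First, an indexing error: with $S=\k[x_0,\ldots,x_n]$ one has $\dim S=n+1$, so $\lambda_{0,1}(X)=\dim_\k\Hom_S(\k,\H^{n}_I(S))$, not $\H^{n-1}_I(S)$. Second, your appeal to graded local duality is misplaced: local duality relates $\H^j_\m(R)$ to $\Ext^{(n+1)-j}_S(R,\omega_S)$, but it does \emph{not} identify $\H^{n-j}_I(S)^\vee$ with $\Ext^{n-j}_S(R,S)$, and in any case this identification plays no role in the punctured-spectrum argument you actually want to run; you should drop that paragraph. Third, the phrase ``I would invoke the standard fact'' needs a precise citation (e.g.\ Walther's work relating $\lambda_{0,1}$ to connectedness of the punctured spectrum, or the equivalent statement extracted from the Second Vanishing Theorem over the completion), since that is where all the content lives in your approach.
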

\begin{proof}
Let $S=\k[x_0,\ldots,x_n]$ and $I \subseteq \m=(x_0,\ldots,x_n)$, so that $X=\Proj(S/I)$. Fix ideals $J_1,\ldots,J_t$ of $S$ for which $\cV(J_1), \ldots, \cV(J_t)$ are precisely the connected components of $X$.
We proceed by induction on $t$.
If $t=1,$ then $\H^n_{J_1}(S)=0$ by the Second Vanishing Theorem for local cohomology.
Then, $\lambda_{0,1}(X)=\dim_\k\Hom_S(\k,\H^n_{J_t}(S))=0.$

We assume the claim for $t-1,$ and prove it for $t$.
Let $J=J_1\cap\ldots\cap J_{t-1}.$
The Mayer-Vietoris sequence associated to $J$ and $J_t$ is
\[
\cdots \to \H^{n}_{J+J_t}(S)\to \H^{n}_{J}(S)\oplus \H^{n}_{J_t}(S)\to \H^{n}_{I}(S)\to \H^{n+1}_{J+J_t}(S)\to 0.
\]
Since the dimension of every irreducible component of $X$ is at least one, we have that every minimal prime of $I$ has dimension at least two, and $\H^n_I(S),\H^n_{J_t}(S),$ and $\H^{n}_{J}(S)$ have zero-dimensional supports as a consequence of the Hartshorne-Lichtenbaum Vanishing Theorem \cite{HartCD}. Then, $\H^n_I(S),\H^n_{J_t}(S),$ and $\H^{n}_{J}(S)$ are a finite direct sum of the injective  hull of the residue filed $E_S(\k).$
By the Second Vanishing Theorem for local cohomology, we have that $\H^{n}_{J_t}(S)=0.$
In addition, $\H^{n-1}_J(S)=E_S(\k)^{\oplus t-2}$ by induction hypothesis.
Furthermore, $\sqrt{J+J_t}=\m$ by construction of $J_i.$
Then the previous long exact sequence becomes
\[
0 \to E_S(\k)^{\oplus t-2}\to \H^{n-1}_{I}(S)\to E_S(\k)\to 0.
\]
Thus, $\H^{n-1}_I(S)=E_S(\k)^{\oplus t-1}$.
Hence, $\lambda_{0,1}(X)=\dim_\k\Hom_S(\k,\H^n_I(S))=t-1.$
\end{proof}

We now extend the previous result to any projective variety over a field of characteristic $p>0$.
\begin{thm}\label{ThmHighest}
Let $X$ be a projective variety over a field $\k$ of characteristic $p>0$. Let $Y$ denote the union of the irreducible components of $X\times_\k \overline{\k}$ of dimension at least $1$. Let $t$ denote the number of connected components of $Y$.
Then, $\lambda_{0,1}(X)=t-1.$
\end{thm}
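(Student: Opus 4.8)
The plan is to reduce Theorem~\ref{ThmHighest} to Lemma~\ref{LemmaNumberComp} by two successive reductions: first to an algebraically closed base field, and then to the case where every irreducible component of $X$ has dimension at least $1$. Throughout, I would keep in mind that $\lambda_{0,1}(X)$ is by definition $\dim_\k \Hom_S(\k,\H^n_I(S))$ for a presentation $X = \Proj(S/I)$ with $S = \k[x_0,\ldots,x_n]$, and that this is also the $\k$-dimension of the socle of $\H^n_I(S)$.

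First I would handle the base change $\k \to \overline{\k}$. Writing $\overline{S} = S \otimes_\k \overline{\k}$, $\overline{I} = I\overline{S}$, flat base change for local cohomology gives $\H^n_{\overline{I}}(\overline{S}) \cong \H^n_I(S) \otimes_\k \overline{\k}$, and since $\overline{\k}$ is a faithfully flat field extension of $\k$, the socle dimension is preserved: $\dim_{\overline{\k}} \Hom_{\overline{S}}(\overline{\k}, \H^n_{\overline{I}}(\overline{S})) = \dim_\k \Hom_S(\k, \H^n_I(S))$. Hence $\lambda_{0,1}(X) = \lambda_{0,1}(X \times_\k \overline{\k})$, and it suffices to prove the statement when $\k$ is algebraically closed; in that case $X \times_\k \overline{\k} = X$ and $Y$ is simply the union of the $\geq 1$-dimensional irreducible components of $X$.

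Next I would reduce to the case of Lemma~\ref{LemmaNumberComp}, where all components have dimension $\geq 1$. The subtlety is that $X$ may have isolated points (zero-dimensional components) and also connected components that are single points. Let $Y \subseteq X$ be the closed subscheme that is the union of the irreducible components of dimension $\geq 1$, defined by some homogeneous ideal $J \supseteq I$, so $Y = \Proj(S/J)$. I claim $\H^n_I(S) \cong \H^n_J(S)$. Indeed, by the Hartshorne--Lichtenbaum Vanishing Theorem $\H^n_P(S) = 0$ for any prime $P$ of dimension $\geq 2$, while $\H^n_I(S)$ and $\H^n_J(S)$ are supported only at $\m$; using the Mayer--Vietoris sequence to decompose $I$ (up to radical) as the intersection of $J$ with the ideals of the zero-dimensional components, and noting that each zero-dimensional component $Z$ has $\dim S/I_Z = 1$ so that $\H^n_{I_Z}(S) = 0$ and $\H^{n-1}_{I_Z}(S)$ is the only possibly nonzero top cohomology, one checks that passing from $I$ to $J$ does not change $\H^n$. (One must be a little careful: the intersection of $J$ with the zero-dimensional locus has support only at $\m$, so the relevant Mayer--Vietoris correction terms all vanish in degree $n$.) Therefore $\lambda_{0,1}(X) = \dim_\k \Hom_S(\k, \H^n_J(S)) = \lambda_{0,1}(Y)$. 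Now $Y$ is a projective variety over the algebraically closed field $\k$ all of whose components have dimension $\geq 1$, so Lemma~\ref{LemmaNumberComp} applies and gives $\lambda_{0,1}(Y) = t - 1$, where $t$ is the number of connected components of $Y$. Combining the two reductions yields the claim.

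The main obstacle I anticipate is the bookkeeping in the second reduction: making the Mayer--Vietoris comparison between $I$ and $J$ fully rigorous requires keeping track of which cohomological degrees can be nonzero for the zero-dimensional locus, and confirming that the ``overlap'' ideal $J + I_Z$ (where $I_Z$ cuts out the zero-dimensional components) is $\m$-primary so its contribution only affects degrees $\geq n+1$, hence cannot interfere with $\H^n$. Everything else — the base-change step and the final appeal to Lemma~\ref{LemmaNumberComp} — is routine once this comparison is in place.
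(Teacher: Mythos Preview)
Your reduction to an algebraically closed field is fine and matches the paper. The second reduction, however, contains a genuine error in the cohomological bookkeeping.

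With $S=\k[x_0,\ldots,x_n]$ (so $\dim S=n+1$) and $I_Z=\q_1\cap\cdots\cap\q_r$ the ideal of the zero-dimensional components, each $\q_i$ has height $n$, not $n+1$. Hence $\H^n_{I_Z}(S)\neq 0$; in fact $\H^j_{I_Z}(S)=0$ for $j\neq n$ and $\H^n_{I_Z}(S)$ is supported on the lines $V(\q_i)$. Your assertion that ``$\dim S/I_Z=1$ so $\H^n_{I_Z}(S)=0$'' confuses the Hartshorne--Lichtenbaum index $n+1$ with $n$. Likewise, the overlap ideal $J+I_Z$ is indeed $\m$-primary, but then $\H^{n+1}_{J+I_Z}(S)\cong E_S(\k)$, and this term \emph{does} interfere with $\H^n_I$ through the connecting map. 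The Mayer--Vietoris sequence actually yields
\[
0\longrightarrow \H^n_J(S)\oplus \H^n_{I_Z}(S)\longrightarrow \H^n_I(S)\longrightarrow E_S(\k)\longrightarrow 0,
\]
so $\H^n_I(S)\cong \H^n_J(S)$ is simply false in general. To salvage the socle comparison you would need both $\Hom_S(\k,\H^n_{I_Z}(S))=0$ (true, but requiring an argument such as the degenerate spectral sequence $\H^i_\m\H^j_{I_Z}(S)\Rightarrow \H^{i+j}_\m(S)$) and injectivity of the connecting map $\k=\Hom_S(\k,E_S(\k))\to \Ext^1_S(\k,\H^n_{I_Z}(S))$, which you do not address.

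The paper sidesteps this entirely. Instead of Mayer--Vietoris, it chooses a homogeneous $f\in J\setminus\bigcup_i\q_i$, observes that $\sqrt{I+(f)}=J$, and uses the long exact sequence comparing $\H^\bullet_J(T)$, $\H^\bullet_I(T)$, and $\H^\bullet_I(T_f)$ for $T=\widehat{S_\m}$. Since $IT_f=I_ZT_f$ is zero-dimensional in the $n$-dimensional regular ring $T_f$, one gets
\[
0\longrightarrow \H^n_J(T)\longrightarrow \H^n_I(T)\longrightarrow \H^n_I(T_f)\longrightarrow 0,
\]
and $\Hom_T(\k,\H^n_I(T_f))=0$ automatically because $f\in\m$ acts invertibly on any $T_f$-module. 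This gives $\Hom(\k,\H^n_I)\cong\Hom(\k,\H^n_J)$ with no further analysis.
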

\begin{proof}
Given Lemma \ref{LemmaNumberComp}, we may assume that $X\neq Y$. Let $S=\k[x_1,\ldots,x_n]$ and $I \subseteq \m=(x_0,\ldots,x_n)$ be such that $X=\Proj(S/I)$. Let $\overline{X}=X\times_\k \overline{\k}.$ Then, $\overline{X}=\Proj(S/I\otimes_\k \overline{\k})$. 
Lyubeznik numbers are not affected by field extensions, hence we may assume with no loss of generality that $\k=\overline{\k}$. Let $\p_1,\ldots,\p_s$ be the minimal primes of $I$ of dimension at least $2$, and $\q_1,\ldots,\q_r$ be the minimal primes of $I$ of dimension $1$. Let $J_1=\q_1\cap \ldots\cap \q_r$ and $J_2=\p_1\cap\ldots\cap\p_s$. Then, $I=J_1\cap J_2$. There exists a homogeneous polynomial  $f\in J_2\setminus \bigcup \{\p\in \Ass_S S/I\;|\; \dim R/\p=1\}$ by prime avoidance.

We claim that $\sqrt{I+fS}=J_2$. 
Let $Q$ be a minimal prime ideal of $\sqrt{I+fS}$. 
Since $\sqrt{I+fS}\subseteq J_2,$ we have $\dim R/\sqrt{I+fS}\gs 2$ and $Q\neq \m.$
We have that $I\subseteq Q.$ Then, $Q$ must contain a minimal prime of $I$. We note that $Q$ does not contain $\q_i;$ otherwise, $\m=\sqrt{\q_i+fS}\subseteq Q$. 

Let $T$ denote $\widehat{S_\m}.$
We have a long exact sequence given by
$$
\ldots\to \H^{n-1}_{J_2}(T)\to \H^{n-1}_I(T)\to \H^{n-1}_I(T_f)\to \H^{n}_{J_2}(T)\to \H^{n}_I(T)\to \H^{n}_I(T_f)\to 0,
$$
where the sequence stops at $n$ by the Hartshorne-Lichtenbaum Vanishing Theorem, because $\dim(T)=n+1$. Then, $T_f$ is a regular ring of dimension $n-1.$ We note that $\dim T_f/IT_f=\dim T_f/J_1T_f=0$. Then,  $\H^{n-1}_I(T_f)=0.$
We get a short exact sequence
$$
0\to \H^{n}_{J_2}(T)\to \H^{n}_I(T)\to \H^{n}_I(T_f)\to 0.
$$
This induces a long exact sequence: 
$$
0\to \Ext^0_S(\k,\H^{n}_{J_2}(T))\to\Ext^0_S(\k, \H^{n}_I(T))\to \Ext^0_S(\k,\H^{n}_I(T_f))\to \ldots.
$$
Since $\Ext^0_S(\k,\H^{n}_I(T_f))=0,$  we have that $\Ext^0_S(\k,\H^{n}_{J_2}(T))\cong\Ext^0_S(\k, \H^{n}_I(T))$.
Then, 
\begin{align*}
\lambda_{0,1}(X)
&=\dim_\k \Ext^0_S(\k,\H^{n}_{I}(S))\\
&=\dim_\k \Ext^0_S(\k,\H^{n}_{I}(T))\\
&=\dim_\k \Ext^0_S(\k, \H^{n}_{J_2}(T))\\
&=\dim_\k \Ext^0_S(\k, \H^{n}_{J_2}(S))\\
&= \lambda_{0,1}(Y)\hbox{ because }Y=\Proj(R/J_2);\\
&=t-1\hbox{ by Lemma \ref{LemmaNumberComp}}.
\end{align*}
\end{proof}

\subsection{Other Lyubeznik numbers}
In this subsection, we relate the first row in the Lyubeznik table of $X$ with the sheaf cohomology of the variety $X$. 

\begin{thm}\label{ThmSheaf}
Let $X$ be globally $F$-split, Cohen-Macaulay and equidimensional projective variety. Then
\[
\ds \lambda_{0,j}(X) = \dim_\k\left(\H^j(X,\OO_X)\right)
\]
for all $2 \ls j \ls n$.
\end{thm}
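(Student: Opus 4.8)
The plan is to compute $\lambda_{0,j}(X)$ by passing to a graded local setting and then relate the relevant local cohomology of the affine cone to the sheaf cohomology of $X$. First I would reduce to the case $\k = \overline{\k}$, since Lyubeznik numbers of projective varieties are unaffected by base field extension (as used already in Theorem~\ref{ThmHighest}), and global $F$-splitting, Cohen-Macaulayness, and equidimensionality all ascend to $X\times_\k\overline{\k}$. Write $X = \Proj(R)$ with $R = S/I$, $S = \k[x_0,\ldots,x_n]$, $\m = (x_0,\ldots,x_n)$, and let $d = \dim X = \dim R - 1$. Since $X$ is globally $F$-split, the cone $R$ is $F$-split; moreover $R$ is Cohen-Macaulay and equidimensional of dimension $d+1$. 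By definition $\lambda_{0,j}(X) = \dim_\k \Ext^0_S(\k, \H^{n-j}_I(S))$, which by local duality is the socle dimension of the Matlis dual, i.e.\ $\lambda_{0,j}(X) = \dim_\k\left( \H^{j}_\m(R)/\m\, \H^j_\m(R)\right)^\vee$ — more precisely $\lambda_{0,j}(X) = \dim_\k \Hom_S(\k, \H^{n-j}_I(S))$, and via the isomorphism $\H^{n-j}_I(S) \cong \left(\H^j_\m(R)\right)^\vee$ (graded Matlis duality) this equals $\dim_\k (\H^j_\m(R) \otimes_R \k)$, the dimension of the top of $\H^j_\m(R)$ as an $R$-module.

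Next I would bring in the standard exact sequence relating local cohomology of the cone to sheaf cohomology on $X$: for $1 \leq j \leq d$ there is a graded isomorphism $\H^{j+1}_\m(R) \cong \bigoplus_{m\in\ZZ} \H^j(X,\OO_X(m))$, and in the range $2 \leq j \leq d$ we want the contribution from degree zero (and to understand the $R$-module structure, which is governed by multiplication by the $x_i$, i.e.\ the maps $\H^j(X,\OO_X(m)) \to \H^j(X,\OO_X(m+1))$). The key point to exploit is global $F$-splitting: the Frobenius on $R$ splits, hence by Lemma~\ref{generalization Linquan} each $\H^j_\m(R)$ is anti-nilpotent as an $R\langle F\rangle$-module, and the Frobenius action $F\colon \H^j_\m(R) \to \H^j_\m(R)$ is injective in every degree. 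I would use this to show that the socle (equivalently, the top, after Matlis duality) of $\H^j_\m(R)$ is concentrated in degree zero: the Frobenius action sends the degree-$m$ part into the degree-$mp$ part, and injectivity of $F$ together with finiteness of each graded piece forces the nonzero socle elements to sit in degrees fixed under $m \mapsto mp$, namely degree $0$ (and possibly $m$ with $\H^j_\m(R)_m$ of large negative degree, but Grothendieck vanishing and the a-invariant bounds rule those out in the stated range, using Cohen-Macaulayness). This identifies $\lambda_{0,j}(X)$ with $\dim_\k \left(\H^j_\m(R)\right)_0 = \dim_\k \H^{j-1}(X,\OO_X)$ — here I must be careful with the index shift, matching the Mayer-Vietoris/cone convention so that the final answer reads $\dim_\k \H^j(X,\OO_X)$ as stated.

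The main obstacle I anticipate is pinning down precisely the $R$-module structure (not just the graded vector space structure) of $\H^{j}_\m(R)$ and showing its top is exactly the degree-zero piece $\H^{j-1}(X,\OO_X)$ — this requires combining the anti-nilpotence from global $F$-splitting with the Cohen-Macaulay and equidimensional hypotheses to control both ends of the grading, and correctly identifying which $\Ext$ over $S$ computes the socle versus the top. I expect the argument to use the graded local duality theorem quoted above, the description $\H^\bullet_\m(R) = \bigoplus_m \H^{\bullet-1}(X,\OO_X(m))$ for $\bullet \geq 2$, and Remark~\ref{RemFrobInjSurj}-style reasoning (injective Frobenius on a finite-dimensional $\k$-vector space is bijective) applied degree-by-degree, together with the vanishing $\H^j(X,\OO_X(m)) = 0$ for $m \ll 0$ (Serre vanishing / CM) and $m \gg 0$ with $j < d+1$ to force the $F$-stable part into degree zero. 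Once the top of $\H^j_\m(R)$ is identified with $\H^{j-1}(X,\OO_X)$ concentrated in degree zero and shown to be annihilated by $\m$, taking Matlis duals and reading off socle dimensions yields $\lambda_{0,j}(X) = \dim_\k \H^j(X,\OO_X)$ for $2\leq j\leq d$, completing the proof; the upper bound $j \leq n$ in the statement is harmless since $\H^j(X,\OO_X) = 0$ for $j > d$.
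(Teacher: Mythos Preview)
There is a genuine gap in your proposal. The assertion that $\H^{n-j}_I(S)\cong(\H^j_\m(R))^\vee$ ``by graded Matlis duality'' is incorrect: graded local duality identifies $(\H^j_\m(R))^\vee$ with $\Ext^{n+1-j}_S(R,\omega_S)$, a finitely generated $S$-module, whereas $\H^{n+1-j}_I(S)$ is typically not finitely generated and is not the Matlis dual of any $\H^\bullet_\m(R)$. Passing from the local cohomology module $\H^{n+1-j}_I(S)$ (which is what actually appears in the definition of $\lambda_{0,j}$) to the Ext module (which is what local duality relates to $\H^j_\m(R)$) is precisely the nontrivial step. The paper handles it by combining two inequalities on socle dimensions: the injection $\Ext^{n+1-j}_S(R,S)\hookrightarrow\H^{n+1-j}_I(S)$, valid because $R$ is $F$-pure (the Ext module is a root of the local cohomology, by \cite{AnuragUliPure}), gives $\dim_\k\Hom_S(\k,\Ext^{n+1-j}_S(R,S))\leq\lambda_{0,j}(X)$ after applying $\Hom_S(\k,-)$; the reverse inequality is the Huneke--Sharp bound \cite[Theorem 2.1]{HunekeSharp}. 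Without this bridge your computation of ``the top of $\H^j_\m(R)$'' has no a priori connection to $\lambda_{0,j}(X)$.

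Your second idea, using injectivity of Frobenius together with the degree-multiplying property $F\!\left(\H^j_\m(R)_m\right)\subseteq\H^j_\m(R)_{pm}$ to force $\H^j_\m(R)$ into degree~$0$, is correct in spirit and is essentially the content of the result the paper simply cites (\cite[Proposition 4.1]{Schenzel}, \cite[Corollary 1.3]{MaBuchsbaum}): for $R$ $F$-pure and Cohen--Macaulay on the punctured spectrum one has $\H^j_\m(R)=\H^j_\m(R)_0$ for $j<\dim R$. Note, however, that the argument requires first knowing that $\H^j_\m(R)$ has finite length, which is where the Cohen--Macaulay hypothesis on $X$ enters (it makes $R_\p$ Cohen--Macaulay for all homogeneous $\p\neq\m$); Serre vanishing alone, as you suggest, does not control the negative degrees. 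Once $\H^j_\m(R)$ is a finite-dimensional $\k$-vector space it is killed by $\m$, hence so is its Matlis dual $\Ext^{n+1-j}_S(R,S)$, and the socle of the latter equals the whole module; the index-shift worry you raise then dissolves, because one is simply reading off $\dim_\k\H^j_\m(R)_0$.
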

\begin{proof} 
Let $S=\k[x_0,\ldots,x_n]$ and $I \subseteq \m=(x_0,\ldots,x_n)$ be such that $X=\Proj(S/I)$. Let $2 \ls j \ls n$ be an integer, and let $R=S/I$. Since $R$ is $F$-pure, we have an inclusion $\ds \Ext^{n+1-j}_S(R,S) \hookrightarrow \H^{n+1-j}_I(S)$ \cite[Theorem 1.3]{AnuragUliPure} (see also \cite{DDSM}). After applying $\Hom_S(\k,-)$ and looking at $\k$-vector space dimensions, we obtain that 
\[
\ds \dim_\k\left(\Hom_S\left(\k,\Ext^{n+1-j}_S(R,S)\right)\right) \ls \dim_\k\left(\Hom_S\left(\k,\H^{n+1-j}_I(S)\right)\right) = \lambda_{0,j}(X).
\]
The reverse inequality already holds \cite[Theorem 2.1]{HunekeSharp}; therefore, we have equality. By graded local duality, we have that
\[
\ds \left(\Ext^{n+1-j}_S(R,S)\right)^{\vee} \cong  \H^j_\m(R)(n+1),
\]
and the latter is just a graded shift of $\H^j_\m(R)$. Since $R_\p$ is Cohen-Macaulay for every homogeneous prime $\p \ne \m$, and $R$ is $F$-pure, we have that $\H^j_\m(R) = \H^j_\m(R)_0$ \cite[Proposition 4.1]{Schenzel} (see also \cite[Corollary 1.3]{MaBuchsbaum}). In particular, $\H^j_\m(R)$ is a finitely generated $\k$-vector space. Therefore, $\Ext^{n+1-j}_S(R,S) \cong \ck{H^j_\m(R)}$ is also a finitely generated $\k$-vector space, and 
\[
\ds \dim_\k\left(\Hom_S(\k,\Ext^{n+1-j}_S(R,S))\right) = \dim_\k\left(\Ext^{n+1-j}_S(R,S)\right) = \dim_\k\left(\H^j_\m(R)\right).
\]
Putting everything together, we finally obtain that for $2 \ls j \ls n$
\[
\ds \lambda_{0,j}(X) = \dim_\k\left(\H^j_\m(R)_0\right) = \dim_\k\left(\H^j(X,\OO_X)\right).
\]
\end{proof}

In order to be able to obtain all the Lyubeznik numbers of $X$, we adapt to projective varieties a result known for local rings \cite{B-B,BlickleLyu}. We include a proof for the sake of completeness.
\begin{proposition}\label{PropDuality}
Let $X$ be a $d$-dimensional Cohen-Macaulay projective variety over a field $\k$ of characteristic $p$. Then, 
$$\lambda_{d+1,d+1}(X)=\lambda_{0,1}(X)+1$$
and $\lambda_{0,j}(X)=\lambda_{d+2-j,d+1}(X)$ for $2\ls j\ls d.$
\end{proposition}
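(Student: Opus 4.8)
The plan is to run the Grothendieck spectral sequence for the composition $\Gamma_\m\circ\Gamma_I\cong\Gamma_\m$ and to read off the two identities from its near‑collapse. Write $X\subseteq\Proj S$ with $S$ a polynomial ring over $\k$, $R=S/I$, $\m$ the irrelevant maximal ideal, $E=E_S(\k)$, and $N=\dim S$; since the Lyubeznik numbers of a projective variety are insensitive to field extension, I may assume $\k=\overline{\k}$, so $S$ is $F$-finite. Each $\H^q_I(S)$ is an $F$-finite $F$-module, so each $\H^p_\m(\H^q_I(S))$ is an $F$-finite $F$-module supported at $\m$, hence a finite direct sum of copies of $E$; moreover the spectral sequence $\Ext^a_S(\k,\H^b_\m(-))\Rightarrow\Ext^{a+b}_S(\k,-)$ degenerates on these injective modules, so $\dim_\k\Hom_S(\k,\H^p_\m(\H^q_I(S)))=\dim_\k\Ext^p_S(\k,\H^q_I(S))$. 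Thus the number of copies of $E$ in $E_2^{p,q}:=\H^p_\m(\H^q_I(S))$ equals the Lyubeznik number $\lambda_{p,N-q}(X)$; in particular the row $q=c$, with $c=\Ht(I)=N-d-1$, records the numbers $\lambda_{p,d+1}(X)$, and the $p=0$ entries $E_2^{0,c+k}$ record $\lambda_{0,d+1-k}(X)$. The abutment is $\H^{p+q}_\m(S)$, which is $E$ for $p+q=N$ and $0$ otherwise.

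The Cohen--Macaulay hypothesis enters through the claim that $\H^q_I(S)$ is supported only at $\m$ whenever $q\neq c$. For $q<c$ this holds because $\operatorname{grade}(I)=c$. For $q>c$, note that $R$ is equidimensional and catenary, so $\Ht(IS_\p)=c$ for every homogeneous prime $\p\neq\m$, while $R_\p$ is Cohen--Macaulay (because $X$ is); hence $\H^q_I(S)_\p\cong\H^q_{IS_\p}(S_\p)=0$ for $q\neq c$ by the Peskine--Szpiro vanishing theorem in characteristic $p$. It follows that $E_2^{p,q}=0$ for $q<c$; that for $q>c$ the module $\H^q_I(S)$ is $\m$-torsion, so $E_2^{0,q}=\H^q_I(S)$ and $E_2^{p,q}=0$ for $p>0$; and that the only potentially full row is $q=c$, where $E_2^{p,c}$ can be nonzero for $0\le p\le d+1$ (since $\H^c_I(S)$ has support of dimension $d+1$). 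Also $\cd_I(S)\le N-1$ as $X\neq\emptyset$, so the column $p=0$ reaches total degree at most $N-1$; hence the nonzero part of $E_2$ lives on a ``hook'' consisting of the row $q=c$ and the column $p=0$.

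On this hook the only possible differentials are $d_s\colon E_s^{0,c+s-1}\to E_s^{s,c}$ for $s\ge2$, and each of these terms is unaffected by earlier differentials, so $d_s$ is an honest $S$-linear map between finite sums of copies of $E$ and therefore splits whenever it is injective or surjective. I would then compare $E_\infty$ with the abutment: all $E_\infty^{p,q}$ with $p+q\neq N$ vanish, and in total degree $N$ exactly one copy of $E$ survives, necessarily at $(p,q)=(d+1,c)$, the unique hook position of that total degree. Vanishing of $E_\infty^{0,c+s-1}=\ker d_s$ forces every relevant $d_s$ to be injective; vanishing of $E_\infty^{s,c}=\operatorname{coker}d_s$ for $2\le s\le d$ forces those $d_s$ to be isomorphisms; and $E_\infty^{d+1,c}=\operatorname{coker}(d_{d+1})$ being a single copy of $E$ forces $d_{d+1}$ to be injective with one‑dimensional cokernel. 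Counting copies of $E$ on both sides of these maps and translating back (via $E_2^{p,c}=E^{\oplus\lambda_{p,d+1}(X)}$ and $E_2^{0,c+k}=E^{\oplus\lambda_{0,d+1-k}(X)}$), the isomorphisms $d_s$ with $s=d+2-j$ yield $\lambda_{0,j}(X)=\lambda_{d+2-j,d+1}(X)$ for $2\le j\le d$, and $d_{d+1}$ yields $\lambda_{d+1,d+1}(X)=\lambda_{0,1}(X)+1$.

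The delicate point is the bookkeeping: pinning down how the second index of the Lyubeznik numbers corresponds to the cohomological degree $q=c+k$, checking that the interior range $2\le j\le d$ and the three boundary spots ($j=0,1$ and $j=d+1$) come out precisely as in the statement, and making sure the surviving copy of $E$ genuinely sits at $(d+1,c)$ — which is why one needs $\H^c_I(S)$ to have support of dimension exactly $d+1$. A subsidiary but essential ingredient is that every $E_2$-term is a \emph{finite} sum of copies of $E$ with the asserted multiplicity, which relies on Lyubeznik's structure theory for $F$-finite $F$-modules together with the characteristic‑$p$ Peskine--Szpiro vanishing applied to the Cohen--Macaulay local rings $R_\p$.
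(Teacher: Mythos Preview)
Your proposal is correct and follows essentially the same approach as the paper: both run the Grothendieck spectral sequence $\H^p_\m\H^q_I(S)\Rightarrow\H^{p+q}_\m(S)$, use the Cohen--Macaulay hypothesis on $X$ together with Peskine--Szpiro vanishing to see that $\H^q_I(S)$ is supported only at $\m$ for $q\neq c=\Ht(I)$, and then read off the two identities from the resulting hook-shaped $E_2$-page. The paper compresses your hook analysis into the bare statements $\H^{d+1}_\m\H^{c}_I(S)\cong\H^0_\m\H^{n-1}_I(S)\oplus E_S(\k)$ and $\H^0_\m\H^{c+j}_I(S)\cong\H^j_\m\H^{c}_I(S)$, and quotes \cite[Lemma~1.4]{LyuDMod} for the identification $\lambda_{i,j}=\dim_\k\Hom_S(\k,\H^i_\m\H^{n-j}_I(S))$, whereas you derive the latter directly from the $F$-module structure and spell out the differential-by-differential bookkeeping; but the underlying argument is the same.
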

\begin{proof}
Let $S=\k[x_1,\ldots,x_n]$ and $I \subseteq \m=(x_0,\ldots,x_n)$ be such that $X=\Proj(S/I)$. Let $R=S/I$. Since $X$ is Cohen-Macaulay, $R_Q$ is a Cohen-Macaulay ring for every homogeneous prime ideal $Q$ such that $Q\neq \m$. We note that $\H^j_I(S_Q)=0$ for $i\neq n-d-1$ \cite[Proposition 4.1]{P-S}.
Then, $\H^j_I(S_Q)=0$ is supported only at the maximal homogeneous ideal for $i\neq n-d-1,$ and so, $\H^i_\m \H^j_I(S)=0$ for $i>0.$
From Grothendieck spectral sequence $\H^i_\m \H^j_I(S) \Rightarrow \H^{i+j}_\m(S)$, we have that 
 $\H^{d+1}_\m \H^{n-d-1}_I(S)\cong \H^0_\m \H^{n-1}_I(S)\oplus E_S(\k)$
and $\H^0_\m \H^{n-d-2+j}_I(S)\cong \H^j_\m \H^{n-d-1}_I(S)$ for $2\ls j\ls d.$
The claim follows from the fact that
$\lambda_{i,j}(R)=\dim_\k \Hom_S(\k,\H^i_\m \H^{n-j}_I(S))$ \cite[Lemma 1.4]{LyuDMod}.
\end{proof}

\section*{Acknowledgments}

We thank Jack Jeffries for suggesting that we look at Example \ref{Bertin}. We also thank Moty Katzman for pointing out a mistake in Example \ref{AnuragUli} of a previous version of this article.

\bibliographystyle{alpha}
\bibliography{References}
\end{document}